\theoremstyle{remark}
\def\R{\mathbb{R}}
\def\cN{\mathcal{N}}
\def\cA{\mathcal{A}}
\def\cK{\mathcal{K}}
\def\cT{\mathcal{T}}
\def\cE{\mathcal{E}}
\def\p{\partial}
\def\[{\partial}
\def\O{\Omega}
\def\ssT{{\scriptscriptstyle T}}
\def\HT{{H^2(\O,\cT_h)}}
\def\mean#1{\left\{\hskip -5pt\left\{#1\right\}\hskip -5pt\right\}}
\def\jump#1{\left[\hskip -3.5pt\left[#1\right]\hskip -3.5pt\right]}
\def\smean#1{\{\hskip -3pt\{#1\}\hskip -3pt\}}
\def\sjump#1{[\hskip -1.5pt[#1]\hskip -1.5pt]}
\def\jumptwo{\jump{\frac{\p^2 u_h}{\p n^2}}}
\def\b#1{\boldsymbol{#1}}
\def\norm #1{{\left\vert\kern-0.25ex\left\vert\kern-0.25ex\left\vert #1 
    \right\vert\kern-0.25ex\right\vert\kern-0.25ex\right\vert}}
\setlist[itemize]{wide=0pt,leftmargin=15pt,align=left}
\theoremstyle{plain}
\newtheorem{theorem}{Theorem}[section]
\newtheorem{lemma}[theorem]{Lemma}
\theoremstyle{remark}
\newtheorem{remark}[theorem]{Remark}
\begin{document}
\allowdisplaybreaks[4]
\numberwithin{figure}{section}
\numberwithin{table}{section}
 \numberwithin{equation}{section}
%
\title[  Quadratic Discontinuous Galerkin methods for Unilateral Contact Problem]
 {Quadratic discontinuous Galerkin finite element methods for the unilateral contact problem}
\author{Kamana Porwal}\thanks{The second author's work is supported  by CSIR Extramural Research Grant}
\address{Department of Mathematics, Indian Institute of Technology Delhi - 110016}
\email{kamana@maths.iitd.ac.in}
\author{Tanvi Wadhawan}\thanks{}
\address{Department of Mathematics, Indian Institute of Technology Delhi - 110016}
\email{maz188452@iitd.ac.in}
\date{}

\begin{abstract}
In this article,  we employ discontinuous Galerkin (DG) methods for the finite element approximation of the  frictionless unilateral contact problem using quadratic finite elements over simplicial triangulation.  We first establish an optimal \textit{a priori} error estimates under the appropriate regularity assumption on the exact solution $\b{u}$.  Further,  we analyze \textit{a posteriori} error estimates in the DG norm wherein,  the reliability and efficiency of the proposed \textit{a posteriori} 
error estimator is addressed.  The suitable construction of discrete Lagrange multiplier $\b{\lambda_h}$  and some intermediate operators  play a key role in developing  \textit{a posteriori} error analysis.  Numerical results presented on uniform and adaptive meshes illustrate and confirm the theoretical findings.  
\end{abstract}


\keywords{ Signorini problem;  Quadratic finite elements; A posteriori error analysis; Variational inequalities; Discontinuous Galerkin methods}
%
%
\maketitle
\allowdisplaybreaks
\def\R{\mathbb{R}}
\def\cA{\mathcal{A}}
\def\cK{\mathcal{K}}
\def\cN{\mathcal{N}}
\def\p{\partial}
\def\O{\Omega}
\def\bbP{\mathbb{P}}
\def\cV{\mathcal{V}}
\def\cM{\mathcal{M}}
\def\cT{\mathcal{T}}
\def\cE{\mathcal{E}}
\def\bF{\mathbb{F}}
\def\bC{\mathbb{C}}
\def\bN{\mathbb{N}}
\def\ssT{{\scriptscriptstyle T}}
\def\HT{{H^2(\O,\cT_h)}}
\def\mean#1{\left\{\hskip -5pt\left\{#1\right\}\hskip -5pt\right\}}
\def\jump#1{\left[\hskip -3.5pt\left[#1\right]\hskip -3.5pt\right]}
\def\smean#1{\{\hskip -3pt\{#1\}\hskip -3pt\}}
\def\sjump#1{[\hskip -1.5pt[#1]\hskip -1.5pt]}
\def\jumptwo{\jump{\frac{\p^2 u_h}{\p n^2}}}
\section{Introduction} \label{sec:intro}
\noindent
The numerical analysis of contact problems arising in the various physical phenomena plays a substantial role in understanding processes and effects in natural sciences. From mathematical point of view, contact problems are modeled as  variational inequalities which play an essential role in solving class of various non-linear boundary value problems arising in the physical fields.  Much of the mathematical basis of modeling and detailed understanding of contact problems within the framework of variational inequality can be found in the book by  Kikuchi and Oden \cite{KO:1988:CPBook}.  Among the contact problems,  the unilateral contact represented by the frictionless Signorini model simulates the contact between a linear elastic body and a rigid foundation.  The Signorini problem is basically studied as a prototype for elliptic variational inequality (EVI) of the first kind.   In the underlying variational inequality, the non-linearity condition in the weak formulation is incorporated in a  closed and convex set on which the formulation is posed. 
This contact problem was formulated by Signorini (see \cite{Signorini}) followed by that Fichera in \cite{Fichera} carried out an extensive study of Signorini problem in the context of EVI's.  Subsequently, many researchers have done plenty of work in modeling the contact problems. 
The study of \textit{a priori} error analysis for unilateral contact problem using conforming linear finite elements can be found in \cite{AH:2009:VI, Glowinski:2008:VI,KO:1988:CPBook}.  Resorting to the higher order finite elements provide more accurate computed solution \cite{Ciarlet:1978:FEM}, compared to linear elements.  Contrarily,  less literature is available for the same.  The article \cite{apriori:2001:quad} exploits the two non-conforming quadratic approximation to the Signorini problem and derive \textit{a priori} error estimates.  In order to quantify the discretization errors, \textit{a posteriori} error estimators act as an indispensable tool. In article \cite{Hild:2005:CPE},  the authors constructed a positivity preserving interpolation operator to derive a reliable and efficient residual based \textit{a posteriori} error estimators for Signorini problem using linear finite elements.  Further,  in the article \cite{Weiss:2009},  the \textit{a posteriori} error analysis is carried out without using the positivity preserving interpolation operator. A residual based a posteriori error estimator of conforming linear finite element method for the Signorini problem is developed in \cite{Krause:2015:apost_Sig} using a suitable construction of the quasi discrete contact force density. There is hardly any work available in the literature towards the a posteriori error analysis of quadratic finite element methods for the Signorini problem.\\
\par \noindent
Unlike the standard finite element method, the discontinuous Galerkin methods introduced by Reed and Hill \cite{RH:1973:DG} work with the space of trial functions which are only piecewise continuous. These methods are well known for their flexibility for hp adaptivity.  The discontinuous property permits the usage of general meshes with hanging nodes.  In addition to this,  its property to incorporate the non-homogeneous boundary condition in the weak formulation greatly increases the accuracy and robustness of any boundary condition implementation.
Consequently,  DG methods have been applied to study several linear and non-linear partial differential equations.  We refer to \cite{ Cockburn:2000:DGM,  HW:2007:Book,  PAE:2012:DGbook,Riviere:2008:DGBook} and the references therein for the comprehensive study of DG methods.
\par \noindent
 In the past decades,  DG methods are extensively used to solve variational inequalities.  In the article \cite{WHC:2010:DGVI},  the ideas discussed in \cite{ABCM:2002:UnifiedDG} were extended to deduce an optimal \textit{a priori} error estimates of DG methods for obstacle and simplified Signorini problem.  Followed by that in article \cite{WHC:2011:DGSP} and \cite{WHC:2014:DGQSP},  DG formulations for Signorini problem and quasi static contact problem are derived using linear elements and optimal \textit{a priori} error estimates are established for these methods.  The articles  \cite{TG:2014:VIDG1, TG:2014:VIDG, WHE:2015:ApostDG, GGP:2021:DGVI} addressed  \textit{a posteriori} error control for obstacle problem using DG methods.  The study of \textit{a posteriori} error estimates on adaptive mesh for Signorini problem using discontinuous linear elements was dealt in \cite{TG:2016:VIDG1}.  Therein,  the authors carried out the analysis by defining the continuous Lagrange multiplier as a functional on $\b{H^{\frac{1}{2}}}(\Gamma_C)$.  In comparison to \cite{TG:2016:VIDG1},  M.  Walloth in the article \cite{Walloth:2019:Dg} discussed the  \textit{a posteriori} error analysis of DG methods for Signorini problem which relies on the construction of quasi discrete contact force density.  In this article, we provide a rigorous analysis for the DG discretization with quadratic polynomials to Signorini  problem, therein both \textit{a priori} and \textit{a posteriori} error analysis are discussed.  We followed the approach shown in the article \cite{TG:2016:VIDG1} and defined continuous Lagrange multiplier on $\b{H^{-\frac{1}{2}}}(\Gamma_C)$.  The interpolation operator $\b{\pi_h}$ and $\b{\beta_h}$ (defined in equation \eqref{pi_h} and equation \eqref{32},  respectively) is suitably constructed to exhibit the desired properties which are  later used to define discrete counter part of Lagrange multiplier.  \\
\par \noindent
The outline of the article is as follows: In Section 1,  we state the classical and variational formulation of the Signorini problem. In addition to this, we define an auxiliary functional on the space $\b{H^{\frac{1}{2}}}(\Gamma_C)$ pertaining to the exact solution and complimentarity conditions on the contact region.  We introduce some prerequisite notations and preliminary results in Section 2.  Therein,  we also define the discrete counterpart of the continuous problem on closed,  convex non-empty subset of a quadratic finite element space,  couple of interpolation operators and discuss several DG formulations.  Section 3 is dedicated to derive an optimal \textit{a priori} error estimates with respect to the regularity of the exact solution, followed by that in Section 4 we introduce the discrete counterpart $\b{\lambda_h}$ of Lagrange multiplier $\b{\lambda}$ on a suitable space.  Next,  we derive the sign properties of $\b{\lambda_h}$ which play a key role in the subsequent analysis.  In order to deal with the discontinuous finite elements we construct an enriching map which connects DG functions with conforming finite elements and preserves constraints on discrete functions.  Section 5 is devoted to unified \textit{a posteriori} error analysis for the various DG methods wherein,  we discuss the reliablity and efficiency of \textit{a posteriori} error estimator.  Finally in Section 6,  the theoretical results are corroborated by two numerical experiments addressing contact of a linear elastic body with the rigid obstacle.  Therein,  the optimal convergence of the error on uniform mesh and  the convergence behavior of error estimator over adaptive mesh are demonstrated for two DG methods namely, SIPG and NIPG.

\par
\section{\textbf{Signorini Contact Problem}}
\noindent
Let $\O$ represents a bounded,  polygonal linear elastic body in $\mathbb{R}^2$ with Lipschitz  boundary $\partial \Omega=\Gamma$ which is partitioned into three mutually disjoint,  relatively open sets;  the Dirichlet boundary $\Gamma_D$,  the Neumann boundary $\Gamma_N$ and the potential contact boundary $\Gamma_C$ with $meas(\Gamma_D) > 0$ and $\overline \Gamma_C \subset \Gamma~ \backslash~\overline \Gamma_D$.  
\par
\noindent
 Let $\b{u}:\Omega \rightarrow \mathbb{R}^2$ denote the displacement vector. Then,  the linearized strain tensor 
\begin{align}\label{0.1}
\b{\varepsilon}(\b{u}) := \dfrac{1}{2}\big(\b{\nabla}\b{u} +(\b{ \nabla}\b{u})^{\b{T}}\big),
\end{align}
and stress tensor
\begin{align}\label{0.2}
\b{\sigma}(\b{u}) := \mathcal{C}\b{\varepsilon}(\b{u}),
\end{align}
belong to $\b{\mathbb{S}}^2$ which is  the space of second order real symmetric tensors on $\mathbb{R}^2$ with the inner product $\b{\psi}:\b{\chi} = \psi_{ij}\chi_{ij}$ and the norm $|\b{\psi}|= (\b{\psi}:\b{\psi})^{\frac{1}{2}}~~\forall~\b{\psi},\b{\chi} \in \b{\mathbb{S}}^2$ .   In the relation \eqref{0.2},  $\mathcal{C}$ denotes the bounded,  symmetric and positive definite fourth order elasticity tensor defined on $\O \times \mathbb{S}^2 \longrightarrow \mathbb{S}^2$.  In particular,  for a homogeneous and isotropic elastic body,   the stress  tensor obeys  Hooke's law and is explicitly given by
\begin{align*}
\mathcal{C}\b{\varepsilon}(\b{u}):= 2\mu\b{\varepsilon}(\b{u})+\lambda(\text{tr} \b{\varepsilon}(\b{u}))\textbf{Id},
\end{align*}
where,  $\mu>0$ and $\lambda>0$ denote the Lam$\acute{e}$'s parameter and $ \textbf{Id}$ is an identity matrix of order 2.  
\vspace{0.3 cm}
\par 
\noindent
Throughout this article,  the vector valued functions are identified with the bold symbols whereas the scalar valued functions are written in usual way.  Let $\b{n}$ denotes the outward unit normal vector to $\Gamma$,  then for any vector $\b{v}$,  we decompose  its normal and tangential component as $v_n:= \b{v} \cdot \b{n}$ and $\b{v_{\tau}} := \b{v}- v_n\b{n}$, respectively.  Analogously,  for any tensor valued function $\b{\Psi}$ in $\b{\mathbb{S}}^2$,  we denote  $\Psi_n:= \b{\Psi}\b{n}\cdot\b{n}$ and  $\b{\Psi_{\tau}}:= \b{\Psi}\b{n}-\Psi_n\b{n}$ as its normal and tangential component,  respectively.  Further,  we will require the following decomposition formula
\begin{align}\label{decomp}
(\b{\Psi}\b{n})\cdot \b{v} = \Psi_nv_n +\b{\Psi_{\tau}}\cdot \b{v}_{\tau}.
\end{align}
In the following static contact model problem,  the linear elastic body lying on a rigid foundation is subjected to volume force of density $\b{f}$ in $\Omega$,  surface traction force $\b{g}$ on Neumann boundary together with unilateral contact condition on $\Gamma_C$.  The pointwise formulation of the contact problem is described below in subsection (\ref{strong}). 
\subsection{Strong formulation of Signorini problem:}\label{strong}
 Find the displacement vector $\b{u}: \Omega \rightarrow \mathbb{R}^2$ satisfying the equations \eqref{1.4}-\eqref{1.7},
\begin{align}
    \b{-div} ~~\b{\sigma}(\b{u}) &= \b{f} ~~~~\textit{in}~\Omega,\label{1.4}\\
    \b{u} &= \b{0} ~~~~\textit{on}~\Gamma_D, \label{1.5}\\
    \b{\sigma}(\b{u})\b{n} &= \b{g} ~~~~\textit{on}~\Gamma_N,\label{1.6}
    \end{align}
    \begin{align} \label{1.7}
       \begin{split}
    \left.\begin{aligned}
    u_n&\leq 0,~~ \sigma_n(\b{u})\leq 0 \\
    u_{n} \sigma_n(\b{u})&=0,~~ \b{\sigma_\tau}(\b{u}) =0 
    \end{aligned}\right\}  on~~\Gamma_C,
    \end{split}
    \end{align}
 where $\b{f} \in [L^2(\O)]^2$ and $\b{g} \in [L^2(\Gamma_N)]^2$.  The equilibrium equation in which volume force of density $\b{f}$ acts in $\O$ is described in \eqref{1.4}.  The equation $\eqref{1.5}$ depicts that the displacement field vanishes on Dirichlet boundary,  thus the body is clamped on $\Gamma_D$.  On the contact boundary $\Gamma_C$,  the relation \eqref{1.7} constitutes the non-penetration condition that evokes the contact stresses in the direction of the constraints.  In addition,  the contact of elastic body and deformable foundation is assumed to be frictionless,  henceforth the frictional stresses $\b{\sigma_\tau}(\b{u})$ are assumed to be zero (see Figure \ref{FIg1}).
\begin{figure}[ht!] 
	\begin{center}
		\includegraphics[height=5cm,width=9cm]{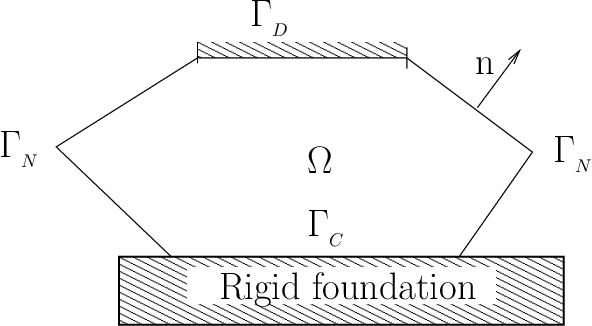}
		\caption{Physical setting of Signorini problem}
		\label{FIg1}
	\end{center}
\end{figure}
 \par
 \noindent
In the subsequent analysis,  we will make a constant use of Sobolev spaces $H^m(\O)$ which is endowed with the following norm $\|{v} \|_{H^m(\O)} := \bigg(\underset{0 \leq |\alpha| \leq m}{\sum} \|\partial^{\alpha} {v}\|^2_{L^2(\O)}\bigg)^{\frac{1}{2}}$ and semi norm  $| {v} |_{H^m(\O)} := \bigg(\underset{|\alpha| = m}{\sum} \|\partial^{\alpha} {v}\|^2_{L^2(\O)}\bigg)^{\frac{1}{2}},$  respectively.  Here,  
 $\alpha= (\alpha_1, \alpha_2)$ represents the multi index in $\mathbb{N}^2$ and the symbol $\partial^{\alpha}$ refers to the partial derivative of $v$ defined as $\partial^{\alpha}v := \frac{\partial^{|\alpha|}v}{\partial x^{\alpha_1} \partial y^{\alpha_2} }$.  Furthermore,  for a non-integer positive number $s$,  we define fractional order Sobolev space $H^{s}(\O) $ as 
\begin{align*}
 H^{s}(\O) :=\bigg \{\b{v}\in H^{\gamma}(\O) : \dfrac{{\lvert{v}(x)−{v}(y)\rvert}}{{\lvert x−y \rvert}^{\theta+1}} \in L^p(\Omega \times \Omega)\bigg \},
\end{align*}
where $s=~\gamma~+~\theta$,   $\gamma$ and $\theta$ denotes the greatest integer and fractional part of $s$,  respectively.   In addition,  for any vector $\b{v}=(v^1,v^2) \in \b{H^{m}}(\O)$~=~$[H^m(\O)]^2$, we define the  product norm as $\|\b{v}\|_{\b{H^m}(\O)}:=\bigg(\sum\limits_{i=1}^2\|{v^i}\|^2_{{H^m(\O)}}\bigg)^{\frac{1}{2}} $ and  semi norm  $|\b{v}|_{\b{H^m}(\O)}:=\bigg(\sum\limits_{i=1}^2|{v^i}|^2_{{H^m(\O)}}\bigg)^{\frac{1}{2}}$.  
\par
\noindent
 In unilateral contact problem,  the contact between two bodies occurs on the part of the boundary  thus the trace operator plays an important role.   We refer to \cite{BScott:2008:FEM,  KO:1988:CPBook} for the detailed understanding of trace operators.  
 Let  the trace of $\b{H^1}(\O)$ functions restricted to boundary $\Gamma_C$ is denoted by $\b{H^{\frac{1}{2}}}(\Gamma_C)$ which is equipped with the norm 
  \begin{equation*}
  \|\b{v}\|_{\b{H^{\frac{1}{2}}}(\Gamma_C) }: =\bigg (\|\b{v}\|^2_{\b{L^2
  }(\Gamma_C)} +  \int_{\Gamma_C}\int_{\Gamma_C} \dfrac{\lvert{\b{v}(x)−\b{v}(y)\rvert^2}}{{\lvert x−y \rvert}^{2}}~dx dy \bigg)^{\frac{1}{2}},~\quad~\b{v}\in {\b{H^{\frac{1}{2}}}(\Gamma_C) }.
 \end{equation*}

 \noindent
Further, let $\b{\gamma_c} : \b{H^1}(\O) \rightarrow \b{H^{\frac{1}{2}}}(\Gamma_C)$ be the trace map.  The continuity and surjectivity of the trace map guarantees existence of continuous right inverse which we denote by $\b{\hat{\gamma}_c}:   \b{H^{\frac{1}{2}}}(\Gamma_C)  \rightarrow  \b{H^1}(\O)$ such that $\b{{\gamma_c}~o~\hat{\gamma}_c}=\text{identity}$.
Denote the space $\b{H^{-\frac{1}{2}}}(\Gamma_C)$ as the dual space of  $\b{H^{\frac{1}{2}}}(\Gamma_C)$ endowed with the norm
\begin{align*}
\|\b{T}\|_{\b{H^{-\frac{1}{2}}}(\Gamma_C)} := \underset{\b{v} ~\in~\b{H^{\frac{1}{2}}}(\Gamma_C),~\b{v}\neq 0}{\text{sup}} \frac{\b{T}\b{(v)}}{~~\|\b{v}\|_{\b{H^{\frac{1}{2}}}(\Gamma_C)}}~\forall~\b{T}\in {\b{H^{-\frac{1}{2}}}(\Gamma_C) },
\end{align*}
and let $\langle \cdot,\cdot \rangle_c$ denotes the duality pairing between the space $\b{H^{-\frac{1}{2}}}(\Gamma_C)$ and $\b{H^{\frac{1}{2}}}(\Gamma_C)$ i.e.  $\langle \b{T},\b{w} \rangle_c := \b{T}(\b{w})~\forall~\b{T} \in \b{H^{-\frac{1}{2}}}(\Gamma_C)$ and $\b{w} \in \b{H^{\frac{1}{2}}}(\Gamma_C)$.  In the further sections,  the conventional symbols are used without any specific explanation.
\subsection{Variational Setting}\label{weak}
In this subsection, we introduce the weak formulation of the Signorini problem which is represented by a variational inequality due to the constraints on the potential contact boundary.  For this purpose,  let us introduce the Hilbert space $\b{{V}}$ for the displacement fields as
\begin{align*}
\b{{V}}:= \{ \b{v} \in \b{H^1}(\O):   \b{v} = \b{0}~ \text{on}~\Gamma_D \},
\end{align*}
 and let $\b{{V}^{'}}$ denotes its topological dual space endowed with the usual norm.  Further,  in order to incorporate the unilateral contact conditions on $\Gamma_C$ we define a non-empty,  closed convex set of admissible displacements as
\begin{align*}
\b{{K}}:= \{ \b{v} \in \b{{V}}:  v_n \leq 0~ \text{on}~\Gamma_C \}.
\end{align*} 
Exploiting the relation $\sigma_n(\b{u})(v_n-u_n)\geq 0~\forall~\b{v} \in \b{{K}} ~ \text{on}~\Gamma_C$ and using integration by parts,  we derive the following variational formulation: find a displacement field $\b{u} \in \b{{K}}$ such that
\begin{align}\label{weak1}
a(\b{u}, ~\b{v}-\b{u}) \geq  L(\b{v}- \b{u})~ \qquad\forall~\b{v} \in \b{{K}},
\end{align} 
where the continuous,  $\b{{V}}$-elliptic bilinear form $a(\cdot,~\cdot)$ and the continuous linear functional $L(\cdot)$ are defined as
\begin{align*}
a(\b{w},\b{v}):= \int_{\O}\b{\sigma}(\b{w}):\b{\varepsilon}(\b{v})~\b{dx} ~~~\quad \forall~\b{w}, \b{v}\in\b{{V}}
\end{align*}
\text{and}
\begin{align*}
~~~~~~L(\b{v}):= \int_{\O}\b{f}\cdot\b{v}~\b{dx} + \int_{\Gamma_N}\b{g} \cdot \b{v}~\b{ds}~~~\quad\forall~\b{v}\in\b{{V}}.
\end{align*}
The existence and uniqueness of the solution of the variational inequality \eqref{weak1} follows from the well known theorem of Lions and Stampacchia \cite {AH:2009:VI, KS:2000:VI}.   Following the approach of the article \cite{TG:2016:VIDG1},  we define Lagrange multiplier $\b{\lambda} \in  \b{H^{-\frac{1}{2}}}(\Gamma_C)$ which reflects the residual of $\b{u}$ with respect to continuous variational formulation  \eqref{weak1}. 
\begin{lemma}
Let $\b{u}$ be the solution of the variational formulation  \eqref{weak1}.  Then,  there exists a continuous linear map 
\begin{align}\label{lagrange}
\langle \b{\lambda},  \b{v} \rangle_c:= L(\b{\hat{\gamma}_c}(\b{v})) -  a(\b{u}, \b{\hat{\gamma}_c}(\b{v}))~ \quad\forall~\b{v}~\in~\b{H^{\frac{1}{2}}}(\Gamma_C). ~~~~~
\end{align}
In addition,  it holds that 
\begin{align}\label{lagrange1}
\langle \b{\lambda}, ~\b{{\gamma}_c}(\b{v}) \rangle_c =    L(\b{v}) -  a(\b{u}, \b{v})~\quad\forall~\b{v}\in \b{{V}},
\end{align}
and
\begin{align}\label{lagrange2}
\langle \b{\lambda}, ~\b{{\gamma}_c}(\b{v}) - \b{{\gamma}_c}(\b{u}) \rangle_c \leq \b{0}~\quad\forall~\b{v}\in \b{{K}}.
\end{align}
\end{lemma}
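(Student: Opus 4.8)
The plan is to treat the three assertions in order: the existence and continuity of the map in \eqref{lagrange} is essentially a bookkeeping exercise, the identity \eqref{lagrange1} carries the one genuine subtlety, and the sign condition \eqref{lagrange2} follows immediately from the variational inequality.

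First, to confirm that \eqref{lagrange} defines an element of $\b{H^{-\frac{1}{2}}}(\Gamma_C)$, I would check linearity and boundedness. Linearity in $\b{v}$ is inherited from the linearity of $L$, the linearity of $a(\b{u}, \cdot)$ in its second argument, and the linearity of $\b{\hat{\gamma}_c}$. For boundedness I would chain the three available continuity estimates: the continuity of $L$ on $\b{V}$, the $\b{V}$-continuity of $a(\cdot, \cdot)$, so that $|a(\b{u}, \b{\hat{\gamma}_c}(\b{v}))| \leq C\|\b{u}\|_{\b{H^1}(\O)}\|\b{\hat{\gamma}_c}(\b{v})\|_{\b{H^1}(\O)}$, and the continuity of the right inverse $\b{\hat{\gamma}_c}$, so that $\|\b{\hat{\gamma}_c}(\b{v})\|_{\b{H^1}(\O)} \leq C\|\b{v}\|_{\b{H^{\frac{1}{2}}}(\Gamma_C)}$. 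Composing these bounds yields $|\langle \b{\lambda}, \b{v} \rangle_c| \leq C\|\b{v}\|_{\b{H^{\frac{1}{2}}}(\Gamma_C)}$, which simultaneously gives membership in the dual space and continuity of the map.

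The main obstacle is the identity \eqref{lagrange1}, because $\b{\hat{\gamma}_c}$ is only a right inverse of $\b{\gamma_c}$, so in general $\b{\hat{\gamma}_c}(\b{\gamma_c}(\b{v})) \neq \b{v}$. The strategy is to show that the functional $L - a(\b{u}, \cdot)$ annihilates the difference $\b{w} := \b{v} - \b{\hat{\gamma}_c}(\b{\gamma_c}(\b{v}))$, which by construction satisfies $\b{\gamma_c}(\b{w}) = \b{0}$ since $\b{\gamma_c}\,o\,\b{\hat{\gamma}_c} = \text{identity}$. For this I would observe that any $\b{\varphi} \in \b{V}$ with $\b{\gamma_c}(\b{\varphi}) = \b{0}$ satisfies $\b{u} \pm \b{\varphi} \in \b{K}$, because perturbing by such a $\b{\varphi}$ leaves the normal trace on $\Gamma_C$ unchanged and hence preserves the constraint $v_n \leq 0$. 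Testing the variational inequality \eqref{weak1} with $\b{v} = \b{u} + \b{\varphi}$ and then with $\b{v} = \b{u} - \b{\varphi}$ and combining the two resulting inequalities forces $a(\b{u}, \b{\varphi}) = L(\b{\varphi})$, that is, $L(\b{\varphi}) - a(\b{u}, \b{\varphi}) = 0$ for every such $\b{\varphi}$. Applying this to $\b{w}$ and unwinding the definition \eqref{lagrange} then delivers $\langle \b{\lambda}, \b{\gamma_c}(\b{v}) \rangle_c = L(\b{\hat{\gamma}_c}(\b{\gamma_c}(\b{v}))) - a(\b{u}, \b{\hat{\gamma}_c}(\b{\gamma_c}(\b{v}))) = L(\b{v}) - a(\b{u}, \b{v})$.

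Finally, the sign condition \eqref{lagrange2} is a direct corollary. For $\b{v} \in \b{K}$, since $\b{v} - \b{u} \in \b{V}$, I would apply the identity \eqref{lagrange1} to obtain $\langle \b{\lambda}, \b{\gamma_c}(\b{v}) - \b{\gamma_c}(\b{u}) \rangle_c = L(\b{v} - \b{u}) - a(\b{u}, \b{v} - \b{u})$, and then invoke \eqref{weak1}, which states precisely that $a(\b{u}, \b{v} - \b{u}) \geq L(\b{v} - \b{u})$; this gives the claimed nonpositivity and completes the argument.
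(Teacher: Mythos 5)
Your three-step strategy is exactly the intended one: the paper states this lemma without proof, deferring to \cite{TG:2016:VIDG1}, and the argument there is the same scheme you propose. Your first step (continuity of $\b{\lambda}$ by composing the bounds for $L$, for $a(\b{u},\cdot)$, and for $\b{\hat{\gamma}_c}$) and your third step (the sign condition \eqref{lagrange2} obtained by applying \eqref{lagrange1} to $\b{v}-\b{u}\in\b{{V}}$ and invoking \eqref{weak1}) are complete and correct. The two-sided testing of \eqref{weak1} with $\b{u}\pm\b{\varphi}$ to get the annihilation identity $L(\b{\varphi})=a(\b{u},\b{\varphi})$ is also the right mechanism, and your observation that $\b{\gamma_c}(\b{\varphi})=\b{0}$ forces $\varphi_n=0$ on $\Gamma_C$ is what makes $\b{u}\pm\b{\varphi}$ admissible there.

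There is, however, one step that does not follow from the stated hypotheses: you apply the annihilation identity to $\b{w}=\b{v}-\b{\hat{\gamma}_c}(\b{\gamma_c}(\b{v}))$, but that identity was established only for $\b{\varphi}\in\b{{V}}$ --- membership in $\b{{V}}$ is what guarantees $\b{u}\pm\b{\varphi}\in\b{{K}}$, since $\b{{K}}\subset\b{{V}}$ also encodes the homogeneous Dirichlet condition on $\Gamma_D$ (your justification mentions only the contact constraint). The paper defines $\b{\hat{\gamma}_c}$ merely as a continuous right inverse into $\b{H^1}(\O)$, so $\b{\hat{\gamma}_c}(\b{\gamma_c}(\b{v}))$ need not vanish on $\Gamma_D$ and $\b{w}\in\b{{V}}$ is unjustified. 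This is not cosmetic: for $\b{\psi}\in\b{H^1}(\O)$ with $\b{\gamma_c}(\b{\psi})=\b{0}$ but $\b{\psi}\neq\b{0}$ on $\Gamma_D$, integration by parts formally gives $L(\b{\psi})-a(\b{u},\b{\psi})=-\int_{\Gamma_D}(\b{\sigma}(\b{u})\b{n})\cdot\b{\psi}~ds$, the Dirichlet reaction term, which is nonzero in general, so with an arbitrary right inverse both your step and the identity \eqref{lagrange1} itself can fail. The repair is short and should be stated explicitly: since $\overline{\Gamma}_C\subset\Gamma\setminus\overline{\Gamma}_D$, the two closed sets are disjoint, so one may multiply any bounded lifting by a smooth cutoff $\chi$ with $\chi=1$ in a neighborhood of $\overline{\Gamma}_C$ and $\chi=0$ in a neighborhood of $\overline{\Gamma}_D$; the resulting $\b{\hat{\gamma}_c}$ is still a continuous linear right inverse of $\b{\gamma_c}$ and takes values in $\b{{V}}$. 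With that choice $\b{w}\in\b{{V}}$ and $\b{\gamma_c}(\b{w})=\b{0}$, and your argument closes verbatim.
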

\noindent
For any $\b{v} = (v^1, v^2)=(v^1,0)+(0,v^2) \in  \b{{V}}$,   we will make use of the following representation of Lagrange multiplier $\b{\lambda}=(\lambda^1, \lambda^2)$ in each of its component $\langle \b{\lambda},  \b{v}\rangle_c = \langle \lambda^1, v^1\rangle + \langle \lambda^2, v^2\rangle $ as follows 
\begin{align}
\langle \lambda^1, v^1\rangle : = L((v^1,0)) - a(\b{u},  (v^1,0)); \label{eqnnn1}\\ 
\langle \lambda^2, v^2\rangle : = L((0,v^2)) - a(\b{u},  (0,v^2));\label{eqnnn2}
\end{align}
In the last equations \eqref{eqnnn1} and \eqref{eqnnn2},   $\langle \cdot,\cdot \rangle$ refers to the duality pairing between $H^{-\frac{1}{2}}(\Gamma_C)$ and $H^{\frac{1}{2}}(\Gamma_C)$.
\section{\textbf{Discrete Problem}}
\subsection{Basic Preliminaries and Definitions} \label{sec:Prelim}
In this section,  we recall some basic notations which will be useful in developing the subsequent analysis.
\begin{itemize}[noitemsep]
    \item  $\mathcal{T}_h$ := regular simplical triangulation of domain $\O$,
    \item  $K$:= an element in $\mathcal{T}_h$,
    \item $\mathcal{V}_K$ := set of all vertices of the triangle $K$,
    \item $\mathcal{M}_K$ := set of midpoints of the triangle $K$,
    \item $\mathcal{V}_h$ := set of all vertices of $\mathcal{T}_h$,
      \item $\mathcal{M}_h$ := set of all midpoint of edges in $\mathcal{T}_h$,
       \item $\mathcal{V}_e$ := set of two vertices of edge $e$.
     \item $\mathcal{M}_e$ := midpoint of edge $e$.
    \item $\mathcal{E}_h$ := set of all edges of  $\mathcal{T}_h$,
    \item $\mathcal{E}_h^i$ := set of all interior edges of $\mathcal{T}_h$,
    \item $\mathcal{E}_h^D$:= set of all edges lying  on $\Gamma_D$,
    \item $\mathcal{E}_h^C$:= set of all edges lying on $\Gamma_C$,
    \item $\mathcal{E}_h^0$ :=  $\mathcal{E}_h^i \cup  \mathcal{E}_h^D$,
    \item $\mathcal{T}_h^C$ := set of all triangles $K$ such that $\partial K \cap \Gamma_C$ is non-empty.
     \item $\mathcal{E}_K^C$:= set of contact edges of the triangle $K\in \mathcal{T}_h^C$. 
    \item $\mathcal{M}_K^C$:= set of midpoint of the edges $e \in \mathcal{E}_K^C$. 
        \item $\mathcal{V}_h^D$:= set of all vertices lying on $\overline{\Gamma}_D$, 
       \item $\mathcal{M}_h^D$:= set of midpoints of edges lying on $\Gamma_D$,
        \item $\mathcal{V}_h^N$:= set of all vertices lying on ${\Gamma}_N$,
       \item $\mathcal{M}_h^N$:= set of midpoints of edges lying on $\Gamma_N$,
      \item $\mathcal{V}_h^C$:= set of all vertices lying on $\overline{\Gamma}_C$,
       \item $\mathcal{M}_h^C$:= set of midpoints of edges lying on $\Gamma_C$,
     \item $\mathcal{E}_p$:= set of all edges sharing the node $p$,
     \item $\mathcal{T}_p$:= set of all triangles sharing the node $p$,
     \item $\mathcal{T}_e$:= set of all triangles sharing the edge $e$,
   \item  $h_K$ := diameter of the triangle $K$,
    \item $h$:=  max $\{h_K: K \in \mathcal{T}_h\}$,
   \item  $h_e$:= length of an edge $e$,
     \item $P_{k}(K)$ denotes the space of polynomials of degree $\leq k$ defined on $K$ where $0 \leq k \in \mathbb{Z}$,
\item $|S|$ denotes the cardinality of the set $S$,
\item $X^{'}$ := dual space of a Banach space $X$.
\end{itemize}
In order to formulate the  discontinuous Galerkin methods for the weak formulation $\eqref{weak1}$ conveniently,   we define the following broken Sobolev space 
\begin{align*}
\b{H^1}(\O, \mathcal{T}_h):= \{ \b{v} \in \b{L^2}(\O)|~ \b{v}_K:=\b{v}|_{K} \in \b{H^1}(K)~\forall~K~\in~\mathcal{T}_h \},
\end{align*}
with the  broken Sobolev norm $\|\b{v}\|^2_{\b{h}}=\underset{K \in \mathcal{T}_h}{\sum} \| \b{v}\|^2_{\b{H^1}(K)}~\forall~\b{v} \in \b{H^1}(\O, \mathcal{T}_h)$. 
\par
\noindent
  For a  scalar valued function $w \in {H^1}(\O, \mathcal{T}_h) $,  vector valued function $\b{v} \in \b{H^1}(\O, \mathcal{T}_h)$ and tensor valued function $\b{\phi} \in [{H^1}(\O, \mathcal{T}_h)]^{2 \times 2} $ which are double valued across the inter element boundary $e\in \mathcal{E}_h^i$,  the jumps and averages across the edge $e$ are defined as
\begin{align*}
\smean{w}&:= \dfrac{w|_{K_1} + w|_{K_2}}{2},~~~\sjump{w}:= w|_{K_1}\b{n_1} + w|_{K_2}\b{n_2} ; \\
\smean{\b{v}}&:= \dfrac{\b{v}|_{K_1} + \b{v}|_{K_2}}{2},~~~~\sjump{\b{v}}:= \b{v}|_{K_1}\otimes\b{n_1} + \b{v}|_{K_2}\otimes\b{n_2}; \\
\smean{\b{\phi}}&:= \dfrac{\b{\phi}|_{K_1} + \b{\phi}|_{K_2}}{2},~~~\sjump{\b{\phi}}:= \b{\phi}|_{K_1}\b{n_1} + \b{\phi}|_{K_2}\b{n_2};
\end{align*}
where the edge $e$ is shared by two contiguous elements $K_1$ and $K_2$,  wherein $\b{n_1}$ is the outward unit normal vector pointing from $K_1$ to $K_2$ and $\b{n_2} = -\b{n_1}$.  Further,  for $\b{x}, \b{y} \in \mathbb{R}^2$,  the dyadic product $\b{x} \otimes \b{y} \in \mathbb{R}^{2 \times 2}$ is defined as $(\b{x} \otimes \b{y})_{ij}= x_iy_j$.  For the convenience,  we also define the jump and averages on the boundary edge $e\in \mathcal{E}_h^b$ as
\begin{align*}
\smean{w}&:=w,~~~~~~~~~\sjump{w}:= w\b{n_e} ; \\
\smean{\b{v}}&:= \b{v},~~~~~~~~~~\sjump{\b{v}}:= \b{v}\otimes\b{n_e} ; \\
\smean{\b{\phi}}&:=  \b{\phi},~~~~~~~~~\sjump{\b{\phi}}:= \b{\phi}\b{n_e} ;
\end{align*}
where $K \in \mathcal{T}_h$ is such that $e \subset \partial K$ and $\b{n_e}$ is the unit normal on the edge $e$ pointing outside $K$.  Hereafter,  the notations $\b{\varepsilon}_h(\b{v})$ and $\b{\sigma}_h(\b{v})$ are defined by the following relation $\b{\varepsilon}_h(\b{v}) := \b{\varepsilon}(\b{v}) $ and $\b{\sigma}_h(\b{v}) :=\b{\sigma}(\b{v})$ on any triangle  $K \in \mathcal{T}_h$.  Finally the notation $X \lesssim Y$  means there exists a positive generic constant $C$ such that $X \leq CY$.
\par
\vspace{0.2 cm}
\noindent
For the discretization,  we use discontinuous quadratic finite element space associated with the simplicial triangulation $\mathcal{T}_h$ which is defined as follows
\begin{align*}
\b{{V}_h}:= \bigg \{ \b{v_h} \in \b{L^2}(\O):~ \b{v_h}|_{K} \in[P_2(K)]^2 ~~\forall ~K~\in~\mathcal{T}_h \bigg \}.
\end{align*}
Further,  incorporating the unilateral condition in the form of integral constraints,  we define the discrete counterpart of the set of admissible displacements $\b{{K}}$ as
\begin{align*}
\b{{K}_h} : = \bigg\{ \b{v_h}=(v_h^1, v_h^2) \in \b{{V}_h}: \int_e v_{hn}~ds \leq 0 ~\forall~e\in\mathcal{E}_h^C \bigg \}.
\end{align*}
In order to incorporate unilateral contact condition in conforming setting,  one needs to enforce non-penetration condition $u_{hn} \leq 0$ everywhere on $\Gamma_C$ but the major drawback of this model arises in numerically implementing it.  This motivates us to enforce the non-penetration condition in the form of integral constraints.  
For the ease of analysis,  the further study is carried presuming the following assumptions \textbf{(A)} and \textbf{(B)}.\\
\textbf{Assumption (A)} The outward unit normal vector to $\Gamma_C$ is constant and for simplicity we set it to be $\b{e_1}$ where $\b{e_1}$ and $\b{e_2}$ denotes the standard ordered basis functions of $\mathbb{R}^2$.  Thus,  the discrete set $\b{{K}_h}$ reduces to 
\begin{align*}
\b{{K}_h}  = \bigg\{ \b{v_h}=(v_h^1, v_h^2) \in \b{{V}_h}: \int_e v^1_{h}~ds \leq 0 ~\forall~e\in\mathcal{E}_h^C \bigg \}.
\end{align*} 
\textbf{Assumption (B)} Each triangle $K \in \mathcal{T}^C_h$ has exactly one potential contact boundary edge.
\begin{remark}\label{rem1}
In the view of assumption \textbf{(A)},  it can be easily verified that $\langle \lambda^2,  v^2 \rangle=0$ for any $\b{v}=(v^1,v^2) \in \b{V}$ using equation \eqref{eqnnn2} and the continuous variational inequality \eqref{weak1}.
\end{remark}
\par
\vspace{0.2 cm}
\noindent
In this work,  we also require a quadratic conforming finite element space $\b{{V}_c}= \b{{V}_h} \cap \b{{V}}$ associated with the triangulation $\mathcal{T}_h$.   Next,  we revisit the famous Cl$\acute{e}$ment approximation result \cite[Section 4.8,  Pg. 122]{BScott:2008:FEM} which will be helpful in further analysis.
\begin{lemma}\label{clement}
Let $\b{v} \in \b{{V}}$.  Then,  there exists $\b{v_h} \in \b{{V}_c}$ such that the following estimate holds on any triangle $K\in \mathcal{T}_h$
\begin{align*}
\|\b{v}-\b{v_h}\|_{\b{H^l}(K)} \lesssim h^{1-l}_K\|\b{v_h}\|_{\b{H^1}(\mathcal{T}_K)},~~l=0,1,
\end{align*}
where $\mathcal{T}_K$ refers to the set of triangles $\tilde{K} \in \mathcal{T}_h$ such that $\overline{\tilde{K}} \cap \overline{K} \neq \emptyset$.
\end{lemma}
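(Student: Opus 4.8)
The plan is to realize $\b{v_h}$ as a Cl\'ement-type quasi-interpolant and then bound the error locally on each $K$ by inserting a well-chosen local polynomial. First I would attach to every Lagrange node $p$ of the quadratic space $\b{{V}_c}$ (the vertices in $\mathcal{V}_h$ and the edge midpoints in $\mathcal{M}_h$) the local patch $\omega_p := \bigcup_{\tilde K \in \mathcal{T}_p} \tilde K$, and define $\b{\pi_p}\b{v} \in [P_1(\omega_p)]^2$ as the componentwise $\b{L^2}(\omega_p)$-orthogonal projection of $\b{v}$ onto affine polynomials. With $\{\b{\phi_p}\}$ denoting the nodal basis of $\b{{V}_c}$, I would set
\begin{equation*}
\b{v_h} := \sum_p (\b{\pi_p}\b{v})(p)\,\b{\phi_p},
\end{equation*}
where the sum omits the nodes lying on $\overline{\Gamma}_D$ (equivalently, their coefficients are set to $\b{0}$), so that $\b{v_h}=\b{0}$ on $\Gamma_D$ and hence $\b{v_h}\in\b{{V}_c}$.

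The core of the argument is a local, scaling-based estimate on a fixed $K$. Since the nodal basis forms a partition of unity and each $\b{\pi_p}$ reproduces affine functions, the operator reproduces constants in the interior; thus for any constant vector $\b{c}$ one may write $\b{v}-\b{v_h}=(\b{v}-\b{c})-\sum_{p}(\b{\pi_p}(\b{v}-\b{c}))(p)\,\b{\phi_p}$. Applying the triangle inequality on $K$ and estimating each term via the scalings $\|\b{\phi_p}\|_{\b{H^l}(K)}\lesssim h_K^{1-l}$ (obtained on the reference element and transported by shape regularity) together with the inverse-type bound $|(\b{\pi_p}\b{w})(p)|\lesssim h_K^{-1}\|\b{\pi_p}\b{w}\|_{\b{L^2}(\omega_p)}$ valid for polynomials, the problem reduces to controlling $\|\b{v}-\b{c}\|_{\b{L^2}(\omega_p)}$ and $\|\b{\pi_p}(\b{v}-\b{c})\|_{\b{L^2}(\omega_p)}$.

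At this point I would invoke the $\b{L^2}$-stability of the projection $\b{\pi_p}$ together with the Bramble--Hilbert (Deny--Lions) lemma on the patch: choosing $\b{c}$ to be the relevant local average yields $\|\b{v}-\b{c}\|_{\b{L^2}(\omega_p)}\lesssim h_{\omega_p}\,|\b{v}|_{\b{H^1}(\omega_p)}$, and $\b{L^2}$-stability gives the same bound for the projected quantity. Summing over the finitely many nodes of $K$, and using shape regularity to identify $h_{\omega_p}$ with $h_K$ and to bound the overlap of the patches, the contributions collapse onto $\mathcal{T}_K$, producing $\|\b{v}-\b{v_h}\|_{\b{H^l}(K)}\lesssim h_K^{1-l}\,\|\b{v}\|_{\b{H^1}(\mathcal{T}_K)}$, which is the asserted estimate.

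I expect the main obstacle to be the treatment of the nodes on $\overline{\Gamma}_D$, where zeroing the coefficients destroys the exact reproduction of constants and so breaks the clean cancellation used above. The remedy is to exploit the homogeneous Dirichlet condition $\b{v}=\b{0}$ on $\Gamma_D$: near such a node a Poincar\'e/Friedrichs inequality on $\omega_p$ controls $\|\b{v}\|_{\b{L^2}(\omega_p)}$ directly by $|\b{v}|_{\b{H^1}(\omega_p)}$, restoring the $h_K^{1-l}$ scaling without requiring constant reproduction. Since this is the classical Cl\'ement estimate, one may alternatively appeal directly to \cite{BScott:2008:FEM}.
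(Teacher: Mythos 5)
Your construction is correct and is exactly the classical Cl\'ement argument that the paper itself relies on: the paper gives no proof of Lemma \ref{clement}, citing \cite[Section 4.8]{BScott:2008:FEM} instead, and your patchwise $\b{L^2}$-projections, partition-of-unity cancellation of a local constant, the scalings $\|\b{\phi_p}\|_{\b{H^l}(K)}\lesssim h_K^{1-l}$ and $|(\b{\pi_p}\b{w})(p)|\lesssim h_K^{-1}\|\b{\pi_p}\b{w}\|_{\b{L^2}(\omega_p)}$, and the Friedrichs treatment of the zeroed nodes on $\overline{\Gamma}_D$ reproduce precisely the argument of that reference (the only mildly glossed points -- a single constant $\b{c}$ serving all overlapping patches, and the fact that each patch of a Dirichlet node contains an edge on $\Gamma_D$ -- are standard consequences of shape regularity and of the mesh resolving the boundary partition). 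Note also that your final bound, with $\|\b{v}\|_{\b{H^1}(\mathcal{T}_K)}$ on the right-hand side, is the correct form; the paper's statement writes $\|\b{v_h}\|_{\b{H^1}(\mathcal{T}_K)}$, which is evidently a typo.
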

\par 
\noindent
Next,  we state the discrete trace and inverse inequalities \cite{BScott:2008:FEM,Ciarlet:1978:FEM} which will be frequently used in the convergence analysis ahead.
\begin{itemize}
\item \textbf{Discrete trace inequality}:  For all $K \in \mathcal{T}_h$ and $e \subset \partial K$,  the following inequality holds for all $\b{v} \in \b{H^1}(K)$.
\end{itemize}
\begin{align*}
\|\b{v}\|^2_{\b{L^2}(e)} \lesssim h^{-1}_e \|\b{v}\|^2_{\b{L^2}(K)} +h_e |\b{v}|^2_{\b{H^1}(K)}.
\end{align*}
\begin{itemize}
\item \textbf{Inverse inequalities}:  For all $K \in \mathcal{T}_h$ and $e \subset \partial K$,  the inequalities
\begin{align*}
\lVert \b{v_h} \rVert_{\b{L^2}{(e)}} &\lesssim h_e^{-\frac{1}{2}} \lVert \b{v_h} \rVert_{\b{L^2}(K)}, \\
\lVert \nabla \b{v_h} \rVert_{\b{L^2}{(K)}}&\lesssim h_K^{-1} \lVert \b{v_h} \rVert_{\b{L^2}(K)},
\end{align*}
\end{itemize}
~~hold for any $\b{v_h}$ in the discrete space $\b{{V}_h}$.
\subsection{DG Formulation}
This subsection is devoted to state various DG formulations for the continuous variational inequality \eqref{weak1}.  For the sake of brievity,  we first list the bilinear forms $\mathcal{A}_h(\cdot, \cdot)$  corresponding to well known DG methods for the contact problem.  We refer to article \cite{WHC:2011:DGSP} for the complete derivations of these DG formulations.  To this end,  let $\b{r_o}$ and $\b{r_e}$ denotes the global and local lifting operators \cite{ABCM:2002:UnifiedDG},  respectively.  The various DG methods are listed as follows:
\begin{itemize} 
\item \textbf{SIPG method} \cite{Arnold:1982:IPD,  WHC:2011:DGSP}:
\begin{align*}
\hspace{3mm}
\mathcal{A}_{h}^{(1)}(\b{u_h},\b{v_h}) &= \sum\limits_{K \in\mathcal{T}_h}\int\limits_K \b{\sigma}(\b{u_h}):\b{\varepsilon}(\b{v_h})~\textbf{dx} - \int \limits_{e \in \mathcal{E}^0_h } \sjump{\b{u_h}}:\smean{\b{\sigma}_h(\b{v_h})} ~\textbf{ds} - \int \limits_{e \in \mathcal{E}^0_h } \sjump{\b{v_h}}:\smean{\b{\sigma}_h(\b{u_h})}~\textbf {ds} \\
& + \int \limits_{e \in \cE_h^0}\eta h_e^{-1}\sjump{\b{u_h}}:\sjump{\b{v_h}}~\textbf{ds},
\end{align*}
\noindent
for $\b{u_h}, \b{v_h} \in \b{{V}_h}$ and $\eta > \eta_0>0$.
\bigbreak
\item \textbf{NIPG method} \cite{Apriori:2001:Wheeler,WHC:2011:DGSP}
\begin{align*}
\hspace{3mm}
\mathcal{A}_{h}^{(2)}(\b{u_h},\b{v_h}) &= \sum\limits_{K\in\mathcal{T}_h}\int\limits_K \b{\sigma}(\b{u_h}):\b{\varepsilon}(\b{v_h})~ \textbf{dx} + \int \limits_{e \in \mathcal{E}^0_h } \sjump{\b{u_h}}:\smean{\b{\sigma}_h(\b{v_h})} ~\textbf{ds} - \int \limits_{e \in \mathcal{E}^0_h } \sjump{\b{v_h}}:\smean{\b{\sigma}_h(\b{u_h})}~\textbf {ds} \\
& + \int \limits_{e \in \cE_h^0}\eta h_e^{-1}\sjump{\b{u_h}}:\sjump{\b{v_h}}~\textbf{ds},
\end{align*}
for $\b{u_h}, \b{v_h} \in \b{{V}_h}$ and $\eta > 0.$
\bigbreak
\item  \textbf{Bassi et al.} \cite{ Bassi,WHC:2011:DGSP}:
\begin{align*}
\mathcal{A}_{h}^{(3)}(\b{u_h},\b{v_h}) &= \sum\limits_{K\in\mathcal{T}_h}\int\limits_K \b{\sigma}(\b{u_h}):\b{\varepsilon}(\b{v_h})~\textbf{dx}  - \int \limits_{e \in \mathcal{E}^0_h } \sjump{\b{u_h}}:\smean{\b{\sigma}_h(\b{v_h})}~\textbf{ds} - \int \limits_{e \in \mathcal{E}^0_h } \sjump{\b{v_h}}:\smean{\b{\sigma}_h(\b{u_h})}~\textbf {ds}\\&+ \sum\limits_{e \in \cE^0_h}\int_\O \eta \mathcal{C}\b{r}_e(\sjump{\b{u_h}}):\b{r}_e(\sjump{\b{v_h}})~\textbf{dx},
\end{align*} 
for $\b{u_h}, \b{v_h} \in \b{{V}_h}$ and $\eta > 3.$
\bigbreak
\item  \textbf{Brezzi et al.} \cite{BMMPR:2000:BrezziDG,WHC:2011:DGSP}:
\begin{align*}
\mathcal{A}_{h}^{(4)}(\b{u_h},\b{v_h}) &= \sum\limits_{K\in\mathcal{T}_h}\int\limits_K \b{\sigma}(\b{u_h}):\b{\varepsilon}(\b{v_h})~\textbf{dx}  - \int \limits_{e \in \mathcal{E}^0_h } \sjump{\b{u_h}}:\smean{\b{\sigma}_h(\b{v_h})}~\textbf{ds} - \int \limits_{e \in \mathcal{E}^0_h } \sjump{\b{v_h}}:\smean{\b{\sigma}_h(\b{u_h})}~\textbf {ds}\\&+  \int \limits_\O  \b{r}_o(\sjump{\b{v_h}}):\mathcal{C}\b{r}_o(\sjump{\b{u_h}})~\textbf{dx} + \sum\limits_{e \in \cE^0_h}\int_\O \eta \mathcal{C}\b{r}_e(\sjump{\b{u_h}}):\b{r}_e(\sjump{\b{v_h}})~\textbf{dx},
\end{align*} 
for $\b{u_h}, \b{v_h} \in \b{{V}_h}$ and $\eta > 0.$
\bigbreak
\item  \textbf{LDG Method} \cite{CCPS:2000:LDG,  Chen::2010:ldg, WHC:2011:DGSP}:
\begin{align*}
\mathcal{A}_{h}^{(5)}(\b{u_h},\b{v_h}) &= \sum\limits_{K\in\mathcal{T}_h}\int\limits_K \b{\sigma}(\b{u_h}):\b{\varepsilon}(\b{v_h})~\textbf{dx}  - \int \limits_{e \in \mathcal{E}^0_h } \sjump{\b{u_h}}:\smean{\b{\sigma}_h(\b{v_h})}~\textbf{ds} - \int \limits_{e \in \mathcal{E}^0_h } \sjump{\b{v_h}}:\smean{\b{\sigma}_h(\b{u_h})}~\textbf {ds}\\&+ \int \limits_\O \b{r}_o(\sjump{\b{v_h}}):\mathcal{C}\b{r}_o(\sjump{\b{u_h}})~\textbf{dx}+ \int \limits_{e \in \cE_h^0}\eta h_e^{-1}\sjump{\b{u_h}}:\sjump{\b{v_h}}~\textbf{ds} ,
\end{align*} 
for $\b{u_h}, \b{v_h} \in \b{{V}_h}$ and $\eta > 0.$
\end{itemize}
Finally,  we are in the position to introduce the discrete problem which reads as follows: find $\b{u_h} \in \b{{K}_h}$ such that
\begin{align}\label{discrete}
\mathcal{A}_h(\b{u_h}, \b{v_h}-\b{u_h}) \geq L(\b{v_h}-\b{u_h}) ~\quad\forall~\b{v_h} \in \b{{K}_h},
\end{align}
where $\mathcal{A}_h(\b{u_h}, \b{v_h}) $ represents one of the bilinear form $\mathcal{A}_h^{(i)}(\b{u_h}, \b{v_h}),~1\leq i \leq 5$.  Note that, the discrete bilinear form can be reformulated as 
\begin{align}\label{reformulate}
\mathcal{A}_h(\b{u_h}, \b{v_h}) = a_h(\b{u_h}, \b{v_h}) + b_h(\b{u_h}, \b{v_h}),
\end{align}
where 
\begin{align*}
a_h(\b{u_h}, \b{v_h}) = \sum\limits_{K\in\mathcal{T}_h}\int\limits_K \b{\sigma}(\b{u_h}):\b{\varepsilon}(\b{v_h})~\textbf{dx},
\end{align*}
and $b_h(\b{u_h}, \b{v_h})$ consists of all the remaining (consistency and stability) terms.  We note  the following  bound on the bilinear form $b_h(\cdot, \cdot)$ for the various choices of DG methods listed above.
\begin{align}\label{bound3}
|b_h(\b{w_h},  \b{v_h})| \lesssim \bigg( \sum\limits_{e \in \mathcal{E}_h^0} \frac{1}{h_e}\| \sjump {\b{w_h}} \|^2_{\b{L^2}(e)}\bigg)^{\frac{1}{2}}|\b{v_h}|_{\b{H^1}(\Omega)}~\quad \forall~\b{w_h}\in \b{{V}_h}, ~\b{v_h}\in \b{{V}_c}.
\end{align}
\par 
\noindent
The next task is to define DG norm on the discrete space $\b{{V}_h}$,  for which we define the following relations: for any $K \in \mathcal{T}_h$
\begin{align}\label{relation}
|\b{v_h}|^2_K= \int \limits_K \b{\sigma}(\b{u_h}) : \b{\varepsilon}(\b{v_h})~\b{dx}, ~~~~~~|\b{v_h}|^2_{\b{h}}= \sum\limits_{K \in \mathcal{T}_h}|\b{v_h}|^2_K, ~~~~~~
|\b{v_h}|^2_{\b{*}}= \sum\limits_{e \in \mathcal{E}_h^0}\frac{1}{h_e} \| \sjump {\b{v_h}} \|^2_{\b{L^2}(e)}.
\end{align}
We now define DG norm on space $\b{{V}_h} + \b{{V}}$ as 
\begin{align*}
\b{\norm{\b{v}}}^2_{\b{h}}:= |\b{v}|^2_{\b{h}} + |\b{v}|^2_{\b{*}} + \sum\limits_{e \in \mathcal{E}_h^0}h_e\|\smean{\b{\varepsilon}_h(\b{v})}\|^2_{\b{L^2}(e)} ~\forall~\b{v}\in \b{{V}_h} + \b{{V}}.
\end{align*} 

\noindent
Note that the norm $\norm{\cdot}_{\b{h}}$ is equivalent to the usual DG norm$\bigg(\b{\|\b{v}\|}^2_{\b{h}}+\lvert \b{v}\rvert^2_* + \sum\limits \limits_{e \in \mathcal{E}_h^0}h_e\|\smean{\b{\varepsilon}_h(\b{v}}\|^2_{\b{L^2}(e)}\bigg)^{\frac{1}{2}}$ by Korn's-inequality and Poincar$\acute{e}$-Fredrichs inequality for piecewise $H^1$ spaces \cite{ Brenner::2004:Korn, Brenner:2001:Poincare}.
Observe that with respect to $\norm{\cdot}_{\b{h}}$,  the bilinear form $\mathcal{A}_h(\cdot,\cdot)$ is bounded over the space $\b{{V}_h}+\b{{V}},$ i.e.
\begin{align*}
\mathcal{A}_h(\b{v},\b{w}) \leq \norm{\b{v}}_{\b{h}}\norm{\b{w}}_{\b{h}}~~\quad\forall ~\b{v},\b{w}~\in~ \b{{V}_h}+ \b{{V}},
\end{align*}
and is coercive over the space $\b{{V}_h},$ i.e.
\begin{align*}
\mathcal{A}_h(\b{v_h},\b{v_h}) \geq \alpha \norm{\b{v_h}}^2_{\b{h}}~\quad \forall ~\b{v_h}~\in~ \b{{V}_h}.
\end{align*}
{
where $\alpha>0$ is the $\b{V}$- ellipticity constant. \\}

\noindent
Analogous to the continuous problem \eqref{weak1},  the discrete problem \eqref{discrete} also possess a unique solution owing to the result of Stampacchia.  In the next subsection,  we  introduce some additional functional tools which will be helpful in  developing further analysis. 
\subsection{Interpolation Operators}
 We will define couple of interpolation operators which will be crucially used in establishing further results. 
 \begin{itemize}
 \item Define the interpolation operator  $\b{\mathcal{I}_h}: \b{{V}} \cap [C(\overline{\O})]^2 \longrightarrow \b{{V}_h}$ as
\begin{align*}
\b{\mathcal{I}_h} \b{v} := \b{\mathcal{I}}_K\b{v}~~\quad\forall~K~\in~\mathcal{T}_h,
\end{align*}
where,  the operator $\b{\mathcal{I}}_K\b{v}$ is given by 
\begin{itemize}
\item If $K \in \mathcal{T}_h^C$,  define
\begin{align*}
\b{\mathcal{I}}_K\b{v}(p) &= \b{v}(p)~ \quad\forall ~p \in (\mathcal{V}_K \cup \mathcal{M}_K)\backslash \mathcal{M}_K^C,  \\
\int_e \b{\mathcal{I}}_K\b{v} &= \int_e \b{v},\quad ~e\in\mathcal{E}^C_K.  
\end{align*}
otherwise,
\begin{align*}
\b{\mathcal{I}}_K\b{v}(p) &= \b{v}(p)~\quad \forall ~p \in (\mathcal{V}_K \cup \mathcal{M}_K).  
\end{align*}
\end{itemize} 
 \end{itemize}
It can be observed that the interpolation operator $\b{\mathcal{I}_h}$ is invariant for any $\b{v}\in [P_2(K)]^2$.  Thus,  in view of  Bramble Hilbert Lemma \cite{Ciarlet:1978:FEM},  we determine the following approximation property of the  map $\b{\mathcal{I}_h}$.
\begin{lemma}\label{Interpolation1}
For any $\b{v} \in \b{H^\nu}(K),  K \in \mathcal{T}_h$,  the following holds
\begin{align*}
\|\b{v}-\b{ \mathcal{I}}_K\b{v}\|_{\b{H^l}(K)} \lesssim  h^{\nu-l}|\b{v}|_{\b{H^{\nu}}(K)}~~~~\text{for}~~0\leq l\leq \nu,
\end{align*}
where $~2\leq\nu\leq3$.
\end{lemma}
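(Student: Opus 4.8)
The plan is to reduce the estimate to a fixed reference simplex via an affine change of variables and then invoke the (fractional) Bramble--Hilbert lemma, exactly as in the classical theory of \cite{Ciarlet:1978:FEM}. The scaling and Bramble--Hilbert steps are entirely routine; the one point that genuinely needs checking is that $\b{\mathcal{I}}_K$ is a \emph{bounded} linear operator on $\b{H^\nu}(K)$ over the range $2\leq\nu\leq3$, which is precisely where the hypothesis $\nu\geq2$ is used.

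First I would set up the reference configuration and verify boundedness there. Let $\widehat{K}$ be a fixed reference triangle and $F_K(\widehat x)=B_K\widehat x+b_K$ the affine map with $F_K(\widehat K)=K$; by shape regularity of $\mathcal{T}_h$ one has $\|B_K\|\lesssim h_K$, $\|B_K^{-1}\|\lesssim h_K^{-1}$ and $|\det B_K|^{1/2}\sim h_K$. Writing $\widehat{\b{v}}=\b{v}\circ F_K$ and letting $\widehat{\b{\mathcal{I}}}$ be the interpolation operator on $\widehat K$ defined by the same degrees of freedom (vertex and non-contact midpoint values, together with the edge averages on a contact edge), one checks that these functionals commute with the affine pullback: vertices map to vertices and midpoints to midpoints, while the edge-average condition $\int_e\b{\mathcal{I}}_K\b{v}=\int_e\b{v}$ is equivalent, after the constant arc-length rescaling, to $\int_{\widehat e}\widehat{\b{\mathcal{I}}}\,\widehat{\b{v}}=\int_{\widehat e}\widehat{\b{v}}$. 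Hence $\widehat{\b{\mathcal{I}}_K\b{v}}=\widehat{\b{\mathcal{I}}}\,\widehat{\b{v}}$, and it suffices to analyze the single, mesh-independent operator $\widehat{\b{\mathcal{I}}}$. Since $\nu\geq2>1$ in two dimensions, the Sobolev embedding $\b{H^\nu}(\widehat K)\hookrightarrow[C^0(\overline{\widehat K})]^2$ controls the point-evaluation functionals, and the trace theorem controls the edge-average functionals through $\|\widehat{\b{v}}\|_{\b{L^2}(\widehat e)}\lesssim\|\widehat{\b{v}}\|_{\b{H^\nu}(\widehat K)}$; therefore $\widehat{\b{\mathcal{I}}}:\b{H^\nu}(\widehat K)\to[P_2(\widehat K)]^2$ is bounded.

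With boundedness in hand I would apply the Bramble--Hilbert lemma on $\widehat K$. Because $\widehat{\b{\mathcal{I}}}$ fixes every element of $[P_2(\widehat K)]^2$ (the invariance noted immediately before the lemma), the error map $\widehat{\b{v}}\mapsto\widehat{\b{v}}-\widehat{\b{\mathcal{I}}}\,\widehat{\b{v}}$ annihilates $[P_2(\widehat K)]^2$, which for $2\leq\nu\leq3$ contains $[P_{\lceil\nu\rceil-1}(\widehat K)]^2$, and is bounded from $\b{H^\nu}(\widehat K)$ into $\b{H^l}(\widehat K)$ for $0\leq l\leq\nu$. The Bramble--Hilbert lemma then gives, for each $0\leq m\leq l$,
\begin{align*}
\|\widehat{\b{v}}-\widehat{\b{\mathcal{I}}}\,\widehat{\b{v}}\|_{\b{H^m}(\widehat K)}\lesssim|\widehat{\b{v}}|_{\b{H^\nu}(\widehat K)}.
\end{align*}

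Finally I would scale back. The standard seminorm transformation estimates yield $|\widehat{\b{v}}|_{\b{H^\nu}(\widehat K)}\lesssim h_K^{\nu-1}|\b{v}|_{\b{H^\nu}(K)}$ and $|\b{v}-\b{\mathcal{I}}_K\b{v}|_{\b{H^m}(K)}\lesssim h_K^{1-m}\|\widehat{\b{v}}-\widehat{\b{\mathcal{I}}}\,\widehat{\b{v}}\|_{\b{H^m}(\widehat K)}$. Chaining these with the reference estimate produces, for each $0\leq m\leq l$,
\begin{align*}
|\b{v}-\b{\mathcal{I}}_K\b{v}|_{\b{H^m}(K)}\lesssim h_K^{\nu-m}|\b{v}|_{\b{H^\nu}(K)}.
\end{align*}
Summing over $m=0,\dots,l$ and using $h_K\leq h\leq1$ (so that the $m=l$ term dominates) gives $\|\b{v}-\b{\mathcal{I}}_K\b{v}\|_{\b{H^l}(K)}\lesssim h_K^{\nu-l}|\b{v}|_{\b{H^\nu}(K)}\lesssim h^{\nu-l}|\b{v}|_{\b{H^\nu}(K)}$, which is the claim. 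I expect the main obstacle to be the boundedness of $\widehat{\b{\mathcal{I}}}$ on $\b{H^\nu}(\widehat K)$: everything hinges on $\nu>1$ so that the point-value functionals are admissible, and on the fractional trace/Gagliardo seminorm scaling for non-integer $\nu$; once these are settled the remaining steps are standard.
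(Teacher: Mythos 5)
Your proof is correct and takes essentially the same route as the paper: the paper's entire argument for this lemma is the observation that $\b{\mathcal{I}_h}$ reproduces $[P_2(K)]^2$ followed by an appeal to the Bramble--Hilbert lemma, which is exactly your scheme with the routine supporting details written out (affine equivalence of the vertex, midpoint and edge-average degrees of freedom, boundedness of the reference operator via $\b{H^{\nu}}(\widehat{K})\hookrightarrow [C^0(\overline{\widehat{K}})]^2$ for $\nu\geq 2$ plus the trace theorem, and the standard seminorm scaling). Your identification of $\nu>1$ (point evaluations) and the fractional Deny--Lions step for non-integer $\nu$ as the only non-trivial points is accurate, and the argument goes through as you describe.
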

\noindent
In order to define next interpolation operator $\b{\pi_h}$,  we need to introduce some more notations related to $\Gamma_C$.  We define the space $\b{M_h}(\Gamma_C)$ as 
\begin{align*}
\b{M_h}(\Gamma_C) := \{ \b{v_h} \in \b{L^2}(\Gamma_C):~\b{v_h}|_e \in [P_0(e)]^2~\forall~e\in \mathcal{E}^C_h \}.
\end{align*}
Next,  with the help of the space $\b{M_h}(\Gamma_C)$, we define a projection operator 
$\b{\pi_h} : ~\b{L^2}(\Gamma_C) \longrightarrow  \b{M_h}(\Gamma_C)$ as $\b{\pi_h}(\b{v}) = \b{\pi_h}(\b{v})|_e~\forall~\b{v}\in\b{L^2}(\Gamma_C),  e\in \mathcal{E}^C_h $ where 
\begin{align}\label{pi_h}
\b{\pi_h}(\b{v})|_e:= \dfrac{1}{h_e} \int \limits_e \b{v}~\textbf{ds}.
\end{align}
We make use of the representation $\b{\pi_h}(\b{v}) = ( \pi^1_h(\b{v}),  \pi^2_h(\b{v}))$ where the components are given by
\begin{align*}
 \pi^1_h(\b{v}) =  \dfrac{1}{h_e} \int \limits_e {v^1}~{ds}, ~~~~~
\pi^2_h(\b{v}) =  \dfrac{1}{h_e} \int \limits_e {v^2}~{ds},
\end{align*}
for all $\b{v}=(v^1,v^2)\in \b{L^2}(\Gamma_C)$.
Further,  it is well known that the interpolation operator $\b{\pi_h}$ satisfies the following approximation properties \cite{BScott:2008:FEM,  Ciarlet:1978:FEM}
\begin{align}\label{prop}
\|\b{v}-\b{\pi_h}(\b{v})\|_{\b{L^2}(\Gamma_C)} \lesssim h^{\nu}\|\b{v}\|_{\b{H^{\nu}}(\Gamma_C)}, ~\quad 0\leq \nu \leq 1, ~\b{v}\in \b{H^{\nu}}(\Gamma_C).
\end{align}
On the similar lines,  for any scalar valued function $w \in L^2(\Gamma_C)$,  we denote $\tilde{\pi}_h(w)$ as the  $L^2$ projection of ${w}$ onto the space of constant functions which in turn fulfils the approximation properties  listed in Lemma \ref{approx_result} (see \cite{apriori:2001:quad}).
\begin{lemma}\label{approx_result}
Let $ 0 \leq \nu_1,  \nu_2\leq 1$,  then for $ w \in H^{\nu_2}(\Gamma_C)$, it holds that
\begin{align*}
\|w-\tilde{\pi}_h({w})\|_{({H^{\nu_1}}(\Gamma_C))^{'}} \lesssim h^{\nu_1 +\nu_2}|{w}|_{{H^{\nu_2}}(\Gamma_C)}.
\end{align*}
\end{lemma}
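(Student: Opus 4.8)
The plan is a duality argument that exploits the $L^2$-orthogonality of the projection $\tilde{\pi}_h$. First I would express the dual norm as a supremum over test functions,
\begin{align*}
\|w-\tilde{\pi}_h(w)\|_{(H^{\nu_1}(\Gamma_C))'} = \sup_{\phi\in H^{\nu_1}(\Gamma_C),\,\phi\neq 0} \frac{\langle w-\tilde{\pi}_h(w),\,\phi\rangle}{\|\phi\|_{H^{\nu_1}(\Gamma_C)}},
\end{align*}
and estimate the numerator for a fixed but arbitrary $\phi$. The pivotal observation is that, since $\tilde{\pi}_h$ is the $L^2(\Gamma_C)$-orthogonal projection onto functions that are constant on each contact edge $e\in\mathcal{E}_h^C$, the error $w-\tilde{\pi}_h(w)$ is $L^2$-orthogonal to every such piecewise constant; in particular it is orthogonal to $\tilde{\pi}_h(\phi)$, which is itself piecewise constant. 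Because $\nu_1,\nu_2\ge 0$ embed $H^{\nu_i}(\Gamma_C)$ into $L^2(\Gamma_C)$, the duality pairing here coincides with the $L^2$ inner product, so I may subtract $\tilde{\pi}_h(\phi)$ for free:
\begin{align*}
\langle w-\tilde{\pi}_h(w),\,\phi\rangle = \langle w-\tilde{\pi}_h(w),\,\phi-\tilde{\pi}_h(\phi)\rangle.
\end{align*}

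Next I would apply a Cauchy--Schwarz inequality edge by edge and invoke the local $L^2$-approximation property of the projection onto constants (the scalar analogue of \eqref{prop}, obtained from a Bramble--Hilbert/scaling argument since constants are reproduced exactly): on each $e\in\mathcal{E}_h^C$,
\begin{align*}
\|w-\tilde{\pi}_h(w)\|_{L^2(e)} \lesssim h_e^{\nu_2}|w|_{H^{\nu_2}(e)}, \qquad \|\phi-\tilde{\pi}_h(\phi)\|_{L^2(e)} \lesssim h_e^{\nu_1}|\phi|_{H^{\nu_1}(e)}.
\end{align*}
Summing the products over the contact edges, bounding $h_e\le h$, and then applying a discrete Cauchy--Schwarz over the edge index together with the fact that the local (Gagliardo) seminorms sum up to a quantity controlled by the global seminorm gives
\begin{align*}
\langle w-\tilde{\pi}_h(w),\,\phi\rangle \lesssim h^{\nu_1+\nu_2}\,|w|_{H^{\nu_2}(\Gamma_C)}\,|\phi|_{H^{\nu_1}(\Gamma_C)} \le h^{\nu_1+\nu_2}\,|w|_{H^{\nu_2}(\Gamma_C)}\,\|\phi\|_{H^{\nu_1}(\Gamma_C)}.
\end{align*}
Dividing by $\|\phi\|_{H^{\nu_1}(\Gamma_C)}$ and taking the supremum over admissible $\phi$ then yields the claimed bound.

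I expect the only genuinely delicate point to be the fractional-order bookkeeping. Justifying the local estimates with the \emph{seminorm} $|\cdot|_{H^{\nu_i}(e)}$ rather than the full norm, for $0<\nu_i<1$, requires the Bramble--Hilbert argument in a form valid for fractional Sobolev spaces; and the passage from local to global seminorms must be handled carefully, using that the cross-edge contributions in the global Gagliardo double integral are nonnegative so that $\sum_{e}|\cdot|_{H^{\nu}(e)}^2\le|\cdot|_{H^{\nu}(\Gamma_C)}^2$. Everything else---the duality characterization, the orthogonality cancellation, and the two Cauchy--Schwarz steps---is routine.
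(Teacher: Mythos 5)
Your proof is correct, and the paper itself offers no proof of this lemma---it simply cites \cite{apriori:2001:quad}, where the argument is exactly your scheme: duality characterization of the negative-order norm, the $L^2$-orthogonality trick $\langle w-\tilde{\pi}_h(w),\phi\rangle=\langle w-\tilde{\pi}_h(w),\phi-\tilde{\pi}_h(\phi)\rangle$, edgewise Cauchy--Schwarz with local approximation estimates, and superadditivity of the Gagliardo seminorms over the contact edges. The one point you flag as delicate is in fact elementary here: since $\tilde{\pi}_h$ reproduces constants and $\tilde{\pi}_h(w)|_e=\frac{1}{h_e}\int_e w\,ds$, the local bound $\|w-\tilde{\pi}_h(w)\|_{L^2(e)}\lesssim h_e^{\nu}|w|_{H^{\nu}(e)}$ for $0<\nu<1$ follows directly from Jensen's inequality together with $|x-y|\le h_e$ inside the double integral defining the seminorm, so no fractional Bramble--Hilbert machinery is needed.
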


\section{\textbf{A priori Error Estimates}}
In this section we derive \textit{a priori} error estimates for the error in DG norm under some appropriate regularity assumption on the exact solution $\b{u}$.  The unilateral contact condition give rise to the singular behaviour of $\b{u}$ in the vicinity of free boundary around $\Gamma_C$ even when the forces $\b{f}$ and $\b{g}$ are sufficiently regular.  Keeping this in view,   it shall be a realistic to assume $\b{u} \in \b{H^{2+\epsilon}}(\Omega)$ where $0 <  \epsilon \leq \frac{1}{2}$.  On the contrary,  if the free boundary vanishes or become sufficiently  smooth the contact problem boils down to linear elasticity problem.  The following theorem ensures the optimal order \textit{a priori} error estimates for the Signorini contact problem.
\begin{theorem}\label{thm1}
Let $\b{u} \in \b{H^{2+\epsilon}}(\Omega) \cap \b{K}$ where $0 < \epsilon \leq \frac{1}{2}$ be the solution of continuous problem \eqref{weak1}  and $\b{u_h}$ be the solution of discrete problem \eqref{discrete}.  Then,  the following holds
\begin{align*}
\b{\norm{\b{u-u_h}}_h} \lesssim  h^{1+\epsilon}.
\end{align*}
\end{theorem}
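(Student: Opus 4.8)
The plan is to follow a Falk–Glowinski–type argument for variational inequalities, adapted to the nonconforming DG setting by exploiting the coercivity and boundedness of $\mathcal{A}_h(\cdot,\cdot)$ in the norm $\norm{\cdot}_{\b{h}}$, the consistency of the DG formulation, and the sign structure of the contact stress on $\Gamma_C$. The first step is to record a consistency identity for the exact solution. Since $\b{u}\in\b{H^{2+\epsilon}}(\O)$ has single-valued traces and fluxes, the jumps $\sjump{\b{u}}$ vanish on every $e\in\mathcal{E}_h^0$, so all lifting and penalty terms drop out; integrating $\sum_K\int_K\b{\sigma}(\b{u}):\b{\varepsilon}(\b{v_h})$ by parts elementwise and invoking \eqref{1.4}--\eqref{1.7} (with $\b{\sigma_\tau}(\b{u})=0$ on $\Gamma_C$), the interior-edge and Dirichlet-edge contributions cancel against the consistency terms of the various $\mathcal{A}_h^{(i)}$, leaving
\begin{align*}
\mathcal{A}_h(\b{u},\b{v_h}) = L(\b{v_h}) + \int_{\Gamma_C}\sigma_n(\b{u})\,v_{hn}\,\b{ds}\qquad\forall~\b{v_h}\in\b{{V}_h}.
\end{align*}

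Next I would fix an arbitrary $\b{w_h}\in\b{{K}_h}$, apply coercivity to $\b{u_h}-\b{w_h}$, use the discrete inequality \eqref{discrete} tested with $\b{v_h}=\b{w_h}$, and insert the identity above with $\b{v_h}=\b{u_h}-\b{w_h}$. After the cancellation of the two $L(\cdot)$ contributions this reduces the error control to
\begin{align*}
\alpha\norm{\b{u_h}-\b{w_h}}_{\b{h}}^2 \le \mathcal{A}_h(\b{u}-\b{w_h},\b{u_h}-\b{w_h}) - \int_{\Gamma_C}\sigma_n(\b{u})\,(u_{hn}-w_{hn})\,\b{ds},
\end{align*}
where the first term is bounded by $\norm{\b{u}-\b{w_h}}_{\b{h}}\norm{\b{u_h}-\b{w_h}}_{\b{h}}$ via boundedness. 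I would then take $\b{w_h}=\b{\mathcal{I}_h}\b{u}$: the edge-average condition built into $\b{\mathcal{I}_h}$ on contact edges gives $\int_e(\b{\mathcal{I}_h}\b{u})_n=\int_e u_n\le 0$, so $\b{\mathcal{I}_h}\b{u}\in\b{{K}_h}$ is admissible, and Lemma \ref{Interpolation1} together with the discrete trace inequality yields $\norm{\b{u}-\b{\mathcal{I}_h}\b{u}}_{\b{h}}\lesssim h^{1+\epsilon}|\b{u}|_{\b{H^{2+\epsilon}}(\O)}$ for each of the three pieces of the DG norm.

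The decisive term is the contact integral $T:=-\int_{\Gamma_C}\sigma_n(\b{u})\,(u_{hn}-(\b{\mathcal{I}_h}\b{u})_n)\,\b{ds}$, which I would split as $-\int_{\Gamma_C}\sigma_n(\b{u})u_{hn}+\int_{\Gamma_C}\sigma_n(\b{u})(\b{\mathcal{I}_h}\b{u})_n$. For the second piece the complementarity relation $\int_{\Gamma_C}\sigma_n(\b{u})u_n=0$ and the edge-average identity $\int_e(\b{\mathcal{I}_h}\b{u})_n=\int_e u_n$ reduce it, after inserting the piecewise-constant projection $\tilde{\pi}_h\sigma_n(\b{u})$, to $\int_{\Gamma_C}(\sigma_n(\b{u})-\tilde{\pi}_h\sigma_n(\b{u}))\big((\b{\mathcal{I}_h}\b{u})_n-u_n\big)$, a higher-order ($\lesssim h^{2+2\epsilon}$) term by a local $L^2$ estimate. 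For the first piece I insert $\tilde{\pi}_h\sigma_n(\b{u})$ again; since its edge value $\frac{1}{h_e}\int_e\sigma_n(\b{u})$ is nonpositive and $\int_e u_{hn}\le 0$ for $e\in\mathcal{E}_h^C$ (discrete admissibility), the contribution $-\int_{\Gamma_C}\tilde{\pi}_h\sigma_n(\b{u})u_{hn}$ is edgewise a product of two nonpositive quantities with a sign that makes it $\le 0$, and can be discarded. The remaining term $-\int_{\Gamma_C}(\sigma_n(\b{u})-\tilde{\pi}_h\sigma_n(\b{u}))u_{hn}$ must \emph{not} be estimated by a crude $L^2$--$L^2$ Cauchy--Schwarz (which loses an $h^{-1/2}$ and gives only $h^{1/2+\epsilon}$); instead I would exploit the regularity $\sigma_n(\b{u})\in H^{\frac12+\epsilon}(\Gamma_C)$ and the negative-norm estimate of Lemma \ref{approx_result} (with $\nu_1=\tfrac12,\ \nu_2=\tfrac12+\epsilon$), giving $\|\sigma_n(\b{u})-\tilde{\pi}_h\sigma_n(\b{u})\|_{(H^{1/2}(\Gamma_C))'}\lesssim h^{1+\epsilon}$, paired against the factor $u_{hn}-u_n$ whose $H^{1/2}$-type norm is controlled, through a trace and scaling argument, by $\norm{\b{u}-\b{u_h}}_{\b{h}}$.

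I expect this contact-boundary estimate to be the main obstacle, on two counts: securing the \emph{optimal} power $h^{1+\epsilon}$ forces the use of the duality/negative-norm bound of Lemma \ref{approx_result} rather than elementary trace estimates, and reconciling the nonconforming discrete constraint $\int_e u_{hn}\le 0$ with the pointwise condition $u_n\le 0$ (so that the sign term can be dropped and the projection error paired edgewise, where $\sigma_n(\b{u})-\tilde{\pi}_h\sigma_n(\b{u})$ has zero mean) requires care with the discontinuity of $u_{hn}$ across contact edges. Granting a bound of the form $T\lesssim h^{1+\epsilon}\big(h^{1+\epsilon}+\norm{\b{u}-\b{u_h}}_{\b{h}}\big)$, I would finish by writing $\norm{\b{u}-\b{u_h}}_{\b{h}}\le\norm{\b{u}-\b{\mathcal{I}_h}\b{u}}_{\b{h}}+\norm{\b{\mathcal{I}_h}\b{u}-\b{u_h}}_{\b{h}}$, absorbing the factor $\norm{\b{\mathcal{I}_h}\b{u}-\b{u_h}}_{\b{h}}$ into the left-hand $\alpha\norm{\b{u_h}-\b{\mathcal{I}_h}\b{u}}_{\b{h}}^2$ by Young's inequality, using $\norm{\b{u}-\b{\mathcal{I}_h}\b{u}}_{\b{h}}\lesssim h^{1+\epsilon}$, and concluding by a final triangle inequality that $\norm{\b{u}-\b{u_h}}_{\b{h}}\lesssim h^{1+\epsilon}$.
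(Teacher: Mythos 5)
Your skeleton coincides with the paper's proof up to reorganization: coercivity applied to $\b{u_h}-\b{\mathcal{I}_h}\b{u}$, the consistency identity for $\b{u}$ (the paper's term $T_2$ after elementwise integration by parts), the discrete inequality tested with the admissible interpolant $\b{\mathcal{I}_h}\b{u}\in\b{{K}_h}$ (the paper's $T_3$), and the contact integral treated by inserting $\tilde{\pi}_h(\sigma_n(\b{u}))$, with the duality bound of Lemma \ref{approx_result} ($\nu_1=\tfrac12$, $\nu_2=\tfrac12+\epsilon$) paired against $\|u^1-u_h^1\|_{H^{1/2}(e)}\lesssim\|u^1-u^1_h\|_{H^1(K)}$ and the sign argument $\tilde{\pi}_h(\sigma_n(\b{u}))\le 0$, $\int_e u^1_h\,ds\le 0$ to discard the projected term, exactly as in \eqref{17}--\eqref{19}. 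One genuine deviation is sound and even slightly cleaner: for the interpolation part of the contact term (the paper's $Q_1$) you use the edge-mean property $\int_e\big((\b{\mathcal{I}_h}\b{u})_n-u_n\big)ds=0$ to reduce to $\int_e(\sigma_n(\b{u})-\tilde{\pi}_h\sigma_n(\b{u}))\big((\b{\mathcal{I}_h}\b{u})_n-u_n\big)ds$ and conclude by an $L^2$--$L^2$ bound ($h^{1/2+\epsilon}\cdot h^{3/2+\epsilon}$ per edge); this avoids the paper's case distinction over the set $\mathcal{J}_1$ and its use of Lemma \ref{discrete1} for $\sigma_n(\b{u})$.

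However, there is a genuine gap in your treatment of the remaining term $-\int_{\Gamma_C}(\sigma_n(\b{u})-\tilde{\pi}_h\sigma_n(\b{u}))u_{hn}\,ds$: you pair the projection error against ``$u_{hn}-u_n$'', which silently discards the correction $-\int_{\Gamma_C}(\sigma_n(\b{u})-\tilde{\pi}_h\sigma_n(\b{u}))u_n\,ds$. By pointwise complementarity $\sigma_n(\b{u})u_n=0$ this correction equals $\int_{\Gamma_C}\tilde{\pi}_h(\sigma_n(\b{u}))u_n\,ds$, which is \emph{nonnegative} (product of two nonpositive factors), so it cannot be dropped by a sign argument; and the generic estimate fails to be optimal: rewriting it edgewise as $\int_e(\tilde{\pi}_h\sigma_n(\b{u})-\sigma_n(\b{u}))(u^1-\tilde{\pi}_h u^1)\,ds$ and applying Cauchy--Schwarz gives only $h^{1/2+\epsilon}\cdot h\,|u^1|_{H^1(e)}$ with $|u^1|_{H^1(e)}=O(1)$, i.e.\ $O(h^{3/2+\epsilon})$ in total, which after absorption would yield only $\norm{\b{u-u_h}}_{\b{h}}\lesssim h^{3/4+\epsilon/2}$. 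This is precisely the hardest step of the paper's proof, handled in \eqref{20}--\eqref{new1}: the sum is restricted to the set $\mathcal{J}_2$ of edges where $u^1$ vanishes at some point (elsewhere $\sigma_n(\b{u})\equiv 0$, hence $\tilde{\pi}_h\sigma_n(\b{u})=0$); on such edges the embedding $u^1\in\mathcal{C}^{1,\epsilon}(\Gamma_C)$ and the constraint $u^1\le 0$ force the tangential derivative $du^1$ to vanish at the maximum point, so Lemma \ref{discrete1} applied to $du^1$ gives $|u^1|_{H^1(e)}\lesssim h^{1/2+\epsilon}\|u^1\|_{H^{3/2+\epsilon}(e)}$, restoring the optimal $O(h^{2(1+\epsilon)})$ bound. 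Without this (or an equivalent) argument your proof does not deliver the claimed rate $h^{1+\epsilon}$.
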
 
\noindent
Before proceeding further to prove Theorem \ref{thm1},  we will recall the following lemma which is the key ingredient in deriving this error estimates (see  \cite[Lemma 8.1 ]{apriori:2001:quad}).
\begin{lemma}\label{discrete1}
Let $\beta \in (\frac{1}{2}, 1]$ and $e \in \mathcal{E}_h^C$,  then for  all $~x_o \in e~$,  the following estimate holds
\begin{align*}
\| \Psi - \Psi(x_o)\|_{L^2(e)} \lesssim h^{\beta} \|\Psi\|_{H^{\beta}(e)} ~\forall~\Psi\in H^{\beta}(e).
\end{align*}
\end{lemma}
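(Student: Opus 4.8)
The plan is to reduce the inequality to a scale-invariant estimate on a reference edge and then account for the powers of $h_e$ produced by the affine change of variables. Note first that since $\beta > \frac12$, the one-dimensional fractional Sobolev space $H^\beta(e)$ embeds continuously into the continuous functions on $e$, so the pointwise value $\Psi(x_o)$ is meaningful and the left-hand side is well defined.

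First I would fix an affine bijection $F : \hat{e} \to e$ from the unit reference edge $\hat{e}=[0,1]$ onto $e$, normalised so that $|F'| = h_e$, and set $\hat{\Psi} = \Psi \circ F$ and $\hat{x}_o = F^{-1}(x_o)$. Under this map the relevant quantities scale cleanly: a direct change of variables gives $\|\Psi - \Psi(x_o)\|_{L^2(e)}^2 = h_e\,\|\hat{\Psi} - \hat{\Psi}(\hat{x}_o)\|_{L^2(\hat{e})}^2$, while substituting in the Gagliardo double integral yields $|\hat{\Psi}|_{H^\beta(\hat{e})}^2 = h_e^{\,2\beta-1}\,|\Psi|_{H^\beta(e)}^2$.

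The heart of the matter is the reference estimate $\|\hat{\Psi} - \hat{\Psi}(\hat{x}_o)\|_{L^2(\hat{e})} \lesssim |\hat{\Psi}|_{H^\beta(\hat{e})}$ with a constant independent of the base point $\hat{x}_o \in \hat{e}$. To establish it I would invoke the fractional Morrey embedding $H^\beta(\hat{e}) \hookrightarrow C^{0,\beta-\frac12}(\hat{e})$, valid precisely because $\beta > \frac12$, in the form that bounds the Hölder seminorm by the $H^\beta$ seminorm, so that $|\hat{\Psi}(x) - \hat{\Psi}(\hat{x}_o)| \lesssim |x - \hat{x}_o|^{\,\beta-\frac12}\,|\hat{\Psi}|_{H^\beta(\hat{e})}$. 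Squaring and integrating over $x \in \hat{e}$ reduces the $L^2$ bound to the elementary integral $\int_{\hat{e}} |x - \hat{x}_o|^{\,2\beta-1}\,dx$, which is finite and bounded uniformly in $\hat{x}_o$ exactly because $2\beta - 1 > 0$; this is the second place where $\beta > \frac12$ is essential.

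Combining the three relations gives $\|\Psi - \Psi(x_o)\|_{L^2(e)}^2 \lesssim h_e \cdot h_e^{\,2\beta-1}\,|\Psi|_{H^\beta(e)}^2 = h_e^{\,2\beta}\,|\Psi|_{H^\beta(e)}^2$, whence $\|\Psi - \Psi(x_o)\|_{L^2(e)} \lesssim h_e^{\,\beta}\,|\Psi|_{H^\beta(e)} \le h^\beta\,\|\Psi\|_{H^\beta(e)}$ after estimating the seminorm by the full norm and using $h_e \le h$. I expect the only genuine obstacle to be securing the reference inequality with a constant uniform over the base point $x_o$; the remaining work is bookkeeping of scaling exponents, and both appearances of the hypothesis $\beta > \frac12$ (continuity of the point evaluation and convergence of the power integral) are transparent.
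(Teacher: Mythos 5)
Your proof is correct, and there is in fact no in-paper argument to compare it against: the paper recalls this lemma without proof, citing \cite[Lemma 8.1]{apriori:2001:quad}, so your write-up supplies the standard argument behind the cited result rather than diverging from anything in the paper. The bookkeeping is right: the change of variables gives $\|\Psi-\Psi(x_o)\|_{L^2(e)}^2 = h_e\|\hat{\Psi}-\hat{\Psi}(\hat{x}_o)\|_{L^2(\hat{e})}^2$ and $|\hat{\Psi}|_{H^{\beta}(\hat{e})}^2 = h_e^{2\beta-1}|\Psi|_{H^{\beta}(e)}^2$, and the power integral $\int_{\hat{e}}|x-\hat{x}_o|^{2\beta-1}\,dx \le 1$ is indeed uniform in $\hat{x}_o$, yielding the stronger seminorm bound $\|\Psi-\Psi(x_o)\|_{L^2(e)} \lesssim h_e^{\beta}|\Psi|_{H^{\beta}(e)}$, which dominates the stated estimate. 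Two points deserve tightening. First, the endpoint $\beta = 1$: the Gagliardo double integral is infinite for nonconstant functions at $\beta=1$, so there the seminorm must be read as $\|\hat{\Psi}'\|_{L^2(\hat{e})}$; your reference estimate then reduces to the fundamental theorem of calculus plus Cauchy--Schwarz, $|\hat{\Psi}(x)-\hat{\Psi}(\hat{x}_o)| \le |x-\hat{x}_o|^{1/2}\|\hat{\Psi}'\|_{L^2(\hat{e})}$, and the scaling identity $|\hat{\Psi}|_{H^1(\hat{e})}^2 = h_e|\Psi|_{H^1(e)}^2$ still matches your exponent $2\beta-1$, so the case folds in cleanly but should be stated separately. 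Second, your Morrey step uses the \emph{seminorm-to-seminorm} form $[\hat{\Psi}]_{C^{0,\beta-1/2}(\hat{e})} \lesssim |\hat{\Psi}|_{H^{\beta}(\hat{e})}$, not merely the full-norm embedding; this is true on an interval (e.g.\ by a Campanato-type dyadic telescoping of local means, with the geometric series converging precisely because $\beta > \tfrac12$), and it is the right form to use, since the full-norm version would break the scale invariance your argument depends on — worth a one-line justification or a precise citation, but not a gap.
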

\begin{proof}[\textbf{Proof of Theorem \ref{thm1}}]
We start with splitting the error into two parts
\begin{align}\label{1}
\norm{\b{u-u_h}}^2_{\b{h}} \lesssim \norm{\b{u-\b{\mathcal{I}_h}\b{u}}}^2_{\b{h}} + \norm{\b{\mathcal{I}_h}\b{u}-\b{u_h}}^2_{\b{h}}.
\end{align}
Using the definition of norm  $\norm{\cdot}_{\b{h}}$,  we have
\begin{align}
\norm{\b{u-\b{\mathcal{I}_h}\b{u}}}^2_{\b{h}} \lesssim \|\b{u-\b{\mathcal{I}_h}\b{u}}\|^2_{\b{h}} +\mid \b{u-\b{\mathcal{I}_h}\b{u}}\mid^2_* + \sum\limits \limits_{e \in \mathcal{E}_h^0}h_e\|\smean{\b{\varepsilon}_h(\b{u-\b{\mathcal{I}_h}\b{u}})}\|^2_{\b{L^2}(e)}
\end{align}

In the view of Lemma \ref{Interpolation1}, discrete trace inequality and the fact that $\sjump{\b{u}-\b{\mathcal{I}_h}\b{u}}=\b{0}$ on interelement boundaries,  we have the following approximation property
 \begin{align}\label{2}
 \norm{\b{u-\b{\mathcal{I}_h}\b{u}}}^2_{\b{h}} \lesssim  h^{2(1+\epsilon)}\|\b{u}\|^2_{\b{H^{2+\epsilon}}(\Omega)}.
\end{align}
In order to bound $\norm{\b{\mathcal{I}_h\b{u}-\b{u_h}}}^2_{\b{h}}$,  we use the stability of the bilinear form $\mathcal{A}_h(\cdot,\cdot) $ with respect to DG norm $\norm{\cdot}_{\b{h}}$ for $\b{v_h} \in \b{{V}_h}$ as follows
\begin{align}\label{8}
\alpha \norm{\b{\mathcal{I}_h}\b{u}-\b{u_h}}^2_{\b{h}}  &\leq \mathcal{A}_h(\b{\mathcal{I}_h}\b{u}-\b{u_h},\b{\mathcal{I}_h}\b{u}-\b{u_h}) \nonumber\\
&= \mathcal{A}_h(\b{\mathcal{I}_h}\b{u}-\b{u},\b{\mathcal{I}_h}\b{u}-\b{u_h})+\mathcal{A}_h(\b{u},\b{\mathcal{I}_h}\b{u}-\b{u_h})-\mathcal{A}_h(\b{u_h},\b{\mathcal{I}_h}\b{u}-\b{u_h}) \nonumber \\
&= T_1 +T_2 +T_3,
\end{align}
where,
\begin{align*}
T_1 &:= \mathcal{A}_h(\b{\mathcal{I}_h}\b{u}-\b{u},\b{\mathcal{I}_h}\b{u}-\b{u_h}), \\
T_2&:= \mathcal{A}_h(\b{u},\b{\mathcal{I}_h}\b{u}-\b{u_h}), \\
T_3&:=-\mathcal{A}_h(\b{u_h},\b{\mathcal{I}_h}\b{u}-\b{u_h}).
\end{align*}
Next,  we  bound $T_1, ~T_2 ~ \text{and} ~T_3$ individually.  Using the continuity of bilinear form $\mathcal{A}_h(\cdot, \cdot)$ and Young's inequality,  we obtain
\begin{align*}
T_1= \mathcal{A}_h(\b{\mathcal{I}_h}\b{u}-\b{u},\b{\mathcal{I}_h}\b{u}-\b{u_h}) &\leq \norm{\b{\mathcal{I}_h}\b{u}-\b{u}}_{\b{h}}\norm{\b{\mathcal{I}_h{u}}-\b{u_h}}_{\b{h}} \\
& \leq \frac{1}{2\alpha} \norm{\b{\mathcal{I}_h}\b{u}-\b{u}}^2_{\b{h}} + \frac{\alpha}{2} \norm{\b{\mathcal{I}_h{u}}-\b{u_h}}^2_{\b{h}},
\end{align*}
where $\alpha > 0$ denotes the $\b{V}$-ellipticity constant.
Finally,  using \eqref{2}, we find 
\begin{align}\label{3}
T_1 \leq  \frac{1}{2\alpha}h^{2(1+\epsilon)}\|\b{u}\|^2_{\b{H^{2+\epsilon}}(\O)} +\frac{\alpha}{2}\norm{\b{\mathcal{I}_h}{\b{u}}-\b{u_h}}^2_{\b{h}}.
\end{align}
Since $\b{u} \in \b{H}^{\b{2+\epsilon}}(\Omega)$,  it results that $\sjump{\b{u}}$ and $\sjump{\b\sigma(\b{u})}$ vanishes on interelement boundaries together with $\smean{\b{\sigma}(\b{u})} = \b{\sigma}(\b{u})$.  Consequently
\begin{align}\label{4}
T_2 = \sum\limits_{K\in \mathcal{T}_h} \int \limits_K \b{\sigma}(\b{u}):\b{\varepsilon}(\b{\b{\mathcal{I}_h}u}-\b{u_h})~\textbf{dx}- \sum\limits_{e \in \mathcal{E}_h^0} \int \limits_e \b{\sigma}_h(\b{u}) : \sjump{\b{\mathcal{I}_h}\b{u}-\b{u_h}}~\textbf{ds}.
\end{align}
Further we use integration by parts to handle the first term of $\eqref{4}$ as follows
\begin{align}\label{5}
\sum\limits_{K\in \mathcal{T}_h} \int_K \b{\sigma}(\b{u}): \b{\varepsilon}(\b{\mathcal{I}_h}\b{u}-\b{u_h})~\textbf{dx} &=  L(\b{\mathcal{I}_h}\b{u}-\b{u_h}) + \sum\limits_{e \in \mathcal{E}_h^0} \int \limits_e \b{\sigma}_h(\b{u}):\sjump{ \b{\mathcal{I}_h}\b{u}-\b{u_h}}~\textbf{ds} \nonumber \\&+\sum\limits_{e \in \mathcal{E}_h^C} \int \limits_e (\b{\sigma}_h(\b{u})\b{n}) \cdot (\b{\mathcal{I}_h}\b{u}-\b{u_h})~\textbf{ds}.
\end{align}
Inserting  \eqref{5} in \eqref{4},  we obtain
\begin{align}\label{6}
T_2 = L(\b{\mathcal{I}_h}\b{u}-\b{u_h})+\sum\limits_{e \in \mathcal{E}_h^C} \int \limits_e (\b{\sigma}_h(\b{u})\b{n}) \cdot (\b{\mathcal{I}_h}\b{u}-\b{u_h})~\textbf{ds}.
\end{align}
A use of discrete variational inequality \eqref{discrete} and the fact that $\b{\mathcal{I}_h}\b{u} \in \b{{K}_h}$, we find
\begin{align}\label{7}
T_3 \leq -L(\b{\mathcal{I}_h}\b{u}-\b{u_h}).
\end{align}
Thus,  using \eqref{8},  \eqref{3},  \eqref{6} and \eqref{7}     together with the decomposition formula \eqref{decomp} and assumption \textbf{(A)},  we find
\begin{align} \label{10}
\frac{\alpha}{2} \norm{\b{\mathcal{I}_h}\b{u}-\b{u_h}}^2_{\b{h}}\leq \frac{1}{2\alpha}h^{2(1+\epsilon)}\|\b{u}\|^2_{\b{H^{2+\epsilon}}(\Omega)} + \sum\limits_{e \in \mathcal{E}_h^C} \int\limits_e \sigma_n(\b{u})({\mathcal{I}_h^1{\b{u}}}-{u^1_h})~ds,
\end{align}
where $\b{\mathcal{I}_h\b{u}} = (\mathcal{I}^1_h{\b{u}}, \mathcal{I}^2_h{\b{u}})$,  each component is defined in a usual way.
Further,  we estimate the second  term of \eqref{10} as follows
\begin{align}\label{11}
\sum\limits_{e \in \mathcal{E}_h^C} \int \limits_e \sigma_n(\b{u})({\mathcal{I}_h^1\b{u}}-{u^1_h})~ds &= \sum\limits_{e \in \mathcal{E}_h^C} \int \limits_e \sigma_n(\b{u})({\mathcal{I}_h^1\b{u}}-{u^1})~ds + \sum\limits_{e \in \mathcal{E}_h^C} \int 
\limits_e \sigma_n(\b{u})({{u^1}}-{u^1_h})~ds \nonumber \\
&= Q_1+Q_2,
\end{align}
where 
\begin{align*}
Q_1 &:=  \sum\limits_{e \in \mathcal{E}_h^C} \int \limits_e \sigma_n(\b{u})({\mathcal{I}_h^1\b{u}}-{u^1})~ds, \\
Q_2&:=  \sum\limits_{e \in \mathcal{E}_h^C} \int \limits_e \sigma_n(\b{u})({{u^1}}-{u^1_h})~ds. 
\end{align*}
Now,  we  estimate $Q_1$ and $Q_2$ one by one.  

As $\b{u}\in\b{H^{2+\epsilon}}(\Omega)$,  the trace of $\b{\sigma}(\b{u})$ belongs to $\b{H^{\frac{1}{2}+\epsilon}}(\Gamma_C)$.  In view of the embedding result, $\b{H^{\frac{1}{2}+\epsilon}}(\Gamma_C)\hookrightarrow [{\mathcal{C}}(\Gamma_C)]^2$, the pointwise values of $\b{\sigma}(\b{u})$ are well defined.  To this end,  consider the set $$\mathcal{J}_1 = \{ e \in \mathcal{E}^C_h : \sigma_n(\b{u})~ \text{vanishes atleast at one point on}~ e~\in \mathcal{E}^C_h \}.$$The summation in $Q_1$ reduces to sum all edges in $\mathcal{J}_1$ because if $\sigma_n(\b{u})>0$ on some edge $\tilde{e} \in \mathcal{E}^C_h$.  Then,  using the fact that  $u_1\sigma_n(\b{u})=0$ on $\Gamma_C$, we obtain $u_1=0$ on that edge $\tilde{e}$,  consequently  $\mathcal{I}_h^1\b{u}=0$ on $\tilde{e}$.  Thus,  we have
\begin{align}\label{12}
Q_1 &=\sum\limits_{e \in \mathcal{J}_1} \int \limits_e \sigma_n(\b{u})({\mathcal{I}_h^1\b{u}}-{u^1})~ds.
\end{align} 
A use of Cauchy-Schwarz inequality yields
\begin{align}\label{13}
Q_1 &\leq \sum\limits_{e\in \mathcal{J}_1} \|\sigma_n(\b{u})\|_{L^2(e)}\|{\mathcal{I}_h^1\b{u}}-{u^1}\|_{L^2(e)} \nonumber \\
&\leq \bigg(\sum\limits_{e\in \mathcal{J}_1} \|\sigma_n(\b{u})\|^2_{L^2(e)}\bigg)^{\frac{1}{2}}\bigg(\sum\limits_{e\in \mathcal{J}_1} \|{\mathcal{I}_h^1\b{u}}-{u^1}\|^2_{L^2(e)}\bigg)^{\frac{1}{2}}.
\end{align}
Further,  a use of discrete trace inequality together with Lemma \ref{Interpolation1} in equation \eqref{13} yields
\begin{align}\label{14}
Q_1 &\leq \bigg(\sum\limits_{e\in \mathcal{J}_1} \|\sigma_n(\b{u})\|^2_{L^2(e)}\bigg)^{\frac{1}{2}}\bigg(\sum\limits_{e\in \mathcal{J}_1} \sum\limits_{K\in\mathcal{T}_e}(h^{-1}_K \|{\mathcal{I}_h^1\b{u}}-{u^1}\|^2_{L^2(K)}+h_K |{\mathcal{I}_h^1\b{u}}-{u^1}|^2_{H^1(K)}\bigg)^{\frac{1}{2}} \nonumber\\
&\lesssim \bigg(\sum\limits_{e\in \mathcal{J}_1} \|\sigma_n(\b{u})\|^2_{L^2(e)}\bigg)^{\frac{1}{2}}h^{\frac{3}{2}+\epsilon}\|\b{u}\|_{\b{H}^{\b{2+\epsilon}}(\Omega)}.
\end{align}
Since for any arbitrary edge $e\in \mathcal{J}_1$,  $\sigma_n(\b{u})$ vanishes at atleast one point on that edge,  a use of Lemma $\ref{discrete1}$ gives
\begin{align}\label{15}
\|\sigma_n(\b{u})\|_{L^2(e)} \lesssim {h_e}^{\frac{1}{2}+\epsilon}\|\sigma_n(\b{u})\|_{H^{\frac{1}{2}+\epsilon}(e)}.
\end{align} 
Inserting \eqref{15} into \eqref{14} and using trace theorem \cite{BScott:2008:FEM} gives
\begin{align}\label{30}
Q_1&\leq \bigg(\sum\limits_{e\in \mathcal{J}_1} h_e^{{1+2\epsilon}}\|\sigma_n(\b{u})\|^2_{H^{\frac{1}{2}+\epsilon}(e)}\bigg)^{\frac{1}{2}}h^{\frac{3}{2}+\epsilon}\|\b{u}\|_{\b{H}^{\b{2+\epsilon}}(\Omega)} \nonumber \\
&\leq h^{\frac{1}{2}+\epsilon}\|\sigma_n(\b{u})\|_{H^{\frac{1}{2}+\epsilon}(\Gamma_C)}h^{\frac{3}{2}+\epsilon}\|\b{u}\|_{\b{H}^{\b{2+\epsilon}}(\Omega)} \nonumber \\
&\lesssim h^{\frac{1}{2}+\epsilon}\|\b{u}\|_{\b{H^{2+\epsilon}}(\Omega)}h^{\frac{3}{2}+\epsilon}\|\b{u}\|_{\b{H^{2+\epsilon}}(\Omega)} \nonumber \\
&\lesssim  h^{2(1+\epsilon)}\|\b{u}\|^2_{\b{H^{2+\epsilon}}(\Omega)}.
\end{align}
\par
\noindent 
Consider 
\begin{align}\label{17}
Q_2 &= \sum\limits_{e\in \mathcal{E}^C_h} \int\limits_e \sigma_n(\b{u})(u^1-u^1_h)~ds \nonumber \\
&=  \sum\limits_{e\in \mathcal{E}^C_h} \int\limits_e \big( \sigma_n(\b{u})-\tilde{\pi}_h(\sigma_n(\b{u})\big)(u^1-u^1_h)~ds + \sum\limits_{e\in \mathcal{E}^C_h} \int\limits_e {\tilde{\pi}_h(\sigma_n(\b{u}))}(u^1-u^1_h)~ds.
\end{align}
Using H\"{o}lder's inequality,  Lemma {\ref{approx_result}}, trace theorem and Young's inequality in the first integral of \eqref{17},   we find
\begin{align}\label{new}
 \sum\limits_{e\in \mathcal{E}^C_h} \int_e \big( \sigma_n(\b{u})&-{\tilde{\pi}_h(\sigma_n(\b{u})}\big)(u^1-u^1_h)~ds \nonumber \\ &\leq\bigg( \sum\limits_{e\in \mathcal{E}^C_h}\|\sigma_n(\b{u})-{\tilde{\pi}_h(\sigma_n(\b{u}))}\|^2_{[H^{\frac{1}{2}}(e)]^{'}}\bigg)^{\frac{1}{2}}\bigg( \sum\limits_{e\in \mathcal{E}^C_h}\|u^1-u^1_h\|^2_{H^{\frac{1}{2}}(e)}\bigg)^{\frac{1}{2}} \nonumber \\
&\leq\bigg( h^{2(1+\epsilon)}\sum\limits_{e\in \mathcal{E}^C_h}\|\sigma_n(\b{u})\|^2_{H^{\frac{1}{2}+\epsilon}(e)}\bigg)^{\frac{1}{2}}\bigg( \sum\limits_{e\in \mathcal{E}^C_h}\sum\limits_{K\in\mathcal{T}_e}\|u^1-u^1_h\|^2_{H^{1}(K)}\bigg)^{\frac{1}{2}} \nonumber \\
&\lesssim h^{1+\epsilon}\|\b{u}\|_{\b{H^{{2+\epsilon}}}(\Omega)}\norm{\b{u-u_h}}_{\b{h}} \nonumber \\
&\leq \frac{1}{2C_1}h^{2(1+\epsilon)}\|\b{u}\|^2_{\b{H^{2+\epsilon}}(\Omega)} + \frac{C_1}{2}\norm{\b{u-u_h}}^2_{\b{h}} \nonumber\\
&\lesssim \frac{1}{2C_1}h^{2(1+\epsilon)}\|\b{u}\|^2_{\b{H^{2+\epsilon}}(\Omega)} +{C_1}h^{2(1+\epsilon)}\|\b{u}\|^2_{\b{H^{2+\epsilon}}(\Omega)}+ C_1\norm{\b{I_hu-u_h}}^2_{\b{h}},
\end{align}
for some infinitesimally small constant $0<C_1<\frac{\alpha}{2}$.  Next,  we consider the second integral in the right hand side of \eqref{17} as follows
\begin{align}\label{18}
\sum\limits_{e\in \mathcal{E}^C_h} \int_e {\tilde{\pi}_h(\sigma_n(\b{u}))}(u^1-u^1_h)~ds = \sum\limits_{e\in \mathcal{E}^C_h} \int_e {\tilde{\pi}_h(\sigma_n(\b{u}))}u^1~ds - \sum\limits_{e\in \mathcal{E}^C_h} \int_e {\tilde{\pi}_h(\sigma_n(\b{u}))}u^1_h~ds.
\end{align}
Further,  using the fact that $\tilde{\pi}_h(\sigma_n(\b{u}))$ is constant on each edge $e \in \mathcal{E}^C_h$ and $\sigma_n(\b{u}) \in C(\Gamma_C) \leq 0 $,  we have $\tilde{\pi}_h(\sigma_n(\b{u})) \leq 0$ on each edge  $e \in \mathcal{E}^C_h$.  Further,  exploiting the condition $\int_e u^1_h~ds \leq 0~\forall~e~\in~\mathcal{E}^C_h$,  we find $$\underset{e\in \mathcal{E}^C_h}{\sum} \int_e {\tilde{\pi}_h(\sigma_n(\b{u}))}u^1_h~ds \geq 0.$$
Thus,  \eqref{18} reduces to 
\begin{align}\label{19}
\sum\limits_{e\in \mathcal{E}^C_h} \int_e {\tilde{\pi}_h(\sigma_n(\b{u}))}(u^1-u^1_h)~ds \leq  \sum\limits_{e\in \mathcal{E}^C_h} \int_e {\tilde{\pi}_h(\sigma_n(\b{u}))}u^1~ds.
\end{align}
Using the fact that $\sigma_n(\b{u})u_n = 0~\text{on}~\Gamma_C$,  the right hand side of \eqref{19} changes to
\begin{align}\label{20}
\sum\limits_{e\in \mathcal{E}^C_h} \int_e {\tilde{\pi}_h(\sigma_n(\b{u}))}(u^1)~ds &= \sum\limits_{e\in \mathcal{E}^C_h} \int_e {(\tilde{\pi}_h(\sigma_n(\b{u})) - \sigma_n(\b{u})})u^1~ds \nonumber \\
&= \sum\limits_{e\in \mathcal{E}^C_h} \int_e {(\tilde{\pi}_h(\sigma_n(\b{u})) - \sigma_n(\b{u})})(u^1 - \tilde{\pi}_h(u^1))~ds .
\end{align}
On the similar lines as in \eqref{12},  we restrict the summation in \eqref{20} to the set $\mathcal{J}_2$ where  $$\mathcal{J}_2=  \{ e \in \mathcal{E}^C_h : u^1~ \text{vanishes atleast at one point on e} \in \mathcal{E}^C_h \}.$$
Finally,  using Lemma \ref{approx_result} we have 
\begin{align}\label{23}
\sum\limits_{e\in \mathcal{E}^C_h} \int_e {\tilde{\pi}_h(\sigma_n(\b{u}))}(u^1)~ds &= \sum\limits_{e\in \mathcal{J}_2} \int_e {(\tilde{\pi}_h(\sigma_n(\b{u})) - \sigma_n(\b{u})})(u^1 - \tilde{\pi}_h(u^1))~ds \nonumber \\
& \leq \sum\limits_{e\in \mathcal{J}_2} \| {\tilde{\pi}_h(\sigma_n(\b{u})) - \sigma_n(\b{u})}\|_{L^2(e)}\|u^1 - \tilde{\pi}_h(u^1)\|_{L^2(e)} \nonumber\\
& \lesssim \sum\limits_{e\in \mathcal{J}_2} h^{\frac{1}{2}+\epsilon} \| \sigma_n(\b{u})\|_{H^{\frac{1}{2}+\epsilon}(e)}h|u^1|_{H^1(e)}.
\end{align}
We have $u^1 \in H^{\frac{3}{2}+\epsilon}(\Gamma_C)$.  A use of Sobolev embedding theorem yields $u^1\in \mathcal{C}^{1,\epsilon}(\Gamma_C)$.  Since $u^1| _{\Gamma_C} \leq 0$ and $u^1$ vanishes at atleast one point on $e\in \mathcal{J}_2$ say $c$,  thus we have $u^1$  has a maxima at the point $c$.  Consequently,  $ {{du^1(c)}} =0$ where $du^1$ refers to the tangential derivative of $u^1$ along the edge $e$.  Utilising Lemma \ref{discrete1},  we find
\begin{align}\label{21}
|u^1|_{H^1(e)} = \|du^1\|_{L^2(e)} & \leq h^{\frac{1}{2}+\epsilon}\|du^1\|_{H^{\frac{1}{2}+\epsilon}(e)}\leq h^{\frac{1}{2}+\epsilon}\|u^1\|_{H^{\frac{3}{2}+\epsilon}(e)}.
\end{align}
Using \eqref{21} in \eqref{23},  we find
\begin{align}\label{new1}
\sum\limits_{e\in \mathcal{E}^C_h} \int_e {\tilde{\pi}_h(\sigma_n(\b{u}))}u^1~ds &\leq \sum\limits_{e\in \mathcal{J}_2} h^{\frac{1}{2}+\epsilon} \| \sigma_n(\b{u})\|_{{H^{\frac{1}{2}+\epsilon}(e)}}h^{\frac{3}{2}+\epsilon}\|u^1\|_{H^{\frac{3}{2}+\epsilon}(e)} \nonumber \\
& \leq h^{2(1+\epsilon)}  \bigg(\sum\limits_{e\in \mathcal{J}_2}\|\sigma_n(\b{u})\|^2_{{H^{\frac{1}{2}+\epsilon}(e)}}\bigg)^{\frac{1}{2}} \bigg(\sum\limits_{e\in \mathcal{J}_2} \|u^1\|^2_{H^{\frac{3}{2}+\epsilon}(e)} \bigg)^{\frac{1}{2}} \nonumber \\
& \leq h^{2(1+\epsilon)} \|\sigma_n(\b{u})\|_{H^{\frac{1}{2}+\epsilon}(\Gamma_C)}\|\b{u}\|_{\b{H^{\frac{3}{2}+\epsilon}}(\Gamma_C)} \nonumber\\
& \leq h^{2(1+\epsilon)} \|\b{u}\|_{\b{H^{\b{2+\epsilon}}}(\Omega)}.
\end{align}
Thus,  combining the equations \eqref{17},  \eqref{new},  \eqref{19} and \eqref{new1},  we obtain
\begin{align}\label{22}
Q_2 \lesssim \frac{1}{2C_1}h^{2(1+\epsilon)}\|\b{u}\|^2_{\b{H^{2+\epsilon}}(\Omega)} +{C_1}h^{2(1+\epsilon)}\|\b{u}\|^2_{\b{H^{2+\epsilon}}(\Omega)}+ C_1\norm{\b{I_hu-u_h}}^2_{\b{h}}.
\end{align} 
Finally,  the result follows by combining \eqref{1},  \eqref{2},  \eqref{10},  \eqref{11}, \eqref{30} and  \eqref{22}.  
\end{proof}
\section{\textbf{Discrete Lagrange Multiplier}}
\noindent
In this section,  we define the discrete counterpart $\b{\lambda_h}$  of Lagrange multiplier $\b{\lambda}$ on a suitable space.  Further we will be deriving the sign property of the discrete Lagrange multiplier which will be crucial in proving the \textit{a posteriori} error estimates. 
\par
\noindent
To this end,  we define an operator $\b{\beta_h}: \b{{V}_h} \longrightarrow \b{{M}_h}(\Gamma_C)$ taking the help of  projection operator $\b{\pi_h}$ as follows
\begin{align}\label{32}
\b{\beta_h}(\b{v_h}):=\b{\pi_h}(\b{\gamma_h}(\b{v_h})), \quad \forall \b{v_h} \in \b{{V}_h}.
\end{align}
Here in equation \eqref{32},  $\b{\gamma_h}: \b{{V}_h} \longrightarrow \b{{W}_h}$ refers to the trace map where
$$\b{{W}_h} := \big\{ \b{v_h} \in \b{L^2}(\Gamma_C) : \b{v_h}|_e \in [P_2(e)]^2~ \forall~e\in \mathcal{E}^C_h~\big\}.$$
For the ease of presentation,  we will be using the following representation of the map $\b{\beta_h}$ given by
\begin{align*}
\b{\beta_h}(\b{v_h}):= (\beta^1_h(\b{v_h}),\beta^2_h(\b{v_h}))~\quad \forall~\b{v_h}= (v^1_h, v^2_h) \in \b{{V}_h},
\end{align*}
where $\beta^1_h(\b{v_h})|_e= \frac{1}{h_e}\int \limits_{e} v_h^1~ds$ and $\beta^2_h(\b{v_h})|_e= \frac{1}{h_e}\int \limits_e v_h^2~ds ~\forall~e\in~\mathcal{E}^C_h.$ 
\vspace{0.4 cm}
\par
\noindent
Next,  we  establish some key properties of the operator $\b{\beta_h}$.
\begin{lemma}\label{onto}
The map $\b{\beta_h} : \b{{V}_h} \longrightarrow \b{{M}_h}(\Gamma_C)$ defined in \eqref{32} is onto.
\end{lemma}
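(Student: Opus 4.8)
The plan is to prove surjectivity of $\b{\beta_h}$ by an explicit construction: given an arbitrary target $\b{w_h} \in \b{{M}_h}(\Gamma_C)$, I will build a preimage $\b{v_h} \in \b{{V}_h}$ satisfying $\b{\beta_h}(\b{v_h}) = \b{w_h}$. Since $\b{w_h}$ is piecewise constant on the contact edges, it suffices on each edge $e \in \mathcal{E}^C_h$ to produce the prescribed value of the edge-average $\frac{1}{h_e}\int_e v_h^i\,ds$ for each component $i=1,2$. I would work one contact triangle at a time, which is legitimate because Assumption \textbf{(B)} guarantees each $K \in \mathcal{T}_h^C$ carries exactly one contact edge, and because $\b{{V}_h}$ is a fully discontinuous space so the degrees of freedom on distinct triangles are completely independent. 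Thus there is no compatibility condition to worry about across elements, and the global surjectivity reduces to a purely local statement on a single triangle.

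The key local step is the following. Fix $K \in \mathcal{T}_h^C$ with contact edge $e$, and let $c \in \mathbb{R}^2$ be the constant value that $\b{w_h}$ takes on $e$. I want $\b{v_h}|_K \in [P_2(K)]^2$ with $\frac{1}{h_e}\int_e v_h^i\,ds = c^i$. Treating each component separately, I would choose a convenient quadratic on $K$ — for instance the component that equals a fixed constant $\alpha$ identically, or one supported via the Lagrange/edge-midpoint basis — and compute its edge-average over $e$. The map sending a quadratic $q \in P_2(K)$ to its normalized edge integral $\frac{1}{h_e}\int_e q\,ds$ is a nonzero linear functional on $P_2(K)$ (it is nonzero because, e.g., the constant function $1$ integrates to $h_e \neq 0$), hence it is surjective onto $\mathbb{R}$; so a preimage of $c^i$ always exists. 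Doing this for both components and assembling over all contact triangles, and setting $\b{v_h}$ to anything admissible (say zero) on the non-contact triangles, yields a global $\b{v_h} \in \b{{V}_h}$ with $\b{\beta_h}(\b{v_h}) = \b{w_h}$, which proves the claim.

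I expect the proof to be essentially routine rather than deep, since surjectivity of an edge-averaging operator from a rich discontinuous quadratic space onto piecewise constants is a dimension-counting/functional-nonvanishing phenomenon. The only points requiring care are the two structural hypotheses: Assumption \textbf{(B)}, which ensures the per-triangle problems decouple cleanly (no triangle has two contact edges forcing two simultaneous average constraints), and the discontinuity of $\b{{V}_h}$, which guarantees the local choices glue into a single-valued element of $\b{{V}_h}$ without interface matching. If one wanted to be fully explicit, the mildly fiddly part is writing down the value of $\frac{1}{h_e}\int_e q\,ds$ for a chosen quadratic basis function — but this is a one-dimensional Simpson-type quadrature computation on the edge and presents no real obstacle. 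I would therefore present the argument as: reduce to one triangle via discontinuity and Assumption \textbf{(B)}; exhibit, for each prescribed constant, a quadratic whose normalized edge-average hits it; assemble globally.
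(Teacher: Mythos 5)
Your proposal is correct and follows essentially the same route as the paper: the paper's proof also exploits Assumption \textbf{(B)} and the full discontinuity of $\b{{V}_h}$ to decouple the problem triangle by triangle, and it realizes the preimage exactly as your ``convenient choice,'' namely taking $\b{v_h}$ to be the constant $(c_i^1, c_i^2)$ on each contact triangle $K_i$ and zero elsewhere, so that the edge-average trivially reproduces $\b{w_h}$. Your extra observation that the normalized edge integral is a nonzero linear functional on $P_2(K)$ is a harmless generalization of the same idea.
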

\begin{proof}

For any $\b{w_h} \in \b{{M}_h}(\Gamma_C)$,  we have $\b{w_h}|_e \in [P_0(e)]^2 ~\forall~ e \in \mathcal{E}^C_h$.  {Concisely,  let $e_i,  1 \leq i \leq k$ be the enumeration of edges on $\Gamma_C$. To this end,  we define $\b{w_h}|_{e_i} =(c^1_i, c^2_i)$ for each edge $e_i \in \mathcal{E}^C_h$. }  Keeping in mind the presumption (\textbf{B}),  for each $e_i \in \mathcal{E}^C_h, 1 \leq i \leq k$  we identify the triangles $K_i \in \mathcal{T}_h$ such that $e_i \in \partial K_i$.  Accordingly,  we define $\b{v_h} \in \b{{V}_h}$ as
\begin{equation}\label{ch:five:sec:5:eq4:1}
\b{v_h} =  \begin{cases}
        (c^1_1,  c^2_1)~& \text{in } ~K_1,  \nonumber\\
        (c^1_2,  c^2_2) ~& \text{in } ~K_2,\nonumber\\
        (c^1_3,  c^2_3)~& \text{in } ~K_3,\nonumber\\
        . ... \nonumber\\
        .... \nonumber\\
        (c^1_k,  c^2_k)~& \text{in } ~K_k,\\
        (0, 0), ~&\text{otherwise.} ~~~~~~~~~~~~~~~~~~~~~~~~~
        \end{cases}
\end{equation}

It can be observed that $\b{\beta_h}(\b{v_h}) = \b{\pi_h}(\b{\gamma_h}(\b{v_h})) = \b{w_h}$.  Thus,  the map $\b{\beta_h}$ is onto.
\end{proof}
\begin{remark}
The surjective map $\b{\beta_h}: \b{{V}_h} \longrightarrow \b{{M}_h}(\Gamma_C)$ ensures a continuous right inverse $\b{\beta^{-1}_h}: \b{{M}_h}(\Gamma_C) \longrightarrow \b{{V}_h}$ defined by $\b{\beta_h^{-1
}}(\b{w_h}) = \b{v_h}$,   where  $\b{v_h}$ is defined in  Lemma \ref{onto}.
\end{remark}
\par
\noindent
With the assistance of map $\b{\beta_h}$,  we define discrete Lagrange multiplier $\b{\lambda_h}\in \b{{M}_h}(\Gamma_C)$  as
\begin{align}\label{33}
\b{(\b{\lambda_h},\b{w_h})_{L^2(\Gamma_C)}} := {L}(\b{\beta_h^{-1}}\b{w_h}) - \mathcal{A}_h(\b{u_h}, \b{\beta^{-1}_h}\b{w_h})~~\quad\forall~\b{w_h}\in \b{{M}_h}(\Gamma_C).
\end{align}
\begin{remark}
Since $\b{\lambda_h} \in \b{L^2}(\Gamma_C)$,  using  Cauchy-Schwarz  inequality,  it can be viewed as a functional on $\b{H^{\frac{1}{2}}}(\Gamma_C)$ as follows
\begin{align*}
\langle\b{ \lambda_h},  \b{w} \rangle_c := \int_{\Gamma_C} \b{ \lambda_h} \cdot \b{w}~\textbf{ds} ~\forall~\b{w}\in \b{H^{\frac{1}{2}}}(\Gamma_C).
\end{align*}
\end{remark} 
\noindent
In order to carry out further analysis,  we work with an additional functional space $\b{{V}_Q}$ defined as
\begin{align}\label{Vq space}
\b{{V}_Q}=\bigg\{\b{v_h} \in \b{{V}_h}:~{\beta^1_h(\b{v_h)}=0}\bigg\}.
\end{align} 
\begin{remark}
It can be observed that $\b{v_h} = \b{u_h} \pm \b{v} \in \b{{K}_h}$ for $\b{v} \in \b{{V}_Q}$.  Thus,  a use of \eqref{discrete} yields
\begin{align}\label{equality11}
\mathcal{A}_h(\b{u_h}, \b{v}):={L}(\b{v})~\forall~\b{v}~\in~\b{{V}_Q}. 
\end{align}
\end{remark}
\noindent
The choice of the map $\b{\lambda_h}$ plays a key role in establishing further estimates.  In the upcoming lemma,  we will show the well definedness of the map $\b{\lambda_h}$ taking the help of the functional space $\b{{V}_Q}$.
\begin{lemma}
The map $\b{\lambda_h} \in \b{{M}_h(\Gamma_C)}$ defined in \eqref{33} is well defined.
\end{lemma}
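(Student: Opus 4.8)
The plan is to show that the value produced by the right-hand side of \eqref{33} depends only on $\b{w_h}$ and not on the particular preimage chosen under $\b{\beta_h}$, and then to recover $\b{\lambda_h}$ as the Riesz representative of the resulting linear functional in the finite-dimensional space $\b{{M}_h}(\Gamma_C)$. First I would record two elementary facts. The operator $\b{\beta_h}$ is linear, since each component $\beta^i_h(\b{v_h})|_e=\frac{1}{h_e}\int_e v^i_h~ds$ is linear in $\b{v_h}$; hence $\ker\b{\beta_h}$ is a subspace of $\b{{V}_h}$. Moreover, because $\beta^1_h$ is exactly the first component of $\b{\beta_h}$, any $\b{z}\in\ker\b{\beta_h}$ satisfies $\beta^1_h(\b{z})=0$ and therefore lies in $\b{{V}_Q}$, giving the inclusion $\ker\b{\beta_h}\subseteq\b{{V}_Q}$.

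The main step is the preimage-independence. Fix $\b{w_h}\in\b{{M}_h}(\Gamma_C)$ and let $\b{v_h},\tilde{\b{v}}_h\in\b{{V}_h}$ be any two elements with $\b{\beta_h}(\b{v_h})=\b{\beta_h}(\tilde{\b{v}}_h)=\b{w_h}$; such elements exist by the surjectivity established in Lemma \ref{onto}. Setting $\b{z}:=\b{v_h}-\tilde{\b{v}}_h$, linearity of $\b{\beta_h}$ gives $\b{z}\in\ker\b{\beta_h}\subseteq\b{{V}_Q}$. The relation \eqref{equality11} then yields $\mathcal{A}_h(\b{u_h},\b{z})=L(\b{z})$, so that, using linearity of $L$ and of $\mathcal{A}_h(\b{u_h},\cdot)$,
\begin{align*}
\big(L(\b{v_h})-\mathcal{A}_h(\b{u_h},\b{v_h})\big)-\big(L(\tilde{\b{v}}_h)-\mathcal{A}_h(\b{u_h},\tilde{\b{v}}_h)\big)=L(\b{z})-\mathcal{A}_h(\b{u_h},\b{z})=0.
\end{align*}
Consequently the quantity $L(\b{\beta_h^{-1}}\b{w_h})-\mathcal{A}_h(\b{u_h},\b{\beta_h^{-1}}\b{w_h})$ is unambiguous, depending on $\b{w_h}$ alone and not on the chosen right inverse; I would denote this common value by $F(\b{w_h})$.

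It then remains to pass from the functional $F$ to the element $\b{\lambda_h}$. The map $\b{w_h}\mapsto F(\b{w_h})$ is linear (the sum of two preimages is a preimage of the sum, and $L$, $\mathcal{A}_h(\b{u_h},\cdot)$ are linear), and in the finite-dimensional inner-product space $\big(\b{{M}_h}(\Gamma_C),(\cdot,\cdot)_{\b{L^2}(\Gamma_C)}\big)$ every linear functional admits a unique representative. Thus there is a unique $\b{\lambda_h}\in\b{{M}_h}(\Gamma_C)$ with $(\b{\lambda_h},\b{w_h})_{\b{L^2}(\Gamma_C)}=F(\b{w_h})$ for all $\b{w_h}\in\b{{M}_h}(\Gamma_C)$, which is precisely \eqref{33}; this establishes that $\b{\lambda_h}$ is well defined. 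The only genuinely delicate point is the preimage-independence of the second step, and its crux is the correct invocation of \eqref{equality11}, which itself rests on the admissibility $\b{u_h}\pm\b{v}\in\b{{K}_h}$ for $\b{v}\in\b{{V}_Q}$ together with the discrete variational inequality \eqref{discrete}; the remaining finite-dimensional Riesz argument is routine.
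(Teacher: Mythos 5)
Your proof is correct and follows essentially the same route as the paper: both reduce well-definedness to the observation that any two preimages under $\b{\beta_h}$ differ by an element of $\ker\b{\beta_h}\subseteq\b{{V}_Q}$, on which $L(\cdot)-\mathcal{A}_h(\b{u_h},\cdot)$ vanishes by \eqref{equality11}. Your concluding Riesz-representation step in the finite-dimensional space $\b{{M}_h}(\Gamma_C)$ simply makes explicit the existence of $\b{\lambda_h}$, which the paper leaves implicit.
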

\begin{proof}
Let $\b{w_h}$,  $\b{q_h} \in \b{{M}_h}(\b{\Gamma_C)}$ be such that $\b{w_h}=\b{q_h}$.  Then, there exist $\b{\tilde{w_h}}$, $\b{\tilde{q_h}} \in \b{{V}_h}$ such that $\b{\beta_h(\tilde{w_h})}=\b{w_h}$ with $\b{\tilde{w_h}} = \b{\beta_h^{-1}}(\b{w_h}) $ and $\b{\beta_h(\tilde{q_h})}=\b{q_h}$ with $\b{\tilde{q_h}} = \b{\beta_h^{-1}}(\b{q_h}) $.  Thus, $\b{\beta_h}(\tilde{\b{w_h}}-\tilde{\b{q_h}})=\b{0}$ which implies $\tilde{\b{w_h}}-\tilde{\b{q_h}} \in \b{{V}_Q}$ using $\eqref{Vq space}$.   Further,  a use of equation $\eqref{equality11}$ yields
\begin{align*}
&\mathcal{A}_h(\b{u_h},  \tilde{\b{w_h}}-\tilde{\b{q_h}}) - {L}(\tilde{\b{w_h}}-\tilde{\b{q_h}})=\b{0}. 
\end{align*}
Thus,  we have $\b{(\b{\lambda_h},\b{w_h})_{L^2(\Gamma_C)}} = \b{(\b{\lambda_h},\b{q_h})_{L^2(\Gamma_C)}}$.
\end{proof}
\begin{lemma}\label{eqn2}
For any $\b{v_h} \in \b{{V}_h}$,  the following results  hold
\begin{align}\label{eqn1}
(\b{\lambda_h}, \b{v_h})_{\b{L^2(\Gamma_C)}} = L(\b{v_h}) - \mathcal{A}_h(\b{u_h}, \b{v_h}).
\end{align}
\end{lemma}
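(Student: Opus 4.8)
The plan is to exploit the piecewise-constant structure of $\b{\lambda_h}$ together with the defining relation \eqref{33} and the orthogonality relation \eqref{equality11} on the space $\b{{V}_Q}$. The argument proceeds in three moves: first I replace the trace of $\b{v_h}$ by its edgewise average $\b{\beta_h}(\b{v_h})$ inside the $\b{L^2}(\Gamma_C)$-pairing; then I invoke the definition of $\b{\lambda_h}$ on the resulting element of $\b{{M}_h}(\Gamma_C)$; and finally I use that $\b{v_h}$ and $\b{\beta_h^{-1}}(\b{\beta_h}(\b{v_h}))$ differ by an element of $\b{{V}_Q}$ to transfer the residual functional from $\b{\beta_h^{-1}}(\b{\beta_h}(\b{v_h}))$ back to $\b{v_h}$.

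First I would establish the projection identity
\begin{align*}
(\b{\lambda_h}, \b{v_h})_{\b{L^2(\Gamma_C)}} = (\b{\lambda_h}, \b{\beta_h}(\b{v_h}))_{\b{L^2(\Gamma_C)}} \quad \forall~\b{v_h}\in\b{{V}_h}.
\end{align*}
This rests on the fact that $\b{\pi_h}$, and hence $\b{\beta_h}=\b{\pi_h}\circ\b{\gamma_h}$, realizes the $\b{L^2}(\Gamma_C)$-orthogonal projection onto edgewise constants: on each $e\in\mathcal{E}_h^C$ the definition \eqref{pi_h} gives $\int_e(\b{\gamma_h}(\b{v_h})-\b{\beta_h}(\b{v_h}))~\textbf{ds}=\b{0}$, and since $\b{\lambda_h}|_e$ is constant, the contribution of $\b{\gamma_h}(\b{v_h})-\b{\beta_h}(\b{v_h})$ to $\int_e\b{\lambda_h}\cdot(\cdot)~\textbf{ds}$ vanishes. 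Summing over $e\in\mathcal{E}_h^C$ produces the identity.

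Next, setting $\b{w_h}:=\b{\beta_h}(\b{v_h})\in\b{{M}_h}(\Gamma_C)$ and applying the definition \eqref{33} of $\b{\lambda_h}$,
\begin{align*}
(\b{\lambda_h}, \b{\beta_h}(\b{v_h}))_{\b{L^2(\Gamma_C)}} = L(\b{\beta_h^{-1}}\b{w_h}) - \mathcal{A}_h(\b{u_h}, \b{\beta_h^{-1}}\b{w_h}).
\end{align*}
Because $\b{\beta_h}\circ\b{\beta_h^{-1}}$ is the identity on $\b{{M}_h}(\Gamma_C)$, one has $\b{\beta_h}(\b{v_h}-\b{\beta_h^{-1}}\b{w_h})=\b{\beta_h}(\b{v_h})-\b{w_h}=\b{0}$, so in particular $\beta^1_h(\b{v_h}-\b{\beta_h^{-1}}\b{w_h})=0$ and hence $\b{v_h}-\b{\beta_h^{-1}}\b{w_h}\in\b{{V}_Q}$ by \eqref{Vq space}. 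Applying \eqref{equality11} to this element and using the linearity of $L(\cdot)$ and of $\mathcal{A}_h(\b{u_h},\cdot)$ yields
\begin{align*}
L(\b{\beta_h^{-1}}\b{w_h}) - \mathcal{A}_h(\b{u_h}, \b{\beta_h^{-1}}\b{w_h}) = L(\b{v_h}) - \mathcal{A}_h(\b{u_h}, \b{v_h}),
\end{align*}
and chaining the three displays delivers \eqref{eqn1}.

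The computation is short once the two earlier ingredients are in place, so there is no serious analytic obstacle; the point demanding the most care is the first reduction. One must recognize that the mismatch between the piecewise-quadratic trace $\b{\gamma_h}(\b{v_h})$ and the piecewise-constant $\b{\lambda_h}$ is annihilated precisely because $\b{\beta_h}$ encodes the edge-average projection $\b{\pi_h}$, and then that the leftover $\b{v_h}-\b{\beta_h^{-1}}\b{w_h}$ lands in $\b{{V}_Q}$ so that \eqref{equality11} is applicable. Both rely on Assumption \textbf{(A)} and on the well-definedness of $\b{\lambda_h}$ established above.
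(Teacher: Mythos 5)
Your proof is correct and takes essentially the same route as the paper's: the edge-average identity $(\b{\lambda_h},\b{v_h})_{\b{L^2}(\Gamma_C)}=(\b{\lambda_h},\b{\beta_h}(\b{v_h}))_{\b{L^2}(\Gamma_C)}$ (the paper's relation $\int_e(\b{\beta_h}(\b{v_h})-\b{v_h})\,\b{ds}=\b{0}$ paired with the edgewise constancy of $\b{\lambda_h}$), followed by the definition \eqref{33} applied to $\b{w_h}=\b{\beta_h}(\b{v_h})$, and the observation that $(\b{\beta_h^{-1}}\circ\b{\beta_h})(\b{v_h})-\b{v_h}\in\b{{V}_Q}$ so that \eqref{equality11} transfers the residual functional to $\b{v_h}$. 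Your write-up is, if anything, slightly more explicit than the paper in verifying membership in $\b{{V}_Q}$ via $\beta^1_h(\b{v_h}-\b{\beta_h^{-1}}\b{w_h})=0$, so no changes are needed.
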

\begin{proof}
The construction of map $\b{\beta_h}$ ensures that 
\begin{align}\label{equality}
\int \limits_e\big(\b{\beta_h}(\b{v_h}) -\b{v_h}\big)~\textbf{ds} =\b{0}~\forall~e~\in~\mathcal{E}^C_h.
\end{align}
We have, 
\begin{align}\label{eqn}
(\b{\lambda_h}, \b{v_h})_{\b{L^2(\Gamma_C)}} &= (\b{\lambda_h}, \b{\beta_h(v_h)})_{\b{L^2(\Gamma_C)}} \nonumber\\
&= L\big((\b{\beta^{-1}_h\circ\b{\beta_h}})(\b{v_h})\big)-\mathcal{A}_h(\b{u_h}, ( \b{\beta^{-1}_h}\circ\b{\beta_h})(\b{v_h})) \nonumber \\
&= L\big((\b{\beta^{-1}_h\circ\b{\beta_h)}}(\b{v_h}))-\mathcal{A}_h(\b{u_h},  (\b{\beta^{-1}_h}\circ\b{\beta_h})(\b{v_h})) -L(\b{v_h}) + \mathcal{A}_h(\b{u_h}, \b{v_h})\nonumber \\&~~~+L(\b{v_h}) - \mathcal{A}_h(\b{u_h}, \b{v_h}) \nonumber \\
&= L\big( (\b{\beta^{-1}_h\circ\b{\beta_h}})(\b{v_h}) - \b{v_h}\big) - \mathcal{A}_h(\b{u_h},  (\b{\beta^{-1}_h}\circ\b{\beta_h})(\b{v_h})-\b{v_h})+L(\b{v_h}) - \mathcal{A}_h(\b{u_h}, \b{v_h}).
\end{align}
Since $(\b{\beta^{-1}_h}\circ\b{\beta_h})(\b{v_h})-\b{v_h}\in \b{{V}_Q}$,  therefore using \eqref{equality11},  we  conclude \eqref{eqn1}.
\end{proof}
\par
\noindent
Further,  in view of Lemma \ref{eqn2},  for $\b{v_h}=(v^1_h, v^2_h) \in \b{{V}_h}$,  we give another representation of $(\b{\lambda_h}, \b{\beta_h}(\b{v_h}))_{\b{L^2}(\Gamma_C)} $ as 
\begin{align}\label{represent}
(\b{\lambda_h}, \b{\beta_h(v_h}))_{\b{L^2}(\Gamma_C)} = ({\lambda^1_h},  \beta^1_h(\b{v_h}))_{{L^2}(\Gamma_C)}
+ ({\lambda^2_h}, \beta^2_h(\b{v_h}))_{{L^2}(\Gamma_C)},
\end{align}
where,
\begin{align*}
({\lambda^1_h},  \beta^1_h(\b{v_h}))_{{L^2(\Gamma_C)}}&:= L((v^1_h,0)) -\mathcal{A}_h(\b{u_h}, (v^1_h,0)),\\ 
({\lambda^2_h},  \beta^2_h(\b{v_h}))_{{L^2(\Gamma_C)}}&:= L((0,  v^2_h)) -\mathcal{A}_h(\b{u_h}, (0, v^2_h)).
\end{align*}
In the next lemma,  we derive the sign property of discrete Lagrange multiplier $\b{\lambda_h}=(\lambda^1_h, \lambda^2_h)$. 
\begin{lemma}\label{sign3}
It holds that 
\begin{align*}
\begin{split}
\begin{aligned}
\lambda^1_h|_{e} &\geq 0\\
\lambda^2_h|_{e} &= 0 
\end{aligned} \quad \forall~e\in \mathcal{E}^C_h . 
\end{split}
\end{align*}
\begin{proof}
The proof of the lemma can be accomplished by  constructing a suitable test function $\b{v_h}\in \b{{V}_h}$.  To this end,   choose any arbitrary edge $e \in \mathcal{E}^C_h$. Let $p \in \mathcal{V}_e \cup \mathcal{M}_e$ be an arbitrary node.
Construct $\b{v_h} \in \b{{V}_h}$ such that $\b{v_h}=0~ \text{on}~ \Omega\setminus \bar{e}$ and 
\begin{align*}
\begin{split}
\begin{aligned}
\b{v_h}(z) = \begin{cases} &(1,0)~~~\text{if}~ z=p, \\
&(0,~0)~~~\text{if}~z\in (\mathcal{V}_e \cup \mathcal{M}_e)\setminus p. \end{cases}
\end{aligned}
\end{split}
\end{align*}
It can be verified that $\b{u_h}-\b{v_h}\in \b{{K}_h}$. Thus,  using the discrete variational inequality \eqref{discrete},  we find
\begin{align}\label{eqn3}
\mathcal{A}_h(\b{u_h}, \b{v_h}) \leq {L}(\b{v_h}).
\end{align}
which implies $(\b{\lambda_h}, \b{\b{v_h}})_{\b{L^2}(\Gamma_C)}\geq \b{0}$.  On one hand,  we have
\begin{align} \label{lam11}
(\b{\lambda_h}, \b{\beta_h(\b{v_h}}))_{\b{L^2}(\Gamma_C)} &=(\lambda^1_h,  \beta^1_h(\b{v_h}))_{L^2(\Gamma_C)} +  (\lambda^2_h,  \beta^2_h(\b{v_h}))_{L^2(\Gamma_C)} \nonumber \\
&= \int_e\lambda^1_h  \beta^1_h(\b{v_h})~ds + \int_e\lambda^2_h  \beta^2_h(\b{v_h})~ds\nonumber \\
&=  \int_e\lambda^1_h  \beta^1_h(\b{v_h})~ds, 
\end{align}
thus,  in view of equation \eqref{equality}, Lemma \ref{eqn2},  equation \eqref{eqn3} together with equation \eqref{lam11} we find
\begin{align}\label{Eqn4}
(\b{\lambda_h}, \b{\beta_h(\b{v_h}}))_{\b{L^2}(\Gamma_C)} = \int\limits_e\lambda^1_h  \beta^1_h(\b{v_h})~ds \geq 0.
\end{align}
Taking into the account that  the quantity $\lambda^1_h \beta^1_h(\b{v_h}) $ is constant on edge $e$ and $ \beta^1_h(\b{v_h}) =\frac{1}{h_e} \int \limits_e v_h^1~ds > 0$,  we find $\lambda^1_h|_e > 0$.  On the similar lines,  we derive that  $\lambda^2_h|_e = 0$.  For that,  we work with the test function $\b{v_h} \in \b{ {V}_h}$ as
\begin{align*}
\begin{split}
\begin{aligned}
\b{v_h}(z) = \begin{cases} &(0, 1) ~~~\text{if}~ z=p, \\
&(0,0)~~~~\text{if}~z\in (\mathcal{V}_e \cup \mathcal{M}_e)\setminus p. \end{cases}
\end{aligned}
\end{split}
\end{align*}
Observe that $\b{u_h}\pm\b{v_h}\in \b{{K}_h}$ which in turn shows that
\begin{align}\label{equalll}
\mathcal{A}_h(\b{u_h}, \b{v_h}) = {L}(\b{v_h})
\end{align}
using discrete variational inequality \eqref{discrete}.  Thus, Lemma \ref{eqn2} assures $(\b{\lambda_h}, \b{\beta_h(\b{v_h}}))_{\b{L^2}(\Gamma_C)} = \b{0}$.  Also,    the construction of test function $\b{v_h}$ gives $$(\b{\lambda_h}, \b{\beta_h(\b{v_h}}))_{\b{L^2}(\Gamma_C)} =  \int_e\lambda^2_h  \beta^2_h(\b{v_h})~ds =   \lambda^2_h|_e \int \limits_e v^2_h~ds.$$
Thus,  we have $\lambda^2_h|_e=0$ since $ \int \limits_e v_h^2~ds > 0$.  This completes the proof of this lemma.
\end{proof}
\end{lemma}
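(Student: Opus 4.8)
The plan is to establish both sign properties by the same mechanism: for a fixed contact edge $e \in \mathcal{E}^C_h$ and node $p \in \mathcal{V}_e \cup \mathcal{M}_e$, I would build a localized quadratic test function $\b{v_h} \in \b{{V}_h}$, supported on the single element carrying $e$, that isolates one displacement component; feed $\b{u_h}\mp\b{v_h}$ into the discrete variational inequality \eqref{discrete}; and then translate the resulting inequality (resp. equality) into information about $\lambda^1_h|_e$ (resp. $\lambda^2_h|_e$) using Lemma \ref{eqn2}, identity \eqref{equality}, and the componentwise representation \eqref{represent}. The asymmetry between the two claims will come entirely from the fact that, under assumption \textbf{(A)}, the admissibility constraint defining $\b{{K}_h}$ involves only the first component.

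For the first assertion, I would take $\b{v_h}$ with second component identically zero and first component equal to the $P_2$ nodal basis bump at $p$ on the triangle $K$ with $e \subset \partial K$ (which is the unique contact triangle for that edge by assumption \textbf{(B)}), and $\b{v_h}\equiv \b{0}$ on every other element. The elementary but essential computation is that $\int_e v^1_h\,ds$ equals $\tfrac{1}{6}h_e$ or $\tfrac{2}{3}h_e$ according as $p$ is a vertex or the midpoint of $e$, hence is \emph{strictly positive}. Therefore $\int_e (u^1_h - v^1_h)\,ds \le \int_e u^1_h\,ds \le 0$, while $\b{v_h}$ vanishes on all other contact edges, so $\b{u_h}-\b{v_h}\in\b{{K}_h}$. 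Using this in \eqref{discrete} gives $\mathcal{A}_h(\b{u_h},\b{v_h}) \le L(\b{v_h})$, and Lemma \ref{eqn2} then yields $(\b{\lambda_h},\b{v_h})_{\b{L^2}(\Gamma_C)} \ge 0$. Passing to $\b{\beta_h}(\b{v_h})$ via \eqref{equality} (legitimate because $\b{\lambda_h}$ is piecewise constant) and invoking \eqref{represent} collapses this to $\int_e \lambda^1_h\,\beta^1_h(\b{v_h})\,ds \ge 0$, since $v^2_h\equiv 0$ forces $\beta^2_h(\b{v_h})=0$. As $\lambda^1_h$ and $\beta^1_h(\b{v_h})=\tfrac{1}{h_e}\int_e v^1_h\,ds>0$ are both constant on $e$, this forces $\lambda^1_h|_e \ge 0$.

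For the second assertion I would swap the components, taking $\b{v_h}$ with vanishing first component and second component equal to the bump at $p$. Now $\pm\b{v_h}$ leaves the first-component integral constraint untouched, so \emph{both} $\b{u_h}+\b{v_h}$ and $\b{u_h}-\b{v_h}$ lie in $\b{{K}_h}$; testing \eqref{discrete} against each produces two opposite inequalities, hence the equality $\mathcal{A}_h(\b{u_h},\b{v_h}) = L(\b{v_h})$, and Lemma \ref{eqn2} gives $(\b{\lambda_h},\b{v_h})_{\b{L^2}(\Gamma_C)} = 0$. The same reduction through \eqref{equality} and \eqref{represent} (with $\beta^1_h(\b{v_h})=0$ this time) collapses this to $\lambda^2_h|_e\int_e v^2_h\,ds = 0$ with $\int_e v^2_h\,ds>0$, giving $\lambda^2_h|_e = 0$.

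The step I expect to be the main obstacle is the admissibility verification, namely checking rigorously that these localized test functions remain in $\b{{K}_h}$: this is precisely where assumption \textbf{(A)} is needed so that the constraint reduces to the first component, and where assumption \textbf{(B)} is needed so that the single-element support of $\b{v_h}$ cannot spoil the integral constraints on neighbouring contact edges, together with the sign pinning of $\int_e v^1_h\,ds$. Once admissibility and this strict positivity are in hand, the remainder is a formal manipulation of the bilinear form through Lemma \ref{eqn2}.
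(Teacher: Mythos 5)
Your proposal is correct and follows essentially the same route as the paper's own proof: the same localized $P_2$ nodal bump at a node $p\in\mathcal{V}_e\cup\mathcal{M}_e$ with one component isolated, one-sided admissibility $\b{u_h}-\b{v_h}\in\b{{K}_h}$ for the first component versus two-sided admissibility $\b{u_h}\pm\b{v_h}\in\b{{K}_h}$ for the second, then Lemma \ref{eqn2}, identity \eqref{equality}, and the componentwise representation \eqref{represent} to collapse everything to the constant $\lambda^i_h|_e$ against $\beta^i_h(\b{v_h})>0$. Your explicit computation $\int_e v^1_h\,ds\in\{h_e/6,\,2h_e/3\}$ and the admissibility check on neighbouring contact edges simply make precise what the paper leaves at ``it can be verified,'' and your conclusion $\lambda^1_h|_e\geq 0$ is the correct one (the paper's intermediate claim $\lambda^1_h|_e>0$ is a slip, since the inequality \eqref{Eqn4} only yields nonnegativity, consistent with the lemma statement).
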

\par
\noindent
Next,  we decompose our contact edges in two sets as  the discrete contact set $\b{\mathcal{C}_h}$ and discrete non-contact set  $\b{\mathcal{N}_h}$ as follows
\begin{align*}
\b{\mathcal{C}_h} =\bigg\{ e \in \mathcal{E}_h^C:  ~~\int_eu^1_h~ds =0\bigg\} ,\\
\b{\mathcal{N}_h} =\bigg\{ e \in \mathcal{E}_h^C:  ~~\int_eu^1_h~ds < 0\bigg\}. 
\end{align*}
\begin{remark}\label{rem7}
It is worth mentioning that $\lambda^1_h|_e = 0 ~\forall~e\in ~\b{\mathcal{N}_h}$ as for the edge $e \in \b{\mathcal{N}_h}$. To realize this,  let $p \in \mathcal{V}_e \cup \mathcal{M}_e$ be an arbitrary node.  For sufficiently small $\delta>0$,  define  
\begin{align*}
\b{v_h} = \b{u_h} \pm \delta\phi_p\b{e_1}, 
\end{align*}
where $\phi_p$ refers to the $P_2$ Lagrange basis function corresponding to the node $p$.  Observe that $\b{u_h} \pm \b{v_h} \in \b{{K}_h}$. Thus,  $(\b{\lambda_h}, \b{\beta_h(\b{v_h}}))_{\b{L^2}(\Gamma_C)} = L(\b{v_h}) - \mathcal{A}_h(\b{u_h}, \b{v_h}) = 0$.  Using the similar arguments used to prove Lemma \ref{represent},  we note that $\lambda^1_h|_e = 0 ~\forall~e~\in \b{\mathcal{N}_h}$. 
\end{remark}
\subsection{Enriching Map}
\noindent
In the \textit{a posteriori} error analysis of non-conforming methods and discontinuous Galerkin methods, enriching map plays a vital role as it is continuous approximation of discontinuous functions  \cite{Brenner::2004:Korn, Brenner:2001:Poincare}.  In the present context,  we construct enriching map $\b{E_h} :  \b{{V}_h} \longrightarrow \b{{V}_c}$ using the technique of averaging as follows. Let $ \b{v_h} \in \b{{V}_h} $,
\begin{itemize}
\item For $p \in \mathcal{V}^D_h  \cup \mathcal{M}^D_h$,  we define  $\b{E_h} \b{v_h}(p) = 0$.  
\vspace{0.2 cm}
\item For $p \in (\mathcal{V}_h  \cup \mathcal{M}_h) \backslash (\mathcal{V}_h^D  \cup \mathcal{M}_h^D)$
\begin{align*}
\b{E_h} \b{v_h}(p) = \dfrac{1}{|\mathcal{T}_p|} \sum\limits_{K \in \mathcal{T}_p} \b{v_h}|_K(p). 
\end{align*}
\end{itemize}

\vspace{0.3 cm}
\par
\noindent
Next,  let us recollect approximation properties of enriching map which can be proved using the scaling arguments \cite{TG:2016:VIDG1}.
\begin{lemma}\label{enriching}
Let $\b{v_h} \in \b{{V}_h}$.  It hold that
\begin{align*}
\sum\limits_{K \in \mathcal{T}_h} h^{-2}_K \|\b{E_h}\b{v_h} - \b{v_h}\|^2_{\b{L^2}(K)} &\lesssim  \sum\limits_{e \in \mathcal{E}^0_h} \frac{1}{h_e}\| \sjump{\b{v_h}} \|^2_{\b{L^2}(e)}, \\
\sum\limits_{K \in \mathcal{T}_h} \|\nabla(\b{E_h}\b{v_h} - \b{v_h})\|^2_{\b{L^2}(K)} &\lesssim  \sum\limits_{e \in \mathcal{E}^0_h} \frac{1}{h_e}\| \sjump{\b{v_h}} \|^2_{\b{L^2}(e)}.
\end{align*} 
\end{lemma}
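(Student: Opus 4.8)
The plan is to prove both bounds by a standard scaling-and-telescoping argument, exploiting that on each element the error $\b{E_h}\b{v_h}-\b{v_h}$ is a polynomial whose size is controlled by its nodal values, and that those nodal values differ across elements only through the jumps of $\b{v_h}$. First I would note that on any $K\in\mathcal{T}_h$ the difference $\b{E_h}\b{v_h}-\b{v_h}$ lies in $[P_2(K)]^2$, since both terms restrict to quadratic polynomials on $K$. Passing to the reference triangle and using the equivalence of all norms on the finite-dimensional space $[P_2(\hat K)]^2$, together with the fact that the degrees of freedom of $[P_2(K)]^2$ are the values at the nodes $\mathcal{V}_K\cup\mathcal{M}_K$, I would obtain after undoing the scaling
\[
h_K^{-2}\|\b{E_h}\b{v_h}-\b{v_h}\|^2_{\b{L^2}(K)} \lesssim \sum_{p\in\mathcal{V}_K\cup\mathcal{M}_K}\big|\b{E_h}\b{v_h}(p)-\b{v_h}|_K(p)\big|^2 .
\]
This reduces the first estimate to bounding the nodal discrepancies.

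Next I would control each nodal discrepancy by jumps of $\b{v_h}$. For a node $p\notin\mathcal{V}_h^D\cup\mathcal{M}_h^D$, the definition of $\b{E_h}$ gives $\b{E_h}\b{v_h}(p)-\b{v_h}|_K(p)=|\mathcal{T}_p|^{-1}\sum_{K'\in\mathcal{T}_p}\big(\b{v_h}|_{K'}(p)-\b{v_h}|_K(p)\big)$, an average of differences over the patch $\mathcal{T}_p$. Connecting $K$ to each $K'$ by a path of elements in $\mathcal{T}_p$ that successively share an edge through $p$, I would telescope each difference into a sum of single-edge jumps; since the fan around a boundary node closes through interior edges only, every edge in such a path lies in $\mathcal{E}_h^i\subset\mathcal{E}_h^0$, and since across an interior edge the jump satisfies $|\sjump{\b{v_h}}|=\big|\b{v_h}|_{K_1}-\b{v_h}|_{K_2}\big|$ (as $\b{n_2}=-\b{n_1}$), each step contributes $|\sjump{\b{v_h}}(p)|$. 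Shape regularity bounds $|\mathcal{T}_p|$ and hence the path lengths, so $\big|\b{E_h}\b{v_h}(p)-\b{v_h}|_K(p)\big|^2\lesssim\sum_{e\in\mathcal{E}_p\cap\mathcal{E}_h^0}|\sjump{\b{v_h}}(p)|^2$. For a Dirichlet node $p\in\mathcal{V}_h^D\cup\mathcal{M}_h^D$ I would instead use $\b{E_h}\b{v_h}(p)=\b{0}$, so the discrepancy is $\b{v_h}|_K(p)$; since $p$ lies on a Dirichlet edge $e_D\in\mathcal{E}_h^D$ whose jump is the boundary value $\b{v_h}$ itself, I would first control the value on the adjacent element by $|\sjump{\b{v_h}}(p)|$ on $e_D$ and then telescope through interior edges to reach $K$.

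It then remains to pass from pointwise nodal jumps to the $\b{L^2}(e)$ jump norms on the right. Since $\sjump{\b{v_h}}|_e$ is a quadratic polynomial on $e$, a one-dimensional scaling (norm equivalence on $[P_2(\hat e)]^2$) yields $|\sjump{\b{v_h}}(p)|^2\lesssim h_e^{-1}\|\sjump{\b{v_h}}\|^2_{\b{L^2}(e)}$. Summing the elementwise estimate over $K\in\mathcal{T}_h$ and using that, by shape regularity, each edge and each node is shared by a uniformly bounded number of elements (so the reindexing from $K$-sums to edge-sums produces only bounded multiplicity) delivers the first inequality. Finally, the second inequality follows at once from the first: applying the inverse inequality $\|\nabla(\b{E_h}\b{v_h}-\b{v_h})\|_{\b{L^2}(K)}\lesssim h_K^{-1}\|\b{E_h}\b{v_h}-\b{v_h}\|_{\b{L^2}(K)}$, valid since $\b{E_h}\b{v_h}-\b{v_h}\in\b{{V}_h}$ restricts to $[P_2(K)]^2$, and summing over $K$ reduces the gradient bound to the already-established $\b{L^2}$ bound.

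The main obstacle I anticipate is the combinatorial bookkeeping in the telescoping step: one must choose, for every node $p$ and every pair of elements in $\mathcal{T}_p$, a connecting path that uses only edges from $\mathcal{E}_h^0$, verify that boundary-node fans indeed close through interior edges (so that jumps across $\Gamma_N$ and $\Gamma_C$ edges, absent from the right-hand side, are never needed), and ensure the final reindexing counts each jump term with only $O(1)$ multiplicity. The scaling equivalences themselves are routine; it is this patch-connectivity argument and the separate treatment of Dirichlet nodes where the proof earns its keep.
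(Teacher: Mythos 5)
Your proof is correct and is exactly the argument the paper has in mind: the paper gives no details but states the lemma "can be proved using the scaling arguments" of \cite{TG:2016:VIDG1}, and your elementwise norm-equivalence on $[P_2(K)]^2$, telescoping of nodal discrepancies through patch edges in $\mathcal{E}_h^0$ (with the separate treatment of Dirichlet nodes via $\sjump{\b{v_h}}=\b{v_h}\otimes\b{n_e}$ on $\mathcal{E}_h^D$), the one-dimensional scaling $|\sjump{\b{v_h}}(p)|^2\lesssim h_e^{-1}\|\sjump{\b{v_h}}\|^2_{\b{L^2}(e)}$, and the inverse inequality for the gradient bound is precisely that standard scaling argument, carried out with the correct bounded-multiplicity bookkeeping.
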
 
\noindent
\section{\textbf{A Posteriori Error Estimates}}
\noindent
In this section,  we perform \textit{a posteriori} error estimation wherein we introduce residual based \textit{a posteriori} error estimator and establish the reliability and efficiency of the estimator.  For this,  we define a Galerkin functional which help us to measure error in both unknowns $\b{u_h}$ and $\b{\lambda_h}$ in certain norms followed by that we derive a global upper bound for error term by estimator.   Finally we conclude the section with a discussion on the efficiency results.
\par
\vspace{0.3 cm}
\noindent
Define the following a posteriori estimator contributions
\begin{align*}
\eta^2_1 &:= \sum\limits_{K\in \mathcal{T}_h}h^2_K \|\b{f} + \b{div} \b{\sigma}_h(\b{u_h})\|^2_{\b{L^2}(K)}, \\
\eta^2_2 &:= \sum_{e\in \mathcal{E}^i_h}h_e \|\sjump{ \b{\sigma}_h(\b{u_h})}\|^2_{\b{L^2}(e)}, \\
\eta^2_3 &:= \sum_{e\in \mathcal{E}^N_h}h_e \|\b{g} - \b{\sigma}_h(\b{u_h})\b{n}\|^2_{\b{L^2}(e)}, \\
\eta^2_4 &:= \sum_{e\in \mathcal{E}^C_h}h_e \|\b{\lambda_h} + \b{\sigma}_h(\b{u_h})\b{n}\|^2_{\b{L^2}(e)}, \\
\eta^2_5 &:= \sum_{e\in \mathcal{E}^0_h}\frac{1}{h_e} \|\sjump{\b{u_h}}\|^2_{\b{L^2}(e)}.
\end{align*} 
The total residual error estimator is given by 
\begin{align*} 
\eta^2_h := \eta^2_1 + \eta^2_2 + \eta^2_3 +\eta^2_4 +\eta^2_5 - \sum_{e\in \b{\mathcal{C}_h}} \int_{e}  \lambda^1_h (E^1_h\b{u_h})^{-}~ds + \sum_{e\in \mathcal{E}^C_h} \|\gamma_c(E^1_h\b{u_h})^{+}\|^2_{H^{\frac{1}{2}}(e)}.
\end{align*}
Let us introduce a new norm on the space $\b{{V}_h} $ as
\begin{align*}
\norm{\b{v_h}} := |\b{v_h}|^2_{\b{h}} + |\b{v_h}|^2_{\b{*}},
\end{align*}
where the quantities $|\b{v_h}|_{\b{h}} $  and $|\b{v_h}|_{\b{*}} $ are defined in equation \eqref{relation}.  
\vspace{0.1 cm}
\par
\noindent
\subsection{\textbf{Reliable A Posteriori Error Estimates}}
This subsection is intended to establish the reliability of residual error estimator wherein the main result is stated in the following theorem.
\begin{theorem}\label{main}
Let $\b{u}$ be the solution of continuous formulation \eqref{weak1} and $\b{u_h}$ be the solution of discrete variational inequality \eqref{discrete},  then we have the following bound 
\begin{align*}
{\norm{\b{u-u_h}}}^2 + \|\b{\lambda}-\b{\lambda_h}\|^2_{\b{H^{-\frac{1}{2}}}(\Gamma_C)}&\lesssim  \eta^2_h.
\end{align*}
\end{theorem}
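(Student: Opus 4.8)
The plan is to establish reliability by splitting the combined error into a discretization (nonconformity) part and a consistency part, then bounding each by the residual estimator contributions $\eta_1,\dots,\eta_5$ together with the two contact-region terms. The natural first move is to use the enriching map $\b{E_h}$ from Lemma \ref{enriching} to write $\b{u_h} = \b{E_h}\b{u_h} + (\b{u_h}-\b{E_h}\b{u_h})$, so that $\b{E_h}\b{u_h}\in\b{{V}_c}$ is a genuinely conforming competitor. The nonconforming part $\b{u_h}-\b{E_h}\b{u_h}$ is controlled directly by $\eta_5 = \sum_{e\in\mathcal{E}^0_h}h_e^{-1}\|\sjump{\b{u_h}}\|^2_{\b{L^2}(e)}$ via Lemma \ref{enriching}, so the whole game reduces to estimating the conforming error $\b{u}-\b{E_h}\b{u_h}$ and the dual error $\b{\lambda}-\b{\lambda_h}$.

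\medskip
\noindent
First I would test the error against an arbitrary $\b{v}\in\b{{V}}$ using the continuous residual identity \eqref{lagrange1}, namely $\langle\b{\lambda},\b{\gamma_c}(\b{v})\rangle_c = L(\b{v})-a(\b{u},\b{v})$, and subtract the discrete identity of Lemma \ref{eqn2}, $(\b{\lambda_h},\b{v_h})_{\b{L^2}(\Gamma_C)} = L(\b{v_h})-\mathcal{A}_h(\b{u_h},\b{v_h})$, evaluated at a suitable conforming test function. Choosing $\b{v} = \b{u}-\b{E_h}\b{u_h}$ and exploiting $\b{V}$-ellipticity of $a(\cdot,\cdot)$ gives $\alpha\|\b{u}-\b{E_h}\b{u_h}\|^2 \lesssim a(\b{u}-\b{E_h}\b{u_h},\,\b{u}-\b{E_h}\b{u_h})$, and the right side is rewritten through the residual. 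Elementwise integration by parts on $a(\b{E_h}\b{u_h},\cdot)$ produces precisely the interior residual $\b{f}+\b{div}\,\b{\sigma}_h(\b{u_h})$ (giving $\eta_1$), the interior edge jumps $\sjump{\b{\sigma}_h(\b{u_h})}$ (giving $\eta_2$), the Neumann mismatch $\b{g}-\b{\sigma}_h(\b{u_h})\b{n}$ on $\Gamma_N$ (giving $\eta_3$), and a contact-boundary term $\b{\lambda_h}+\b{\sigma}_h(\b{u_h})\b{n}$ on $\Gamma_C$ (giving $\eta_4$). At this stage a Clément-type interpolant (Lemma \ref{clement}) must be inserted so that the local residuals are tested against $\b{v}-\b{v}_h$, yielding the scaling weights $h_K$ and $h_e^{1/2}$ that match the definitions of $\eta_1,\dots,\eta_3$; the $\b{E_h}$-approximation estimates of Lemma \ref{enriching} absorb the leftover nonconforming contributions back into $\eta_5$.

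\medskip
\noindent
For the Lagrange-multiplier error $\|\b{\lambda}-\b{\lambda_h}\|_{\b{H^{-\frac12}}(\Gamma_C)}$ I would use the dual-norm definition, taking an arbitrary $\b{w}\in\b{H^{\frac12}}(\Gamma_C)$ with $\|\b{w}\|_{\b{H^{\frac12}}(\Gamma_C)}=1$, lifting it by $\b{\hat{\gamma}_c}$, and writing $\langle\b{\lambda}-\b{\lambda_h},\b{w}\rangle_c$ as a difference of the continuous and discrete residual functionals. The sign properties of $\b{\lambda_h}$ from Lemma \ref{sign3} (namely $\lambda^1_h\ge 0$, $\lambda^2_h=0$) together with Remark \ref{rem7} ($\lambda^1_h|_e=0$ on $\b{\mathcal{N}_h}$) are essential here: they let me restrict the critical complementarity term to the discrete contact set $\b{\mathcal{C}_h}$, which is exactly where the estimator term $-\sum_{e\in\b{\mathcal{C}_h}}\int_e\lambda^1_h(E^1_h\b{u_h})^{-}\,ds$ originates. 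The positive-part penalty $\sum_{e\in\mathcal{E}^C_h}\|\gamma_c(E^1_h\b{u_h})^{+}\|^2_{H^{\frac12}(e)}$ measures the violation of the non-penetration constraint by the (slightly non-admissible) conforming lift $\b{E_h}\b{u_h}$ and arises from comparing against the constraint $u_n\le 0$ through \eqref{lagrange2}.

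\medskip
\noindent
\textbf{The main obstacle} will be the contact-boundary bookkeeping: one must handle the mismatch between the \emph{pointwise} complementarity conditions satisfied by the exact pair $(\b{u},\b{\lambda})$ on $\Gamma_C$ and the \emph{edge-averaged} (integral) constraints defining $\b{{K}_h}$ and $\b{\lambda_h}$ via the projection $\b{\pi_h}$ and the operator $\b{\beta_h}$. Concretely, the term $\langle\b{\lambda},\b{\gamma_c}(\b{E_h}\b{u_h}-\b{u})\rangle_c$ must be split using the $L^2(\Gamma_C)$-projection and the sign-split $E^1_h\b{u_h} = (E^1_h\b{u_h})^{+} - (E^1_h\b{u_h})^{-}$, with the negative part paired against $\lambda^1_h$ on $\b{\mathcal{C}_h}$ and the positive part penalized in the $H^{\frac12}(e)$ norm. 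Controlling the resulting cross terms requires the approximation property \eqref{prop} of $\b{\pi_h}$ and careful trace/embedding estimates on each contact edge, and it is here that all seven estimator pieces must be shown to close up without any uncontrolled remainder. Once these contact terms are reconciled, the reliability bound ${\norm{\b{u}-\b{u_h}}}^2 + \|\b{\lambda}-\b{\lambda_h}\|^2_{\b{H^{-\frac12}}(\Gamma_C)} \lesssim \eta^2_h$ follows by collecting all contributions and applying Young's inequality to hide the small-constant factors.
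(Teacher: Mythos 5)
Your proposal is correct and follows essentially the same route as the paper: the paper packages your ellipticity-plus-residual argument and dual-norm bound for $\|\b{\lambda}-\b{\lambda_h}\|_{\b{H^{-\frac{1}{2}}}(\Gamma_C)}$ into the Galerkin functional $\b{\mathcal{G}_h}$ and the intermediate Lemma \ref{lem} (cited from \cite{TG:2016:VIDG1}), then bounds $\|\b{\mathcal{G}_h}\|_{\b{H^{-1}}(\O)}$ exactly as you describe --- Cl\'ement splitting giving $\eta_1,\dots,\eta_4$ via elementwise integration by parts and \eqref{lagrange1}, the conforming-test term giving $\eta_5$ via Lemma \ref{eqn2} and \eqref{bound3} --- and treats the contact term with $\b{u_{conf}}=\min\{\b{E_h}\b{u_h},\b{0}\}$, \eqref{lagrange2}, Lemma \ref{sign3}, Remark \ref{rem7} and Young's inequality, yielding $\eta_6$ and $\eta_7$ just as in your sign-split of $E^1_h\b{u_h}$. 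The only cosmetic difference is that you inline the content of Lemma \ref{lem} rather than invoking it as a black box (and your mention of \eqref{prop} is not actually needed in the paper's handling of the contact term).
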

\par
\noindent
To prove this theorem, we require an intermediate result.
Analogous to the case of linear elliptic problem,  we define Galerkin functional which plays an essential role  in deriving the upper and lower bound of total residual error estimator $\eta_h$.  To this end,  we
define a map $\b{\mathcal{G}_h}: \b{{V}} \longrightarrow \mathbb{R}$ as
\begin{align*}
\b{\mathcal{G}_h}(\b{v}) := a_h(\b{u-u_h}, \b{v}) +\langle \b{\lambda} -\b{\lambda_h}, \b{\gamma_c}(\b{v}) \rangle_c ~\forall~\b{v}\in \b{{V}}. 
\end{align*}
In the next lemma,  we will see the the bound on error in displacement term and Lagrange multiplier by dual norm of functional $\b{\mathcal{G}_h}$ and duality pairing between Lagrange multiplier and displacements.
\begin{lemma}\label{lem}
It holds that 
\begin{align*}
{\norm{\b{u-u_h}}}^2 + \|\b{\lambda}-\b{\lambda_h}\|^2_{\b{H^{-\frac{1}{2}}}(\Gamma_C)} &\lesssim  \|\b{\mathcal{G}_h}\|^2_{\b{H^{-1}}(\O)} + \sum_{e\in \mathcal{E}^0_h}\frac{1}{h_e} \|\sjump{\b{u_h}}\|^2_{\b{L^2}(e)} \\&- \langle \b{\lambda_h} -\b{\lambda}, \b{\gamma_c(E_h\b{u_h})} -\b{\gamma_c(u)} \rangle_c.
\end{align*}
\begin{proof}
This lemma can be proved on similar lines as in Lemma 5.1 of article \cite{TG:2016:VIDG1},  therefore proof is  omitted.
\end{proof}
\end{lemma}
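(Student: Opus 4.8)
The plan is to control the two constituents of $\norm{\b{u-u_h}}^2 = |\b{u}-\b{u_h}|^2_{\b{h}} + |\b{u}-\b{u_h}|^2_{\b{*}}$ separately and then to dominate the multiplier error by the energy error. Since the exact solution $\b{u}\in\b{V}$ is conforming, its jumps across every edge in $\mathcal{E}_h^0$ vanish, so the jump part is available for free:
\begin{align*}
|\b{u}-\b{u_h}|^2_{\b{*}} = \sum_{e\in\mathcal{E}_h^0}\frac{1}{h_e}\|\sjump{\b{u_h}}\|^2_{\b{L^2}(e)},
\end{align*}
which is already one of the terms on the right-hand side. Hence the whole argument reduces to estimating the broken energy seminorm $|\b{u}-\b{u_h}|^2_{\b{h}} = a_h(\b{u}-\b{u_h},\b{u}-\b{u_h})$ and the dual norm of $\b{\lambda}-\b{\lambda_h}$.

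For the multiplier I would invoke the duality characterisation of $\|\cdot\|_{\b{H^{-\frac{1}{2}}}(\Gamma_C)}$ together with the continuous right inverse $\b{\hat{\gamma}_c}$ of the trace map. Given $\b{w}\in\b{H^{\frac{1}{2}}}(\Gamma_C)$, set $\b{v}=\b{\hat{\gamma}_c}(\b{w})\in\b{V}$, so $\b{\gamma_c}(\b{v})=\b{w}$ and $\|\b{v}\|_{\b{H^1}(\O)}\lesssim\|\b{w}\|_{\b{H^{\frac{1}{2}}}(\Gamma_C)}$. The definition of $\b{\mathcal{G}_h}$ then yields
\begin{align*}
\langle\b{\lambda}-\b{\lambda_h},\b{w}\rangle_c = \b{\mathcal{G}_h}(\b{v}) - a_h(\b{u}-\b{u_h},\b{v}).
\end{align*}
Bounding $\b{\mathcal{G}_h}(\b{v})\le\|\b{\mathcal{G}_h}\|_{\b{H^{-1}}(\O)}\|\b{v}\|_{\b{H^1}(\O)}$ and, by Cauchy--Schwarz for $a_h$, $a_h(\b{u}-\b{u_h},\b{v})\le|\b{u}-\b{u_h}|_{\b{h}}\,|\b{v}|_{\b{h}}$, then taking the supremum over $\b{w}$, gives
\begin{align*}
\|\b{\lambda}-\b{\lambda_h}\|^2_{\b{H^{-\frac{1}{2}}}(\Gamma_C)}\lesssim\|\b{\mathcal{G}_h}\|^2_{\b{H^{-1}}(\O)}+|\b{u}-\b{u_h}|^2_{\b{h}},
\end{align*}
so it remains only to absorb $|\b{u}-\b{u_h}|^2_{\b{h}}$ into the claimed right-hand side.

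The heart of the proof is the energy estimate, carried out by inserting the enriching map $\b{E_h}$. Writing $\b{u}-\b{u_h}=(\b{u}-\b{E_h u_h})+(\b{E_h u_h}-\b{u_h})$ with $\b{u}-\b{E_h u_h}\in\b{V}$ conforming, and using the identity $a_h(\b{u}-\b{u_h},\b{u}-\b{u_h})=|\b{u}-\b{u_h}|^2_{\b{h}}$, I would expand
\begin{align*}
|\b{u}-\b{u_h}|^2_{\b{h}} = a_h(\b{u}-\b{u_h},\b{u}-\b{E_h u_h}) + a_h(\b{u}-\b{u_h},\b{E_h u_h}-\b{u_h}).
\end{align*}
On the conforming test function $\b{u}-\b{E_h u_h}$ the definition of $\b{\mathcal{G}_h}$ gives
\begin{align*}
a_h(\b{u}-\b{u_h},\b{u}-\b{E_h u_h}) = \b{\mathcal{G}_h}(\b{u}-\b{E_h u_h}) - \langle\b{\lambda}-\b{\lambda_h},\b{\gamma_c}(\b{u})-\b{\gamma_c}(\b{E_h u_h})\rangle_c,
\end{align*}
and the last pairing equals precisely $-\langle\b{\lambda_h}-\b{\lambda},\b{\gamma_c}(\b{E_h u_h})-\b{\gamma_c}(\b{u})\rangle_c$, i.e. the residual duality term retained in the statement. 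For the first summand I would use $\b{\mathcal{G}_h}(\b{u}-\b{E_h u_h})\le\|\b{\mathcal{G}_h}\|_{\b{H^{-1}}(\O)}\|\b{u}-\b{E_h u_h}\|_{\b{H^1}(\O)}$ together with $\|\b{u}-\b{E_h u_h}\|_{\b{H^1}(\O)}\lesssim|\b{u}-\b{u_h}|_{\b{h}}+|\b{E_h u_h}-\b{u_h}|_{\b{h}}$ (Korn's and the Poincar\'e--Friedrichs inequality for piecewise $H^1$ fields). The nonconforming remainder $a_h(\b{u}-\b{u_h},\b{E_h u_h}-\b{u_h})$ is controlled by $|\b{u}-\b{u_h}|_{\b{h}}\,|\b{E_h u_h}-\b{u_h}|_{\b{h}}$, and Lemma \ref{enriching} furnishes $|\b{E_h u_h}-\b{u_h}|^2_{\b{h}}\lesssim\sum_{e\in\mathcal{E}_h^0}h_e^{-1}\|\sjump{\b{u_h}}\|^2_{\b{L^2}(e)}$. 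Repeated use of Young's inequality then absorbs every $|\b{u}-\b{u_h}|^2_{\b{h}}$ contribution into the left-hand side, leaving exactly $\|\b{\mathcal{G}_h}\|^2_{\b{H^{-1}}(\O)}$, the jump sum, and the duality term; combining this with the multiplier bound and the free jump identity completes the estimate.

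The main obstacle I anticipate is the bookkeeping around the absorption step: one must track the continuity constant of $a_h$ and the norm-equivalence constants from Korn's and Poincar\'e--Friedrichs inequalities carefully, so that each $|\b{u}-\b{u_h}|^2_{\b{h}}$ generated on the right can indeed be soaked up on the left. Equally important is the discipline of \emph{retaining} the pairing $-\langle\b{\lambda_h}-\b{\lambda},\b{\gamma_c}(\b{E_h u_h})-\b{\gamma_c}(\b{u})\rangle_c$ verbatim rather than estimating it here; its sign treatment, using the complementarity of $\b{\lambda}$ and the sign properties of $\b{\lambda_h}$ from Lemma \ref{sign3}, is deferred to the reliability theorem.
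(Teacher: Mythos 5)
Your proof is correct and follows essentially the same route as the proof the paper defers to (Lemma 5.1 of \cite{TG:2016:VIDG1}): the jump part of $\norm{\b{u}-\b{u_h}}$ is obtained for free from the conformity of $\b{u}$, the energy part is estimated by splitting through the enriching map $\b{E_h}$ and applying the Galerkin functional $\b{\mathcal{G}_h}$ to the conforming function $\b{u}-\b{E_h}\b{u_h}$ while retaining the duality pairing verbatim, the multiplier error is dualized via the continuous right inverse $\b{\hat{\gamma}_c}$ of the trace map, and Lemma \ref{enriching} plus Young's inequality complete the absorption. No gaps; this is the intended argument.
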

\vspace{0.2 cm}
\par
\noindent
With the help of Lemma \ref{lem},  we establish the reliability of \textit{a posteriori} error estimator.
\begin{proof}[Proof of Theorem \ref{main}]
To start with,  we choose $\b{\zeta} \in \b{{V}}$  to be arbitrary and corresponding to this,  let $\b{\zeta_h} \in \b{{V}_c}$ be the approximation of $\b{\zeta}$ satisfying the estimates in Lemma \ref{clement}.
We have,  
\begin{align}\label{400}
\b{\mathcal{G}_h}(\b{\zeta}) =\b{\mathcal{G}_h}(\b{\zeta} -\b{\zeta_h}) + \b{\mathcal{G}_h}(\b{\zeta_h}).
\end{align}
Exploiting the definition of Galerkin functional $\b{\mathcal{G}_h}$,  we  bound the first term in the the RHS of \eqref{400} as follows
\begin{align*}
\b{\mathcal{G}_h}(\b{\zeta} -\b{\zeta_h}) &= a_h(\b{u - u_h},  \b{\zeta} -\b{\zeta_h}) + \langle \b{\lambda} -\b{\lambda_h}, \b{\gamma_c(\zeta)} -\b{\gamma_c(\zeta_h)}
 \rangle_c\\
&= L(\b{\zeta -\zeta_h })- a_h(\b{u_h},   \b{\zeta}- \b{\zeta_h})-\langle \b{\lambda_h},  \b{\gamma_c(\zeta)}- \b{\gamma_c(\zeta_h)} \rangle_c 
\end{align*} 
where the latter equation followed from \eqref{lagrange1}.  Further,  a use of integration by parts,  Cauchy-Schwarz inequality,  discrete trace inequality together with the Cl$\acute{e}$ment approximation properties (Lemma \ref{clement}),  yields
\begin{align}\label{499}
\b{\mathcal{G}_h}(\b{\zeta} -\b{\zeta_h}) &= \sum_{K\in \mathcal{T}_h} \int\limits_K \b{f} \cdot (\b{\zeta} - \b{\zeta_h})~\textbf{dx} + \sum_{e\in \mathcal{E}^N_h} \int\limits_e \b{g} \cdot (\b{\zeta} - \b{\zeta_h})~\textbf{ds}-\sum_{K\in \mathcal{T}_h} \int\limits_K \b{\sigma}_h(\b{u_h}): \b{\varepsilon}_h(\b{\zeta} - \b{\zeta_h})~\textbf{dx} \\&- \sum_{e\in \mathcal{E}^C_h} \int\limits_e \b{\lambda_h} \cdot (\b{\zeta} - \b{\zeta_h})~\textbf{ds} \nonumber\\
&= \sum_{K\in \mathcal{T}_h} \int\limits_K \big(\b{f} + \b{div}\b{\sigma}_h(\b{u_h})\big) \cdot (\b{\zeta} - \b{\zeta_h})~\textbf{dx} +  \sum_{e\in \mathcal{E}^N_h} \int\limits_e \big(\b{g} - \b{\sigma}_h(\b{u_h})\b{n}\big)  \cdot (\b{\zeta} - \b{\zeta_h})~\textbf{ds} \nonumber\\
&~~- \sum_{e\in \mathcal{E}^C_h} \int\limits_e \big(\b{\lambda_h} + \b{\sigma}_h(\b{u_h})\b{n}\big)  \cdot (\b{\zeta} - \b{\zeta_h})~\textbf{ds} -\sum_{e\in \mathcal{E}^i_h} \int\limits_e \sjump {\b{\sigma}_h(\b{u_h})}\cdot (\b{\zeta} - \b{\zeta_h})~\textbf{ds} \nonumber\\
&\lesssim (\eta_1 +\eta_2 +\eta_3 +\eta_4)|\b{\zeta}|_{\b{H^1}(\O)}.
\end{align}
Further,  we employ the equation $\eqref{eqn1}$ together with $\eqref{reformulate}$ and $\eqref{bound3}$ in order to bound second term of $\eqref{400}$ as follows
\begin{align} \label{500}
\b{\mathcal{G}_h}(\b{\zeta_h}) &= L(\b{\zeta_h}) - a_h( \b{u_h},  \b{\zeta_h}) - \langle \b{\lambda_h},  \b{\gamma_c}(\b{\zeta_h}) \rangle_c  \nonumber\\
&= ( \b{\lambda_h},  \b{\zeta_h} )_{\b{L^2}(\Gamma_C)} - \langle \b{\lambda_h},  \b{\gamma_c}(\b{\zeta_h}) \rangle_c - b_h( \b{u_h}, \b{ \zeta_h}) \nonumber \\
&\lesssim  \eta_5|\b{\zeta}|_{\b{H^1}(\O)}.
\end{align}
Thus,  combining \eqref{499} and \eqref{500} and using discrete Cauchy-Schwarz inequality,  we obtain
\begin{align}\label{main1}
\|\b{\mathcal{G}_h}\|_{\b{H^{-1}}(\O)} \lesssim \bigg(\eta^2_1+\eta^2_2+\eta^2_3+\eta^2_4+\eta^2_5\bigg)^{\frac{1}{2}}.
\end{align}
Lastly,  we need to derive the upper bound of  $- \langle \b{\lambda_h} -\b{\lambda}, \b{\gamma_c(E_h\b{u_h})} -\b{\gamma_c(u)} \rangle_c$.  Taking into the account $\b{{K}_h} \nsubseteq \b{{K}}$,  we define $\b{u_{conf}}= (u^1_{conf}, u^2_{conf}) := min\{\b{E_h}\b{u_h}, \b{0}\} \in \b{{K}}$.  In view of Remark \ref{rem1} and Lemma \ref{sign3},  we have 
\begin{align*}
 \langle \b{\lambda_h} - \b{\lambda},  {\b{\gamma_c}(\b{E_h}\b{u_h})} -\b{\gamma_c(u)} \rangle_c &=\langle \lambda^1_h - \lambda^1,  \gamma_c(E^1_h\b{u_h}) -\gamma_c({u}^1) \rangle. 
\end{align*}
Further,  using the continuous variational inequality \eqref{weak1} and Young's inequality,   we have 
\begin{align}\label{312}
\langle \lambda^1_h - \lambda^1,  \gamma_c(E^1_h\b{u_h}) -\gamma_c({u}^1) \rangle &= \langle \lambda^1_h ,  \gamma_c(E_h^1\b{u_h}) - \gamma_c(u^1)  \rangle -  \langle \lambda^1 ,  \gamma_c(E_h^1\b{u_h}) -\gamma_c(u^1) \rangle \nonumber\\
&= \langle \lambda^1_h ,  \gamma_c(E_h^1\b{u_h})- \gamma_c(u^1) \rangle -  \langle \lambda^1 ,   \gamma_c(E_h^1\b{u_h}) - \gamma_c(u^1_{conf}) \rangle \nonumber\\&-  \langle \lambda^1 ,   \gamma_c(u^1_{conf}) - \gamma_c(u^1) \rangle \nonumber \\
& \geq \langle \lambda^1_h ,  \gamma_c(E_h^1\b{u_h})- \gamma_c(u^1) \rangle -  \langle \lambda^1 ,   \gamma_c(E_h^1\b{u_h}) -\gamma_c(u^1_{conf}) \rangle \nonumber \\
&= \langle \lambda^1_h ,  \gamma_c(u^1_{conf})  -\gamma_c( u^1)  \rangle -  \langle \lambda^1 - \lambda^1_h, ~  \gamma_c(E_h^1\b{u_h}) - \gamma_c(u^1_{conf}) \rangle \nonumber  \\
&= \langle \lambda^1_h , \gamma_c(u^1_{conf})  - \gamma_c(u^1)  \rangle - \theta \|\lambda^1 - \lambda^1_h\|^2_{H^{-\frac{1}{2}}(\Gamma_C)} \nonumber \\&- \frac{1}{\theta}\|\gamma_c(E_h^1\b{u_h}) - \gamma_c(u^1_{conf})\|^2_{H^{\frac{1}{2}}(\Gamma_C)},
\end{align}
for some arbitrary small $\theta>0$.  Note that on $\Gamma_C$,  we have
\begin{align} \label{313}
\gamma_c( u^1_{conf}) - \gamma_c(u^1)  \geq \gamma_c(u^1_{conf}).
\end{align}
Inserting \eqref{313} into \eqref{312} and further using the fact that $\lambda_h^1$ is non negative on $\Gamma_C$,  we find 
\begin{align}\label{314}
\langle \lambda^1_h - \lambda^1, \gamma_c(E^1_h\b{u_h}) -\gamma_c({u}^1) \rangle  &\geq  \langle \lambda^1_h,  \gamma_c(u^1_{conf}) \rangle- \theta \|\lambda^1 - \lambda^1_h\|^2_{H^{-\frac{1}{2}}(\Gamma_C)} \nonumber\\&- \frac{1}{\theta}\|\gamma_c(E_h^1\b{u_h}) - \gamma_c(u^1_{conf})\|^2_{H^{\frac{1}{2}}(\Gamma_C)}. 
\end{align}
Further, we  estimate the first term in right hand side of \eqref{314} using  Remark \ref{rem7} as follows
\begin{align}\label{315}
\langle \lambda^1_h ,  \gamma_c(u^1_{conf}) \rangle = \int_{\Gamma_C}  \lambda^1_h u^1_{conf} ~ds &= \sum_{e\in \b{\mathcal{C}_h}} \int_{e}  \lambda^1_h u^1_{conf} ~ds + \sum_{e\in \b{\mathcal{N}_h}} \int_{e}  \lambda^1_h u^1_{conf} ~ds \nonumber \\&=  \sum_{e\in \b{\mathcal{C}_h}} \int_{e}  \lambda^1_h u^1_{conf} ~ds \nonumber\\  &=\sum_{e\in \b{\mathcal{C}_h}} \int_{e}  \lambda^1_h (E^1_h\b{u_h})^{-}~ds.
\end{align}
Thus combining \eqref{314} and \eqref{315},  we find 
\begin{align}\label{main2}
-\langle \lambda^1_h - \lambda^1,  \gamma_c(E^1_h\b{u_h}) -\gamma_c({u}^1) \rangle  \leq &- \sum_{e\in \b{\mathcal{C}_h}} \int_{e}  \lambda^1_h (E^1_h\b{u_h})^{-} ~ds + \theta \|\lambda^1 - \lambda^1_h\|^2_{H^{-\frac{1}{2}}(\Gamma_C)} \notag \\&+\frac{1}{\theta} \|\gamma_c(E_h^1\b{u_h}) -\gamma_c (u^1_{conf})\|^2_{H^{\frac{1}{2}}(\Gamma_C)}.
\end{align}
wherein,  $\gamma_c(E_h^1\b{u_h}) -\gamma_c (u^1_{conf})=\gamma_c(E^1_h\b{u_h})^{+}.$
Finally,  using \eqref{main1} and \eqref{main2},  together with Lemma \ref{lem} we obtain
\begin{align}\label{main3}
{\norm{\b{u-u_h}}}^2 + \|\b{\lambda}-\b{\lambda_h}\|^2_{\b{H^{-\frac{1}{2}}}(\Gamma_C)}&\lesssim  \eta^2_h.
\end{align} 
\end{proof}
\par 
\noindent
Note,  for the ease of notation,  we set $\eta_6 := - \underset{e\in \b{\mathcal{C}_h}}{\sum} \int_{e}  \lambda^1_h (E^1_h\b{u_h})^{-}~ds$ and $\eta_7 :=  \sum\limits_{e\in \mathcal{E}^C_h} \|\gamma_c(E^1_h\b{u_h})^{+}\|^2_{H^{\frac{1}{2}}(e)}$.
\subsection{\textbf{Efficiency of A Posteriori Error Estimator}}
\par
In this subsection,  we discuss the local efficiency estimates for \textit{a posteriori} error control for the quadratic DG FEM.  Therein, the standard techniques based on bubble functions  can be used to prove the efficiency estimates { as discussed in articles \cite{TG:2016:VIDG1, KP:2022:QuadSignorini}, thus the proofs are omitted.} \ {The efficiency of the estimator contributions $\eta_6$ and $\eta_7$ is still not clear but they are taken into the consideration while performing the numerical experiments}.  We define the oscillation terms as follows
\begin{align*}
{Osc}(\b{f}, \O)^2 := \sum_{K \in \mathcal{T}_h} {Osc}(\b{f}, K)^2,  \\ {Osc}(\b{g}, \Gamma_N)^2 := \sum_{e \in \mathcal{E}^N_h} {Osc}(\b{g}, e)^2, 
\end{align*}
where,  ${Osc}(\b{f}, K):= h_K\underset{\b{\bar{f}}\in [P_1(K)]^2}{min}\|\b{f-\bar{f}}\|_{\b{L^2}(K)}$ and 
 ${Osc}(\b{g}, e):=h_e^{\frac{1}{2}}\underset{\b{\bar{g}}\in [P_1(e)]^2}{min}\|\b{g-\bar{g}}\|_{\b{L^2}(e)}$ for any $K \in \mathcal{T}_h$ and $e \in \mathcal{E}^N_h$,  respectively.
 
 \noindent
{Below we state the theorem  concerning the efficiency of the residual estimator $\eta_h$.}
\begin{theorem}
Let $\b{u}$ and $\b{u_h}$ be the solution of equation \eqref{weak1} and \eqref{discrete},  respectively.  Then,  the following hold
\begin{itemize}
\item \textbf{Bound for $\eta_1$} : $ \underset{K \in \mathcal{T}_h}{\sum}h_K^2 \|\b{f} + \b{div} \b{\sigma}_h(\b{u_h})\|^2_{\b{L^2}(K)} \lesssim \norm{\b{u-u_h}}^2 +{Osc}(\b{f}, \O)^2$,\\
\item  \textbf{Bound for $\eta_2$}  : $\underset{{e \in \mathcal{E}^i_h}}{\sum}h_e \|\sjump{\b{\sigma}_h(\b{u_h})}\|^2_{\b{L^2}(e)} \lesssim \norm{\b{u-u_h}}^2+{Osc}(\b{f}, \O)^2$,\\
 \item \textbf{Bound for $\eta_3$}: $ \underset{e \in \mathcal{E}^N_h}{\sum}h_e \|\b{\sigma}_h(\b{u_h})\b{n}-\b{g}\|^2_{\b{L^2}(e)} \lesssim \norm{\b{u-u_h}}^2 + {Osc}(\b{f}, \O)^2+{Osc}(\b{g}, \Gamma_N)^2$,\\
 \item  \textbf{Bound for $\eta_4$}: $\underset{e \in \mathcal{E}^C_h}{ \sum}h_e \|\b{\sigma}_h(\b{u_h})\b{n}+\b{\lambda_h}\|^2_{\b{L^2}(e)} \lesssim \norm{\b{u-u_h}}^2 + \|\b{\lambda} -\b{\lambda_h}\|^2_{\b{H^{-\frac{1}{2}}}(\Gamma_C)}+ {Osc}(\b{f}, \O)^2$\\
 \end{itemize}
 ~~~~~~~~~~~~~~~~~~~~~~~~~~~~~~~~~~~~~~~~~~~~~~~~~~~~~~~~~~~~~~~~~~~~~~~~~~~~~$+{Osc}(\b{g}, \Gamma_N)^2,$ \\
 \begin{itemize}
 \item  \textbf{Bound for $\eta_5$}: $  \underset{e\in \mathcal{E}^0_h}{\sum}\dfrac{1}{h_e} \|\sjump{\b{u_h}}\|^2_{\b{L^2}(e)} \lesssim \norm{\b{u-u_h}}^2$.
\end{itemize} 

\end{theorem}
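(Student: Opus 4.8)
The plan is to prove all five local bounds by the classical residual–efficiency technique based on interior and edge bubble functions. Throughout, for each $K\in\mathcal{T}_h$ let $\b{\bar{f}}\in[P_1(K)]^2$ be the best $\b{L^2}(K)$-approximation of $\b{f}$ and, on Neumann edges, let $\b{\bar{g}}\in[P_1(e)]^2$ be the best $\b{L^2}(e)$-approximation of $\b{g}$; the differences $\b{f}-\b{\bar{f}}$ and $\b{g}-\b{\bar{g}}$ are precisely what gets absorbed into ${Osc}(\b{f},\O)$ and ${Osc}(\b{g},\Gamma_N)$. The estimator $\eta_5$ requires no bubbles: since $\b{u}\in\b{{V}}\subset\b{H^1}(\O)$ is single-valued across interior edges and vanishes on $\Gamma_D$, we have $\sjump{\b{u}}=\b{0}$ on every $e\in\mathcal{E}^0_h$, hence $\sjump{\b{u_h}}=\sjump{\b{u_h}-\b{u}}$ and
$$\sum_{e\in\mathcal{E}^0_h}\frac{1}{h_e}\|\sjump{\b{u_h}}\|^2_{\b{L^2}(e)}=|\b{u}-\b{u_h}|^2_{\b{*}}\leq\norm{\b{u-u_h}}^2,$$
which is the asserted bound directly from the definition of the norm.

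For $\eta_1$ I would introduce the interior bubble $b_K$ vanishing on $\partial K$, set $\b{R}_K:=\b{\bar{f}}+\b{div}\,\b{\sigma}_h(\b{u_h})$, and test with $\b{w}_K:=\b{R}_K\,b_K$. The bubble norm equivalences give $\|\b{R}_K\|^2_{\b{L^2}(K)}\lesssim\int_K\b{R}_K\cdot\b{w}_K$. Integrating $\b{div}\,\b{\sigma}_h(\b{u_h})$ by parts (no boundary contribution, since $\b{w}_K=\b{0}$ on $\partial K$) and inserting the strong equation \eqref{1.4} tested against $\b{w}_K$ yields
$$\|\b{R}_K\|^2_{\b{L^2}(K)}\lesssim\int_K(\b{\bar{f}}-\b{f})\cdot\b{w}_K+\int_K\b{\sigma}_h(\b{u}-\b{u_h}):\b{\varepsilon}(\b{w}_K).$$
The inverse estimate $|\b{w}_K|_{\b{H^1}(K)}\lesssim h_K^{-1}\|\b{R}_K\|_{\b{L^2}(K)}$ then gives $h_K\|\b{R}_K\|_{\b{L^2}(K)}\lesssim|\b{u}-\b{u_h}|_{\b{H^1}(K)}+h_K\|\b{f}-\b{\bar{f}}\|_{\b{L^2}(K)}$; squaring, summing over $K$, and restoring $\b{f}$ in place of $\b{\bar{f}}$ produces the stated estimate.

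For $\eta_2$ and $\eta_3$ I would use the edge bubble $b_e$ supported on the patch $\omega_e$ of triangles sharing $e$. On an interior edge, testing with the product of $b_e$ and a polynomial extension of $\sjump{\b{\sigma}_h(\b{u_h})}$ into $\omega_e$ gives $\|\sjump{\b{\sigma}_h(\b{u_h})}\|^2_{\b{L^2}(e)}\lesssim\int_e\sjump{\b{\sigma}_h(\b{u_h})}\cdot\b{w}_e$; elementwise integration by parts, \eqref{1.4}, and the element-residual control just obtained reduce this to $h_e\|\sjump{\b{\sigma}_h(\b{u_h})}\|^2_{\b{L^2}(e)}\lesssim|\b{u}-\b{u_h}|^2_{\b{H^1}(\omega_e)}+\sum_{K\subset\omega_e}h_K^2\|\b{R}_K\|^2_{\b{L^2}(K)}$, whence the $\b{f}$-oscillation. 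The Neumann bound $\eta_3$ is identical, carried on a single boundary triangle and using $\b{\sigma}(\b{u})\b{n}=\b{g}$ from \eqref{1.6}, which additionally produces ${Osc}(\b{g},\Gamma_N)$.

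The bound for $\eta_4$ is the genuine obstacle, and is where I expect the real work to lie. On $\Gamma_C$ the multiplier $\b{\lambda}$ is \emph{a priori} only a functional in $\b{H^{-\frac{1}{2}}}(\Gamma_C)$, so before any bubble argument one must identify it with the physical stress: integrating \eqref{lagrange1} by parts and using \eqref{1.4}--\eqref{1.6} shows (componentwise, under Assumption \textbf{(A)}) that $\lambda^1=-\sigma_n(\b{u})$ on $\Gamma_C$, so that $\b{\sigma}_h(\b{u_h})\b{n}+\b{\lambda_h}$ genuinely represents $(\b{\sigma}_h(\b{u_h})-\b{\sigma}(\b{u}))\b{n}+(\b{\lambda_h}-\b{\lambda})$. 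Testing a contact-edge bubble against this residual and reusing the element-residual control then bounds $h_e\|\b{\sigma}_h(\b{u_h})\b{n}+\b{\lambda_h}\|^2_{\b{L^2}(e)}$ by $|\b{u}-\b{u_h}|^2_{\b{H^1}}$, the oscillation terms, and the multiplier error. The delicate point is converting the weak $\b{H^{-\frac{1}{2}}}(\Gamma_C)$ measure of $\b{\lambda}-\b{\lambda_h}$ into the local $\b{L^2}(e)$ quantities: this hinges on inserting a suitable projection of $\b{\lambda}$ and invoking the mesh-dependent norm equivalence $\big(\sum_{e}h_e\|\b{\mu_h}\|^2_{\b{L^2}(e)}\big)^{1/2}\lesssim\|\b{\mu_h}\|_{\b{H^{-\frac{1}{2}}}(\Gamma_C)}$ on the finite-dimensional space $\b{M_h}(\Gamma_C)$, together with Assumption \textbf{(B)} guaranteeing exactly one contact edge per element of $\mathcal{T}^C_h$. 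Establishing the identification of $\b{\lambda}$ with $-\b{\sigma}(\b{u})\b{n}$ and controlling the negative-order norm is the crux distinguishing $\eta_4$ from the purely interior residuals.
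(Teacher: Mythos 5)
Your proposal is correct in outline and takes essentially the same route as the paper: the paper in fact omits this proof entirely, deferring to the standard interior/edge bubble-function efficiency arguments of \cite{TG:2016:VIDG1, KP:2022:QuadSignorini}, which is precisely what you reconstruct (the direct $|\cdot|_{\b{*}}$-norm identity for $\eta_5$, interior bubbles for $\eta_1$, edge bubbles with elementwise integration by parts for $\eta_2$, $\eta_3$, and the identification $\b{\lambda}=-\b{\sigma}(\b{u})\b{n}$ on $\Gamma_C$ plus $\b{H^{-\frac{1}{2}}}$--$\b{H^{\frac{1}{2}}}$ duality for $\eta_4$). You correctly isolate the only delicate point, namely localizing the negative-order norm of $\b{\lambda}-\b{\lambda_h}$ in the $\eta_4$ bound, which is exactly where the cited references do the corresponding work.
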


\section{\textbf{Numerical Experiments}}
\noindent
The goal of this section is to examine two contact model problems in 2D that substantiate the theoretical results.  The implementation have been carried out  in MATLAB 2020B.  The numerical results are reported for two DG schemes namely SIPG and NIPG wherein the penalty parameter $\eta$ is chosen to be 70 and 70$\nu$, respectively for both problems.  We use primal dual active set strategy \cite{Wohlmuth:2005:PrimalDual} in order to compute discrete solution $\b{u_h}$.
\vspace{0.5 cm}
\par
\noindent
\underline{\textbf{Model Problem 1:}} \textit{(Contact with a rigid foundation)}
\vspace{0.2 cm}
\par
\noindent
In this model problem,  we simulate the deformation of linear elastic unit square represented by $\O =(0,1) \times (0,1)$ which comes in  contact with a rigid foundation at the bottom of unit square in the region $(0,1) \times \{0\}$.  Moreover,  the top of unit elastic square is clamped i.e. the Dirichlet boundary condition is imposed on $(0,1) \times\{1\}$.  The Neumann forces are acting on left and right side of the unit square which can be computed using the exact solution $\b{u}= (u_1, u_2) = (y^2(y-1), ~(x-2)y(1-y)exp(y))$.  We set Lam$\acute{e}$'s parameter $\lambda=\mu=1$ for this problem.
\par
\noindent

\vspace{0.2 cm}

\begin{table}
\begin{tabular}{ |c|c|c| } 
 \hline
 ${\b{h}}$ & \textbf{error} & \textbf{order of conv. }\\
 \hline
 $2^{-1}$ &  3.2583 $\times$ $10^{-1}$~~ & -\\ 
 $2^{-2}$ & 8.8548$\times$ $10^{-2}$ ~~ & 1.8795\\ 
 $2^{-3}$ &  2.2846 $\times$ $10^{-2}$~~ &  1.9545 \\ 
 $2^{-4}$ &5.7886$\times$ $10^{-3}$ ~~ & 1.9806\\
 $2^{-5}$ & 1.4560$\times$ $10^{-3}$~~ & 1.9911 \\ 
 \hline
\end{tabular}
\quad
~~~
\begin{tabular}{ |c|c|c| } 
 \hline
 $\b{h}$ & \textbf{error} & \textbf{order of conv.}\\
 \hline
 $2^{-1}$ &  3.1989 $\times$ $10^{-1}$~~ & -\\ 
 $2^{-2}$ & 8.7572 $\times$ $10^{-2}$ ~~ & 1.8690\\ 
 $2^{-3}$ & 2.2658 $\times$ $10^{-2}$~~ &  1.9504 \\ 
 $2^{-4}$ & 5.7474$\times$ $10^{-3}$ ~~ & 1.9790\\
 $2^{-5}$ & 1.4463 $\times$ $10^{-3}$~~ & 1.9904 \\ 
 \hline
\end{tabular}
\bigbreak
\caption{Errors and orders of
convergence for SIPG and NIPG methods on uniform mesh for Model Problem 1.}
\label{table1}
\end{table}

\begin{figure}
	\begin{subfigure}[b]{0.45\textwidth}
		\includegraphics[width=\linewidth]
		{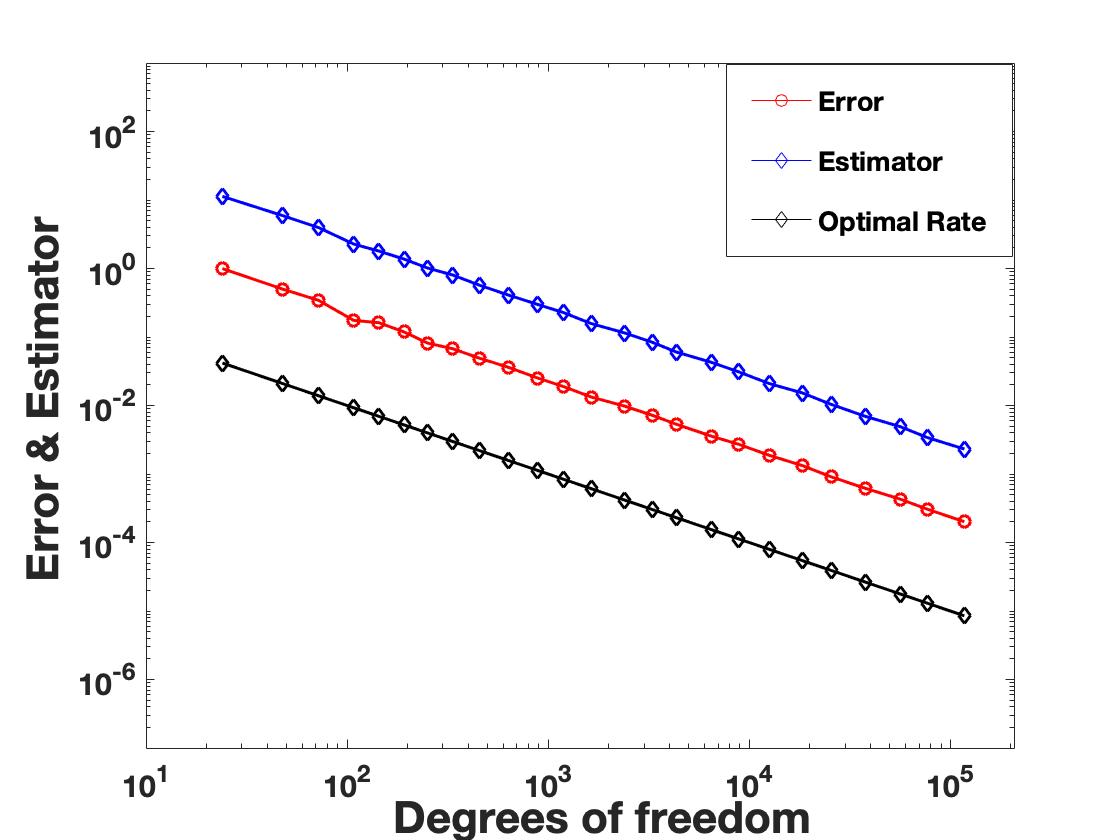}
		\caption{SIPG}
	\end{subfigure}
	\begin{subfigure}[b]{0.45\textwidth}
		\includegraphics[width=\linewidth]{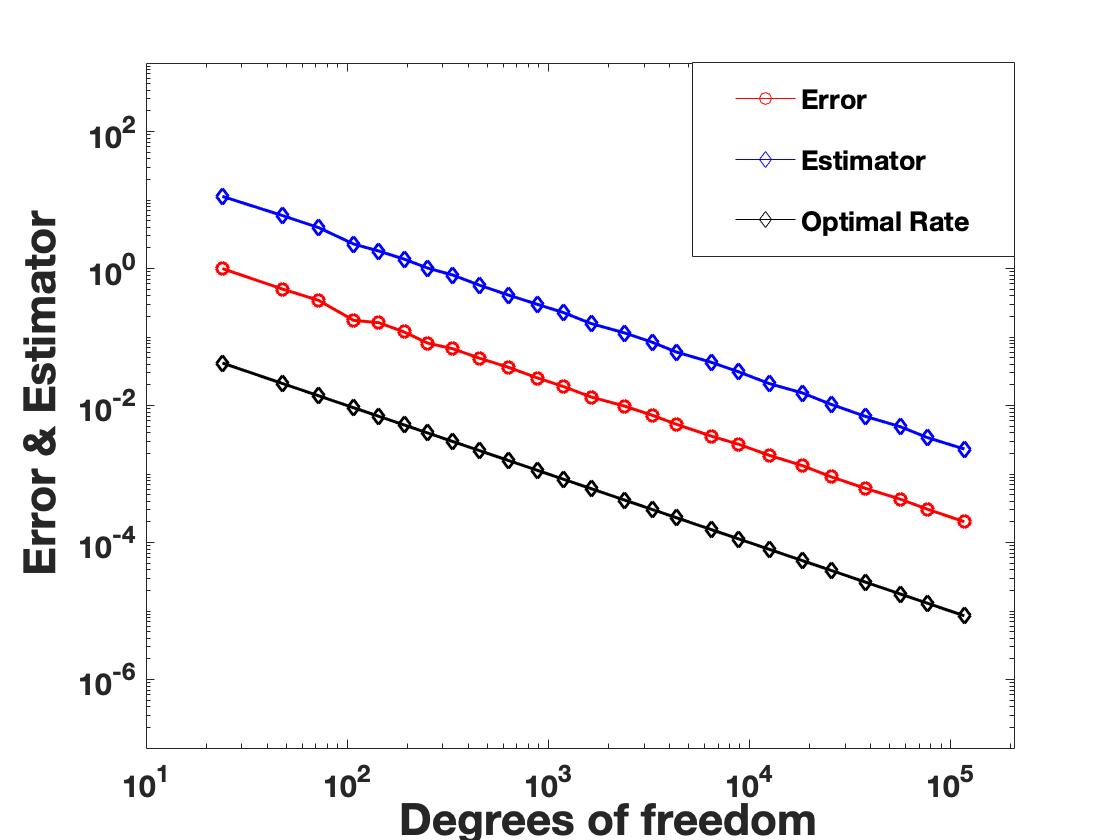}
		\caption{NIPG}
	\end{subfigure}
	\caption{Convergence of error and estimator for SIPG and NIPG method for Model Problem 1.}\label{Estmiator1}
\end{figure}

\begin{figure}
	\begin{subfigure}[b]{0.45\textwidth}
		\includegraphics[width=\linewidth]
		{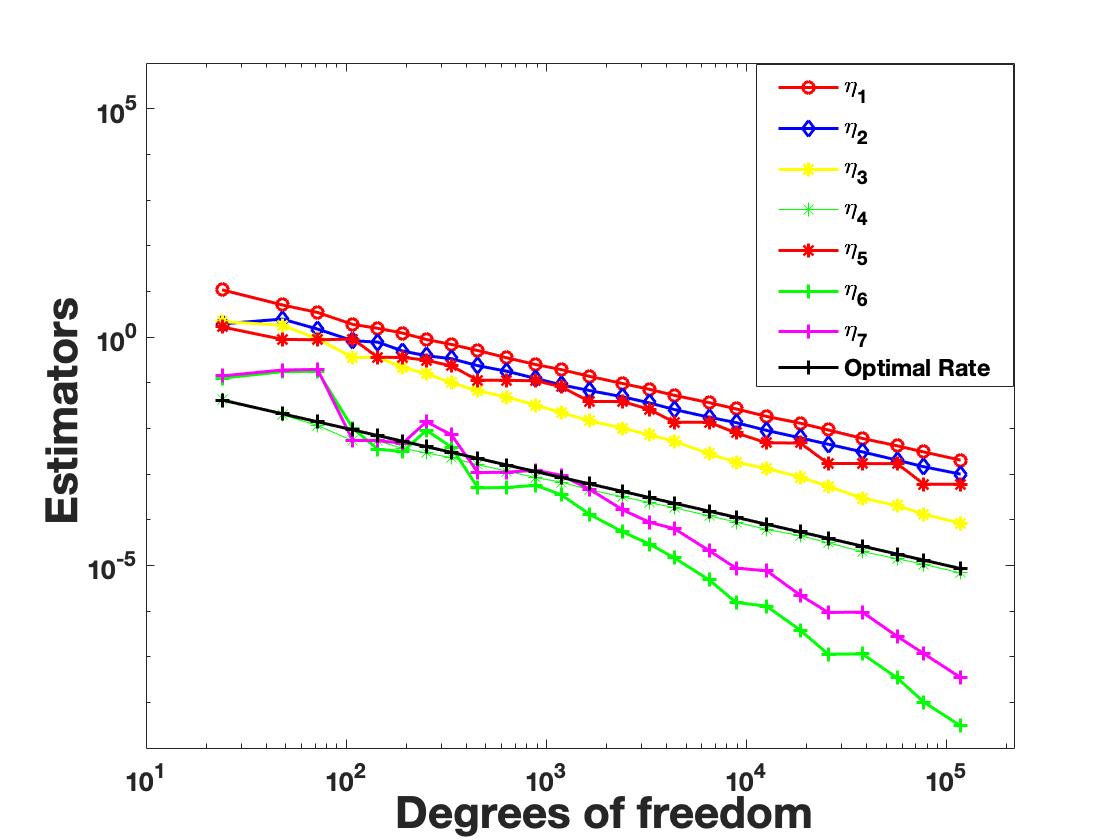}
		\caption{SIPG}
	\end{subfigure}
	\begin{subfigure}[b]{0.45\textwidth}
		\includegraphics[width=\linewidth]{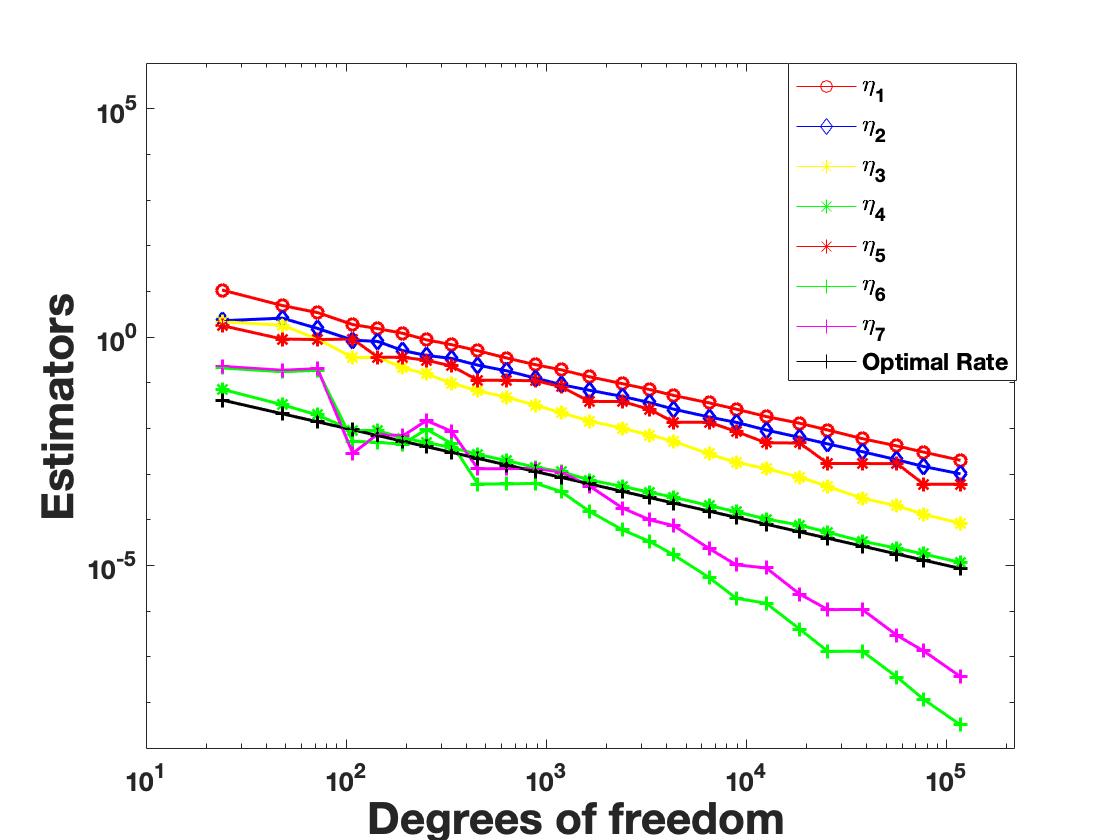}
		\caption{NIPG}
	\end{subfigure}
	\caption{Error estimator contributions for SIPG and NIPG method for Model Problem 1.}\label{Estimator11}
\end{figure} 

\begin{figure}
	\begin{subfigure}[b]{0.45\textwidth}
		\includegraphics[width=\linewidth]
		{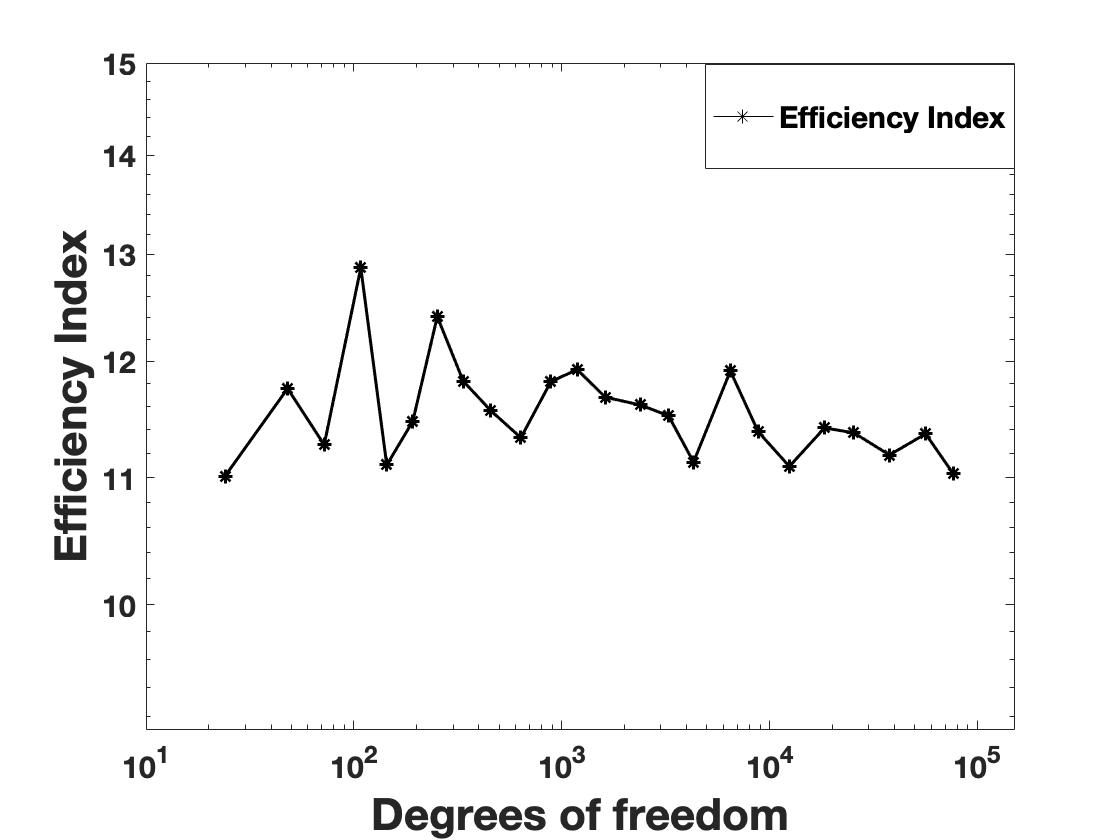}
		\caption{SIPG}
	\end{subfigure}
	\begin{subfigure}[b]{0.45\textwidth}
		\includegraphics[width=\linewidth]{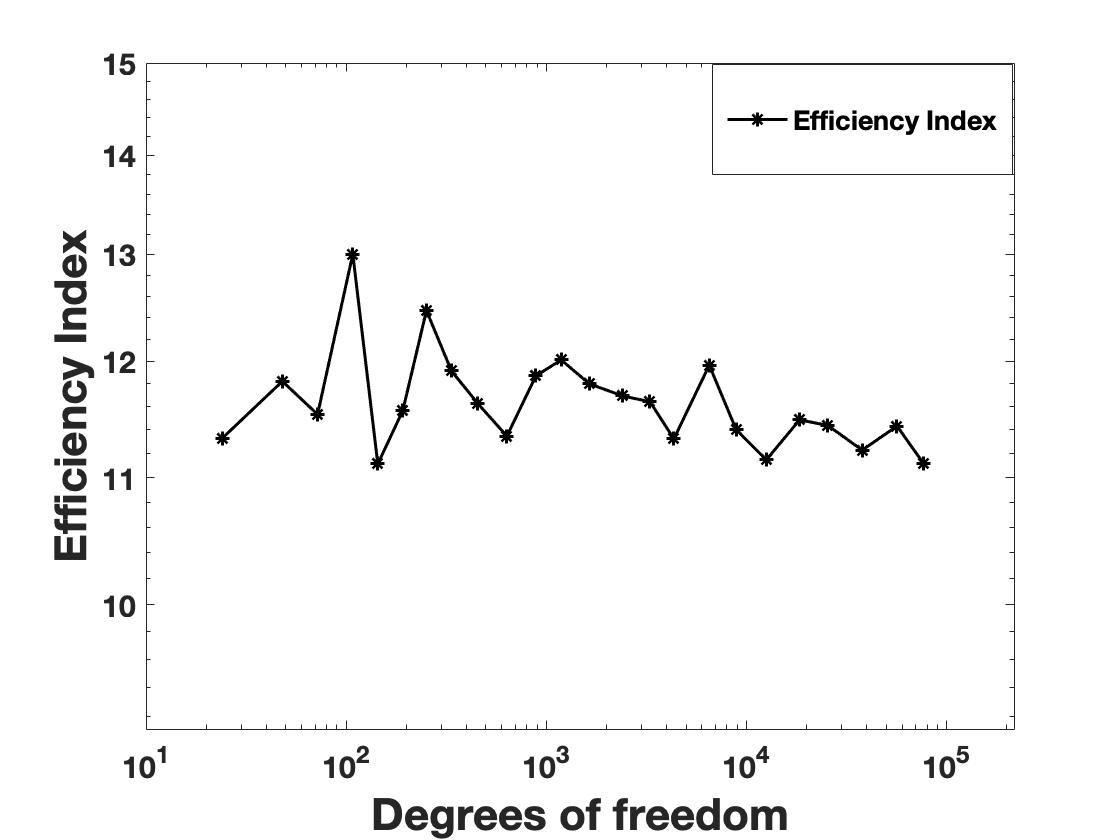}
		\caption{NIPG}
	\end{subfigure}
	\caption{Plot of efficiency index for SIPG and NIPG method for Model Problem 1.}\label{Efficiency Index}
\end{figure} 

\noindent
First,  we  conduct the numerical test on uniform refined meshes for SIPG and NIPG methods.  The smoothness of the exact solution ensures the optimal order of convergence(1/NDof,   NDof= Degrees of freedom). Table \ref{table1} demonstrates the convergence history of energy norm error for SIPG and NIPG methods,  respectively on uniform refinement.  Next,  we  conduct the test on adaptive mesh which operates in the following way:

\begin{itemize}
\item \textbf{SOLVE}: In this step we compute the solution $\b{u_h}$ of discrete variational inequality \eqref{discrete}.
\item \textbf{ESTIMATE}: The error estimator $\eta_h$  is computed elementwise in this step.
\item \textbf{MARK}: We use D\"orlfer marking strategy \cite{Dorfler:1996:Afem} with the parameter $\theta =0.4$ to mark the triangles.
\item \textbf{REFINE}: Finally, in this step,  we refine the marked triangles using newest vertex bisection algorithm \cite{Verfurth:1995:AdaptiveBook} and obtain a finer mesh.
\end{itemize}
\vspace{0.5 cm}
In Figure \ref{Estmiator1},  we plot the DG norm error and residual estimator $\eta_h$ on the adaptive mesh.  Therein,  we observe that the error and estimator are converging with the optimal rate with the increase in degrees of freedom,   thus ensuring the reliability of the error estimators.  Figure \ref{Estimator11} illustrates the convergence of individual error estimators on the adaptive mesh.  The efficiency index which is calculated as ratio of estimator and error is depicted in Figure \ref{Efficiency Index}.  We can see that the graph of efficiency index is both bounded above and below by generic constant which ensures that the residual estimator $\eta_h$ is both reliable and efficient.
\vspace{0.5 cm}
\par
\noindent
\begin{figure}
	\begin{subfigure}[b]{0.45\textwidth}
		\includegraphics[width=\linewidth]
		{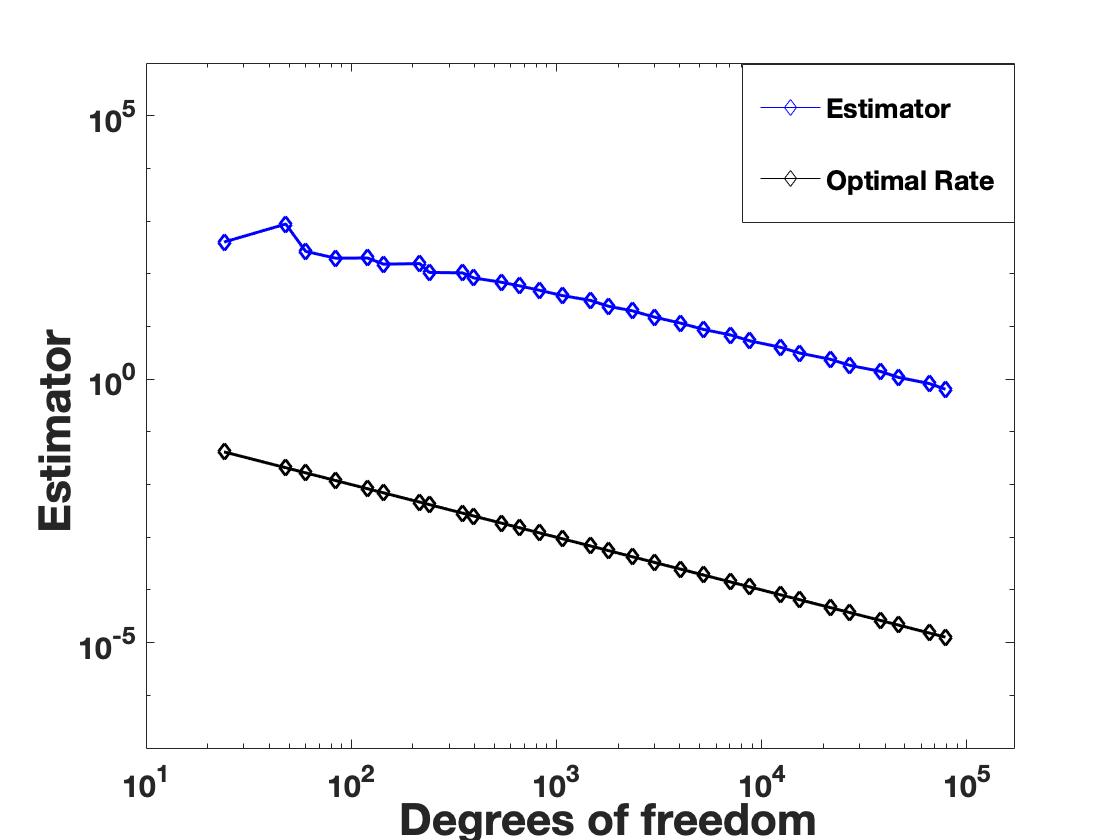}
		\caption{SIPG}
	\end{subfigure}
	\begin{subfigure}[b]{0.45\textwidth}
		\includegraphics[width=\linewidth]{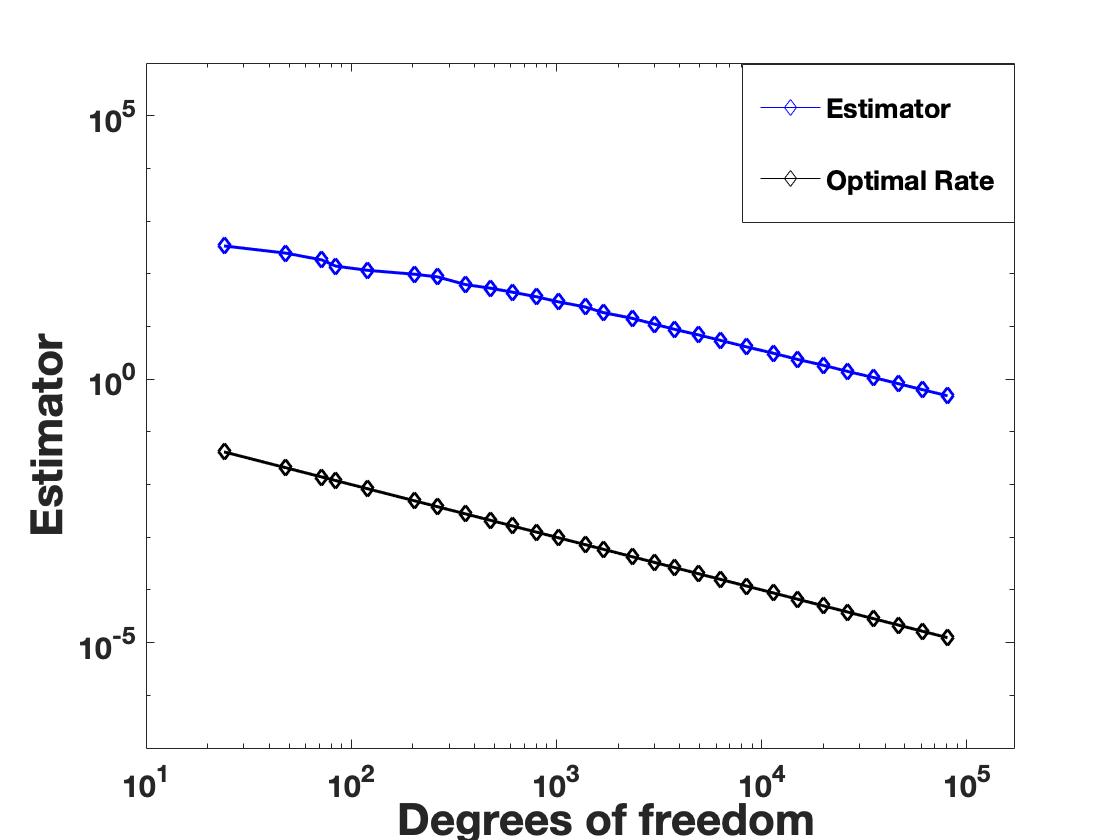}
		\caption{NIPG}
	\end{subfigure}
	\caption{Convergence of estimator for SIPG and NIPG method for Model Problem 2.}\label{Adpative mesh1}
\end{figure} 
\begin{figure}
	\begin{subfigure}[b]{0.45\textwidth}
		\includegraphics[width=\linewidth]
		{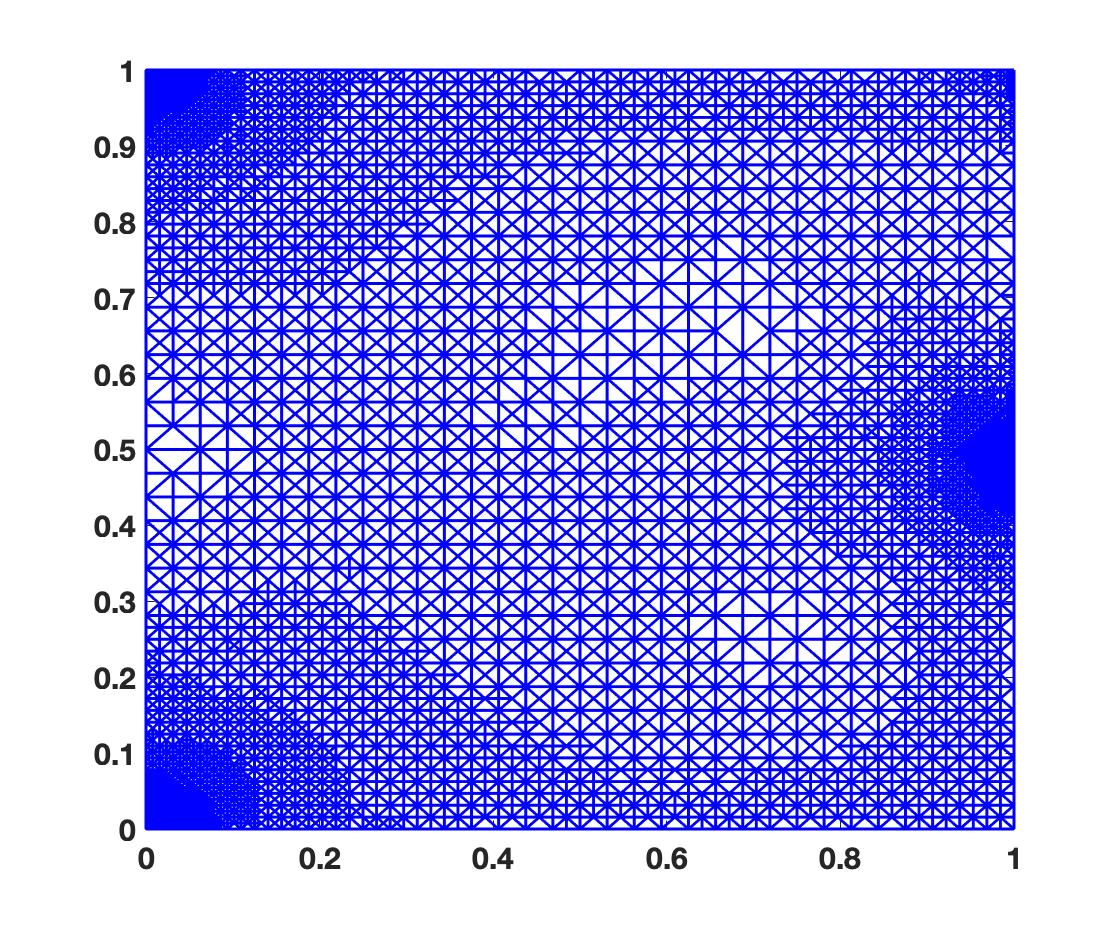}
		\caption{SIPG}
	\end{subfigure}
	\begin{subfigure}[b]{0.45\textwidth}
		\includegraphics[width=\linewidth]{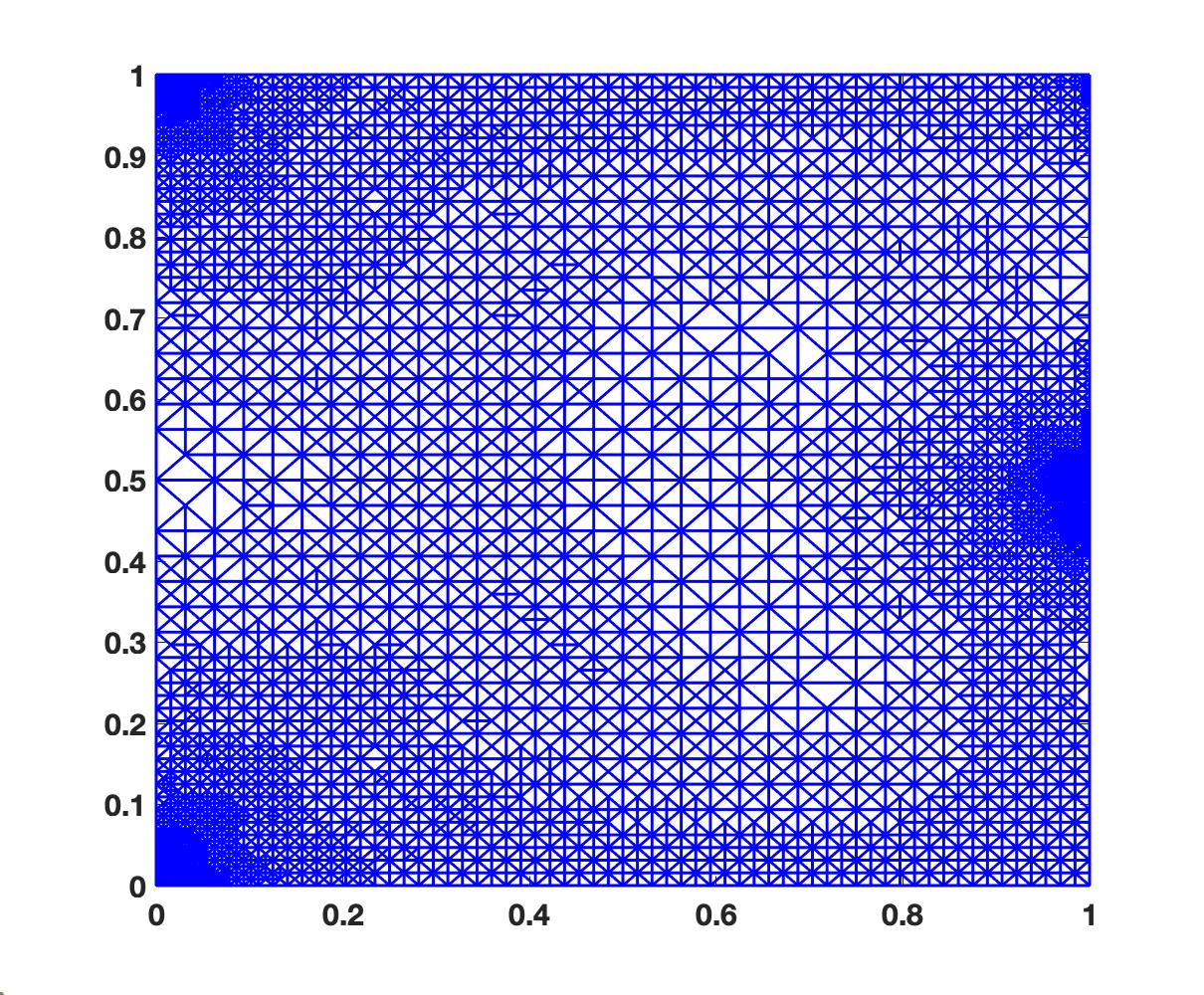}
		\caption{NIPG}
	\end{subfigure}
	\caption{Adaptive mesh refinement for SIPG and NIPG method for Model Problem 2.}\label{Adpative mesh}
\end{figure} 

\noindent
\underline{\textbf{Model Problem 2:}} \textit{(Contact with a rigid wedge)}
\vspace{0.2 cm}
\par
\noindent
In the next model problem,  the domain $\Omega =(0,1)\times(0,1)$ is the cross section of the linear elastic square which is  pushed towards a non-zero obstacle $h(y) = -0.2+\rvert 0.5-y \lvert $ in the direction of potential contact boundary $ \{1\} \times (0,1)$ .  The Neumann boundary $\big((0,1) \times \{0\} \big) \cup \big((0,1) \times\{1\}\big)$ is traction free i.e.  $\b{g}~=~0$.  Next,  we impose non-homogeneous Dirichlet boundary condition $\b{u}=(-0.1, 0 )$ on $ \{0\} \times (0,1)$. The load vector $\b{f}$ is assumed to be zero.  The Lam$\acute{e}$ parameters $\mu$ and $\lambda$ are computed by
\begin{align*}
\mu = \frac{E}{2(1+\nu)}~~ \text{and}~~\lambda = \frac{E\nu}{(1+\nu)(1-2\nu)},
\end{align*}
where,  the Young’s modulus $E$ and the Poisson ratio $\nu$ are 500 and 0.3,  respectively for the given model problem. 
\vspace{0.3 cm}
\par

In contrast to Model Problem 1,  we do not have exact solution for this problem,  therefore we only plot the convergence of residual error estimator versus the degrees of freedom.  The behaviour of residual error estimator for SIPG and NIPG methods is described in Figure \ref{Adpative mesh1}.  Therein,  the figure depicts the optimal order of convergence of the estimators.  Figure \ref{Adpative mesh}  illustrates the adaptively refined grid steered by the residual error estimator for both the DG methods.  As expected,  the mesh is strongly refined near the  area where the tip of wedge is coming in the contact with the unit square,  free boundary around the contact zone and the intersection of the Neumann and Dirichlet boundaries.  Thus,  the deformation of the body under the effect of traction is well captured.  The decay of individual error estimator $\eta_i,  1 \leq i \leq 7$ is plotted in Figure \ref{IndEstR1}.  It is evident that each estimator contribution is converging with the optimal rate.

\begin{figure}
	\begin{subfigure}[b]{0.45\textwidth}
		\includegraphics[width=\linewidth]
		{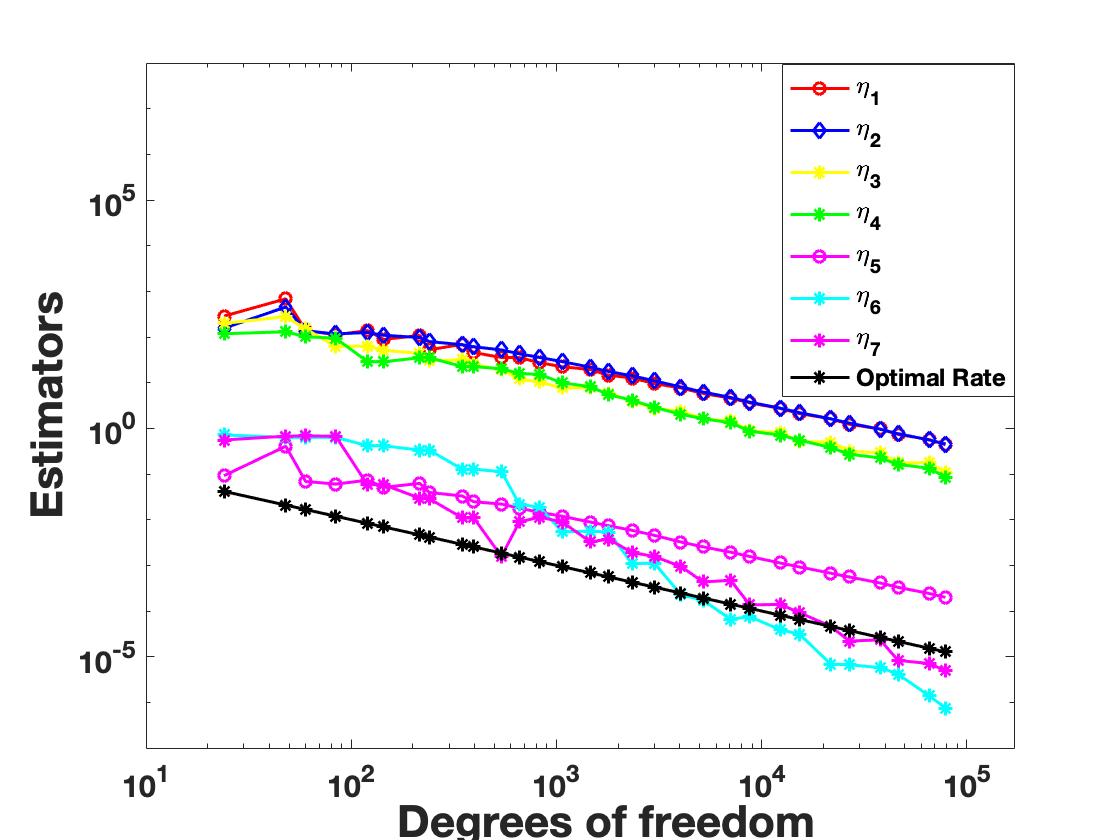}
		\caption{SIPG}
	\end{subfigure}
	\begin{subfigure}[b]{0.45\textwidth}
		\includegraphics[width=\linewidth]{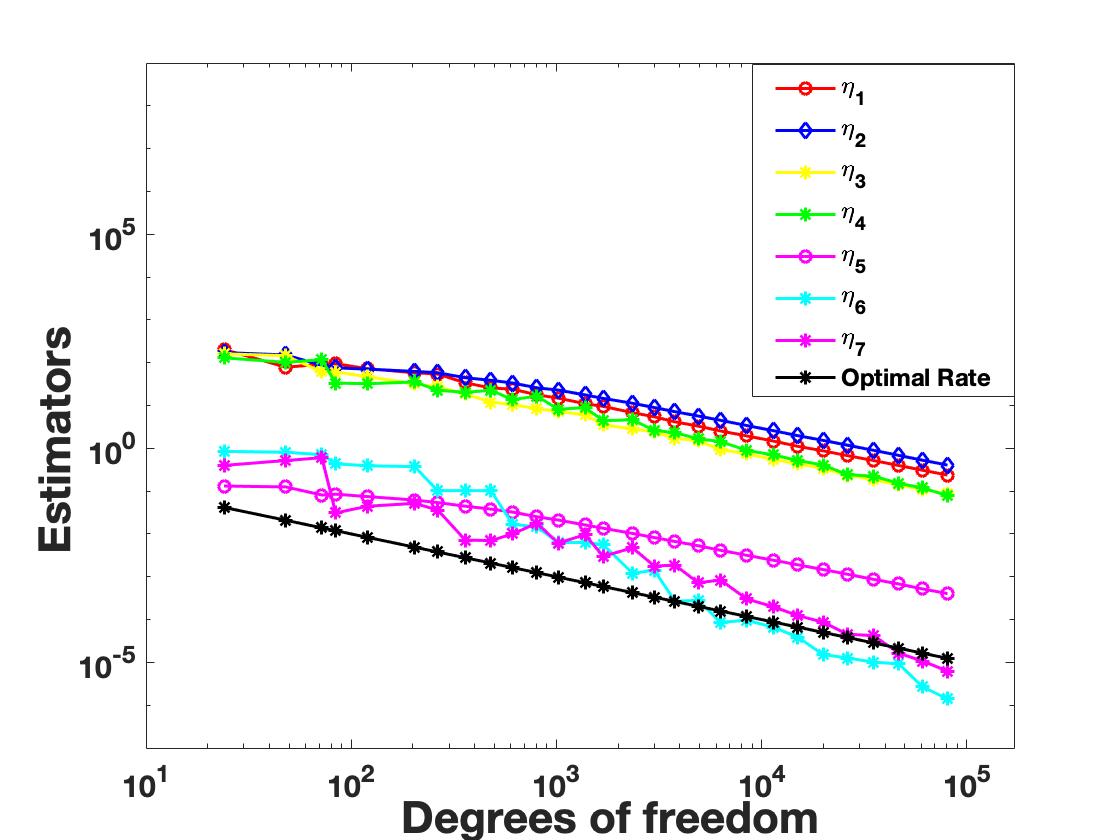}
		\caption{NIPG}
	\end{subfigure}
	\caption{Residual error estimators contribution for SIPG and NIPG method for Model Problem 2.}\label{IndEstR1}
\end{figure} 

\end{document}